\documentclass[11pt]{amsart}
\usepackage{mathrsfs}
\usepackage{amsfonts,amssymb,amscd,amsthm,amsmath,color}
\usepackage[alphabetic]{amsrefs}
\usepackage[all]{xy}
\makeatletter
\def\makeop#1{\expandafter\def\csname#1\endcsname{\mathop{\rm #1}\nolimits}\ignorespaces}
\def\makeoplist#1 {\def\@@tmpa{#1}\def\@@tmpb{***}%
  \ifx\@@tmpa\@@tmpb\else\makeop{#1}\expandafter\makeoplist\fi}
\makeoplist
ad sc gl Ad Alb Alt Aut Br can Card card Char Cl Coker coker Cond cond Corr
depth diag Diff Disc disc Div div dom End Ext Fil Fr Frac Frob Gal GL
Gr gr GSp GSpin Hom hom Id id Im im Ind ind Int inv Irr irr Isom
Ker ker len length Lie Nr Nrd O Ob ob ord PGL Pic Pr pr Proj PSL rank
RD Res Sgn sgn SL SO Sp Sp sp Spec Spf Spin
St st Stab stab Std std SU Succ Supp supp Swan Sym Tor tor tors
torsion Tr tr Trd U val Vol vol ***
\def\makermlist#1 {\def\@@tmpa{#1}\def\@@tmpb{***}\ifx\@@tmpa\@@tmpb
  \else\expandafter\def\csname#1\endcsname{{\rm#1}}\expandafter\makermlist\fi}
\makermlist ab alg arith can geom opp op red sep sh ***
\makeatother
\DeclareMathAlphabet\eusm{U}{eus}{m}{n}
\def\makebb#1{\expandafter\def\csname bb#1\endcsname{{\mathbb{#1}}}\ignorespaces}
\def\makerm#1{\expandafter\def\csname rm#1\endcsname{{\rm #1}}\ignorespaces}
\def\makebf#1{\expandafter\def\csname bf#1\endcsname{{\bf #1}}\ignorespaces}
\def\makegr#1{\expandafter\def\csname gr#1\endcsname{{\mathfrak{#1}}}\ignorespaces}
\def\makescr#1{\expandafter\def\csname scr#1\endcsname{{\mathscr{#1}}}\ignorespaces}
\def\makecal#1{\expandafter\def\csname cal#1\endcsname{{\cal #1}}\ignorespaces}
\def\makeudl#1{\expandafter\def\csname udl#1\endcsname{{\underline{#1}}}\ignorespaces}
\def\doLetters#1{%
  #1A #1B #1C #1D #1E #1F #1G #1H #1I #1J #1K #1L #1M
  #1N #1O #1P #1Q #1R #1S #1T #1U #1V #1W #1X #1Y #1Z}
\def\doletters#1{%
  #1a #1b #1c #1d #1e #1f #1g #1h #1i #1j #1k #1l #1m
  #1n #1o #1p #1q #1r #1s #1t #1u #1v #1w #1x #1y #1z}
\doLetters\makebb\doLetters\makecal\doLetters\makerm\doLetters\makebf\doLetters\makescr
\doletters\makerm \doletters\makebf \doLetters\makegr \doletters\makegr \doLetters\makeudl \doletters\makeudl
    \def\setminus{\smallsetminus}
   
\def\ringO{{\scrO}}

\makeatletter   \newdimen\mina@@\mina@@=18pt
\newcommand{\xrtarw}[2][]{\mathrel{\mathop{\,\setbox\z@\vbox{\m@th
  \hbox{$\scriptstyle\;{#1}\;\;$}\hbox{$\m@th\scriptstyle\;{#2}\;\;$}}%
  \hbox to\ifdim\wd\z@>\mina@@\wd\z@\else\mina@@\fi{\rightarrowfill@
  \displaystyle}\,}\limits^{#2}\@ifnotempty{#1}{_{#1}}}}
\newcommand{\xltarw}[2][]{\mathrel{\mathop{\,\setbox\z@\vbox{\m@th
  \hbox{$\scriptstyle\;\;{#1}\;$}\hbox{$\m@th\scriptstyle\;\;{#2}\;\;$}}%
  \hbox to\ifdim\wd\z@>\mina@@\wd\z@\else\mina@@\fi{\leftarrowfill@
  \displaystyle}\,}\limits^{#2}\@ifnotempty{#1}{_{#1}}}}
\makeatother
\def\XYmatrix{\xymatrix@M=5pt} 
\def\ncmd{\newcommand}
\ncmd{\xysubset}[1][r]{\ar@<-2.5pt>@{^(-}[#1]\ar@<2.5pt>@{_(-}[#1]}
\ncmd{\XYmatrixc}[1]{\vcenter{\XYmatrix{#1}}}
\ncmd{\xyto}[1][r]{\ar@{->}[#1]}      \ncmd{\xyinj}[1][r]{\ar@{^(->}[#1]}
\ncmd{\xysurj}[1][r]{\ar@{->>}[#1]}   \ncmd{\xyline}[1][r]{\ar@{-}[#1]}
\ncmd{\xydotsto}[1][r]{\ar@{.>}[#1]}  \ncmd{\xydots}[1][r]{\ar@{.}[#1]}
\ncmd{\xyleadsto}[1][r]{\ar@{~>}[#1]} \ncmd{\xyeq}[1][r]{\ar@{=}[#1]}
\ncmd{\xyequal}[1][r]{\ar@{=}[#1]}    \ncmd{\xyequals}[1][r]{\ar@{=}[#1]}
\ncmd{\xymapsto}[1][r]{\ar@{|->}[#1]}\ncmd{\xyimplies}[1][r]{\ar@{=>}[#1]}
\ncmd{\xytofrom}[1][r]{\ar@{<->}[#1]} 
  
\def\XYTOTO[#1]^#2_#3{\xyto[#1]<0.5ex>^{#2}\xyto[#1]<-0.5ex>_{#3}}
\ncmd{\xytoto}[1][r]{\XYTOTO[#1]}
\def\beginmat{\begin{pmatrix}}\def\endmat{\end{pmatrix}}
\def\pmat#1]{{\def\beginmat{\begin{pmatrix}}\def\endmat{\end{pmatrix}}\mat#1]}}
\def\bmat#1]{{\def\beginmat{\begin{bmatrix}}\def\endmat{\end{bmatrix}}\mat#1]}}
\def\Bmat#1]{{\def\beginmat{\begin{Bmatrix}}\def\endmat{\end{Bmatrix}}\mat#1]}}
\def\vmat#1]{{\def\beginmat{\begin{vmatrix}}\def\endmat{\end{vmatrix}}\mat#1]}}
\def\Vmat#1]{{\def\beginmat{\begin{Vmatrix}}\def\endmat{\end{Vmatrix}}\mat#1]}}
\def\smat#1]{{\def\beginmat{\begin{smallmatrix}}%
  \def\endmat{\end{smallmatrix}}\left(\mat#1]\right)}}
\def\mat#1#2]{\ifcase#1\or \matA#2]\or \matAA#2]\or \matAAA#2]\fi}
\def\matA  #1#2]{\ifcase#1\or \matAB  #2]\or \matABB  #2]\or \matABBB  #2]\fi}
\def\matAA #1#2]{\ifcase#1\or \matAAB #2]\or \matAABB #2]\or \matAABBB #2]\fi}
\def\matAAA#1#2]{\ifcase#1\or \matAAAB#2]\or \matAAABB#2]\or \matAAABBB#2]\fi}
\def\matAB[#1]{\beginmat#1\endmat}
\def\matABB[#1,#2]{\beginmat#1&#2\endmat}
\def\matABBB[#1,#2,#3]{\beginmat#1&#2&#3\endmat}
\def\matAAB[#1;#2]{\beginmat#1\\#2\endmat}
\def\matAABB[#1,#2;#3,#4]{\beginmat#1&#2\\#3&#4\endmat}
\def\matAABBB[#1,#2,#3;#4,#5,#6]{\beginmat
   #1&#2&#3\\#4&#5&#6\endmat}
\def\matAAAB[#1;#2;#3]{\beginmat#1\\#2\\#3\endmat}
\def\matAAABB[#1,#2;#3,#4;#5,#6]{\beginmat
   #1&#2\\#3&#4\\#5&#6\endmat}
\def\matAAABBB[#1,#2,#3;#4,#5,#6;#7,#8,#9]{\beginmat
   #1&#2&#3\\#4&#5&#6\\#7&#8&#9\endmat}
\def\beginalignorgather#1#2\endalignorgather{%
  \ifx#1!\beginaorgnostar#2\endaorgnostar\else\beginaorgstar#1#2\endaorgstar\fi}
\def\beginaorgstar#1#2\endaorgstar{%
  \ifx#1@\begin{align*}#2\end{align*}\else\begin{gather*}#1#2\end{gather*}\fi}
\def\beginaorgnostar#1#2\endaorgnostar{%
  \ifx#1@\begin{align}#2\end{align}\else\begin{gather}#1#2\end{gather}\fi}
\def\[#1\]{\beginalignorgather#1\endalignorgather}
\def\simto{\xrtarw{\sim}}

\def\dbltag#1#2{\tag*{\hbox to 0pt{\hbox to \hsize
  {\hfil#2}\hss}#1}}

\newcommand{\lowsim}{\smash{\hbox{\lower2.5pt
  \hbox{\(\scriptstyle\sim\)}}}}
\def\leq{\leqslant}
\def\geq{\geqslant}
\def\ge{\geq}
\def\le{\leq}

\newtheorem{theorem}{Theorem}[section]
\newtheorem{lemma}[theorem]{Lemma}
\newtheorem{corollary}[theorem]{Corollary}
\newtheorem{proposition}[theorem]{Proposition}

\newtheorem*{lemma*}{Lemma}
\newtheorem*{theorem*}{Main Theorem}

\theoremstyle{definition}

\newtheorem{example}[subsection]{Example}
\theoremstyle{remark}
\newtheorem*{remark}{Remark}
\numberwithin{equation}{section}
\def\ad{{\rm ad}}
\def\sc{{\rm sc}}
\def\gl{{\rm gl}}
\def\st{{\rm st}}

\usepackage{comment}
\usepackage{hyperref}
\hypersetup{
  hidelinks,
  pdftitle={Zeta functions of PGLn over non-Archimedean local fields},
  pdfauthor={Ming-Hsuan Kang and Jiu-Kang Yu}
}
\begin{document}
\title[zeta functions]{Zeta functions of PGL$_n$ over non-Archimedean local fields}
\author{Ming-Hsuan Kang}
\address{National Yang Ming Chiao Tung University, Department of Mathematics,
Hsinchu, Taiwan 300} 
\email{mhkang@math.nctu.edu.tw}
\author{Jiu-Kang Yu}
\address{The Institute of Mathematical Sciences, The Chinese University
  of Hong Kong\\Shatin, New Territories, Hong Kong}
\email{jkyu@ims.cuhk.edu.hk}
\keywords{Bruhat--Tits buildings, zeta functions, unramified \(L\)-functions,
Ihara zeta functions, \(\mathrm{PGL}_n\), central division algebras}
\subjclass{11F70, 22E35, 20E42}
\begin{abstract}
Let \(\scrB\) be the Bruhat--Tits building of
\(\mathrm{PGL}_n(F)\), where \(F\) is a non-Archimedean local field.  We
introduce geometric \(k\)-geodesics in \(\scrB\) by means of CAT(0)
convexity and combinatorial \(k\)-geodesics by a local successor relation on
pointed \(k\)-facets.  We prove that the two notions coincide.  This allows
us to use the local combinatorial definition on quotients
\(\Gamma\backslash\scrB\), without referring to the universal covering.
When \(\Gamma\) is discrete, torsion-free, cocompact, and type-preserving,
the primitive closed \(k\)-geodesics define zeta functions \(Z_k\) and their
\(\epsilon\)-twisted variants \(Z_k^\epsilon\).  Our main result identifies
an alternating product of these zeta functions with the unramified
\(L\)-function of \(L^2(\Gamma\backslash \mathrm{PGL}_n(F))\):
\[
(1-u^n)^{\chi(\Gamma\backslash\scrB)}
L(\Gamma,q^{(n-1)/2}u)
=
\prod_{k=1}^{n-1} Z_k^\epsilon(\Gamma\backslash\scrB,u)^{(-1)^{k+1}}.
\]
This gives a uniform Ihara-type identity for all \(\mathrm{PGL}_n\).  We also
extend the construction and the identity to \(\mathrm{PGL}_n(D)\), where
\(D\) is a central division algebra over \(F\); in that setting the residue
parameter is \(Q=|\ringO_D/\mathfrak p_D|\).
\end{abstract}
\maketitle
\def \X {\scrB_\Gamma}
\def \oo {\mathrm{O}}
\def \I {\mathrm{I}}
\def \G {\mathrm{G}}
\def \Q {\mathbb{Q}}
\def \scrC{\mathcal{C}}
\def \C {\mathbb{C}}

\section{Introduction}
\enlargethispage{3pt}

\noindent\textbf{Ihara's zeta function.}
In 1966, Ihara introduced a zeta function for torsion-free discrete subgroups
of \(\mathrm{PGL}_2\) over a \(p\)-adic field~\cite{Ih}.  When the subgroup
\(\Gamma\) is cocompact, it acts freely on the Bruhat--Tits tree and the
quotient is a finite graph \(X_\Gamma\).  The construction was a \(p\)-adic
analogue of Selberg's zeta function for compact quotients of the upper
half-plane.  Its essential geometric feature is the same: it counts primitive
closed geodesics.

For a graph, a geodesic has an equivalent combinatorial description as a
backtrackless, tailless path.  This makes the Euler product accessible to
linear algebra.  If \(F\) has residue field of cardinality \(q\), the
Bruhat--Tits tree is \((q+1)\)-regular, and Ihara's identity may be written
\begin{equation}\label{eq:Ihara}
 Z(X_\Gamma,u)
 =\frac{(1-u^2)^{\chi(X_\Gamma)}}{\det(I-Au+qu^2I)}
 =(1-u^2)^{\chi(X_\Gamma)}L(\Gamma,q^{1/2}u).
\end{equation}
Here \(A\) is both the graph adjacency operator and the spherical Hecke
operator, while \(L(\Gamma,u)\) is the unramified \(L\)-function attached to
the unramified spectrum of \(L^2(\Gamma\backslash\mathrm{PGL}_2(F))\); see
also~\cite{Ha}.  Thus~\eqref{eq:Ihara} connects a zeta function defined by
geodesics to an \(L\)-function from the representation theory of
\(\mathrm{PGL}_2(F)\).  This circle of ideas became part of the arithmetic
theory of regular graphs, notably the construction of Ramanujan
graphs~\cite{LPS}, and later of Ramanujan complexes~\cite{LSV}.

\medskip
\noindent\textbf{The higher-dimensional problem.}
Beginning with Deitmar--Hoffman's work on \(\mathrm{PGL}_3\)~\cite{DH},
Ihara's theory was extended from trees to two-dimensional buildings.  Precise
zeta identities were subsequently obtained for quotients of the buildings of
\(\mathrm{PGL}_3\)~\cites{KL,KLW} and \(\mathrm{PGSp}_4\)~\cite{FLW}.
As in rank one, these identities connect \(L\)-functions and zeta functions
that count geodesics.  A genuinely new feature, however, is that one needs
several kinds of zeta functions.  In the \(\mathrm{PGL}_3\) identity of
\cite{KL}, for example, edge and chamber zeta functions occur together.

The geometry behind those two zeta functions suggests how to proceed in
arbitrary dimension.  The paths counted by the edge zeta function are
geodesic curves on the building quotient paved by edges (1-simplices), and
their lifts to the building are straight line segments paved by edges.  The
chamber geodesics are even more revealing after lifting them to the building:
in an apartment they form straight strips paved by chambers (2-simplices).
Thus the apparently different edge and chamber constructions are instances of
one geometric phenomenon.  Finding a formulation that captures all such types
simultaneously is a principal obstacle to a uniform arbitrary-rank identity.

This paper resolves that problem for the entire family
\(\mathrm{PGL}_n(F)\), \(n\ge2\).  Since its building has dimension \(n-1\),
this is a single theory encompassing geometry of arbitrarily high dimension.
To our knowledge, it is the first Ihara-type identity uniform in \(n\) for
this family.

\medskip
\noindent\textbf{Geometric and combinatorial \(k\)-geodesics.}
The conceptual core of the paper is the identification of two notions on the
building \(\scrB\).  A \(k\)-geodesic should be viewed as a \(k\)-dimensional
subcomplex of \(\scrB\), paved by \(k\)-facets; except when \(k=1\), it is not
a curve.  We introduce two descriptions of this object, one geometric and one
combinatorial.  A geometric \(k\)-geodesic is defined by CAT(0)
convexity: the convex hull of any two of its \(k\)-facets is exactly the union
of the facets between them.  In its standard bi-infinite realization, such a
geodesic is a straight strip in an apartment, isometric to
\(S\times\mathbb R\) for a Euclidean \((k-1)\)-simplex \(S\), and paved by
\(k\)-facets.

A combinatorial \(k\)-geodesic is a sequence of pointed \(k\)-facets linked
by a successor relation.  Crucially, this relation is purely local.  Two
consecutive facets meet along a common \((k-1)\)-facet, and the relevant
directions are opposite in its link; the link is a spherical building.

Proving that these descriptions agree is substantial: Sections~4--5 are
devoted to it.  Every successor chain underlies a geometric \(k\)-geodesic,
and every geometric \(k\)-geodesic has a compatible pointing.  For \(k\ge2\)
the compatible orientation is unique; for \(k=1\) the two orientations give
the two directed paths.  After proving this equivalence, we simply call the
common object a \emph{\(k\)-geodesic}.  On a quotient
\(\scrB_\Gamma=\Gamma\backslash\scrB\), we use the local combinatorial
description as the definition.  In particular, when \(\Gamma\) is trivial,
this definition is exactly the combinatorial notion on the building.

\medskip
\noindent\textbf{Zeta functions and the main theorem.}
Assume now that \(\Gamma\) is also cocompact, so that \(\scrB_\Gamma\) is
finite.  For each \(1\le k\le n-1\), the primitive closed directed
\(k\)-geodesics \(\scrC\) define
\[
 Z_k(\scrB_\Gamma,u)
 =\prod_{[\scrC]}(1-u^{l_A(\scrC)})^{-1}.
\]
Here \(l_A\) is the algebraic length; the geometric length \(l_G\) records
the number of facets.  The sign
\(
 \epsilon(\scrC)=(-1)^{(k+1)l_G(\scrC)}
\)
gives the twisted zeta function
\[
 Z_k^\epsilon(\scrB_\Gamma,u)
 =\prod_{[\scrC]}
  (1-\epsilon(\scrC)u^{l_A(\scrC)})^{-1}.
\]
The different values of \(k\) organize, rather than eliminate, the several
types of geodesics that first appeared in dimension two.

Our main theorem is the following.
\begin{theorem*}
Let \(F\) be a non-Archimedean local field with residue field of cardinality
\(q\), let \(\scrB\) be the Bruhat--Tits building of
\(\mathrm{PGL}_n(F)\), and let \(\Gamma\) be a discrete torsion-free
cocompact subgroup that preserves vertex types.  Then
\[
(1-u^n)^{\chi(\scrB_\Gamma)}
L(\Gamma,q^{(n-1)/2}u)
=
\prod_{k=1}^{n-1}
Z_k^\epsilon(\scrB_\Gamma,u)^{(-1)^{k+1}}.
\]
Here \(L(\Gamma,u)\) is the unramified standard \(L\)-function attached to the
unramified spectrum of
\(L^2(\Gamma\backslash\mathrm{PGL}_n(F))\), and
\(\chi(\scrB_\Gamma)\) is the Euler characteristic.
\end{theorem*}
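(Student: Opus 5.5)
The plan is to follow the Ihara--Bass strategy: write each \(Z_k^\epsilon\) as the reciprocal of a determinant of a \(\Gamma\)-invariant transfer operator, and then identify the alternating product of these determinants with the Hecke-theoretic \(L\)-function.

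\textbf{Step 1: each \(Z_k^\epsilon\) as a determinant.} For each \(k\), let \(P_k\) be the finite set of pointed \(k\)-facets of \(\scrB_\Gamma\). Define the \emph{edge-type operator} \(T_k\) on \(\mathbb C[P_k]\) by the local successor relation. Because combinatorial and geometric \(k\)-geodesics coincide (Sections~4--5), closed directed \(k\)-geodesics on \(\scrB_\Gamma\) correspond bijectively to periodic successor chains in \(P_k\). The algebraic length \(l_A\) then has to be recorded by weighting each successor step with \(u\) raised to its algebraic-length increment, giving a weighted operator \(T_k(u)\). The sign \(\epsilon\) is inserted as \((-1)^{k+1}\) per facet step. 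The standard argument \(\log\det(I-T)^{-1}=\sum_m \tr T^m/m\), together with grouping periodic chains into primitive classes, then gives
\[
Z_k^\epsilon(\scrB_\Gamma,u)=\det\bigl(I-T_k^\epsilon(u)\bigr)^{-1}.
\]

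\textbf{Step 2: pass to the building and decompose \(\mathbb C[P_k]\).} Lift to \(\scrB\). The pointed \(k\)-facets of a fixed type form a single \(\mathrm{PGL}_n(F)\)-orbit \(G/K_k\), where \(K_k\) is a parahoric-type subgroup (the stabilizer of the pointed facet). Hence
\[
\mathbb C[P_k]=\bigoplus_{\text{types}} L^2(\Gamma\backslash G)^{K_k},
\]
and \(T_k\) acts by an element of the Iwahori--Hecke algebra. Decompose \(L^2(\Gamma\backslash G)=\widehat\bigoplus \pi\). Only representations with Iwahori-fixed vectors contribute, and these are subquotients of unramified principal series \(\mathrm{Ind}(\chi)\) with Satake parameters \(z_1,\dots,z_n\). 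On \(\mathrm{Ind}(\chi)^{K_k}\), computed via Bruhat/Iwahori decomposition in a model apartment, I expect the successor operator to be triangular with eigenvalues given by products \(z_{i_1}\cdots z_{i_k}\cdot(\text{power of } q)\), up to the sign and the \(u\)-weight. Concretely, a strip geodesic translates by a cocharacter of the form \(e_{i_1}+\dots+e_{i_k}\). This should yield
\[
\det(I-T_k^\epsilon(u)\mid\pi)=\prod_{|S|=k}\Bigl(1-q^{k(n-k)/2}\,u^{\cdot}\textstyle\prod_{i\in S}z_i\Bigr)^{m_{k}(\pi)},
\]
with explicit multiplicities \(m_k(\pi)\).

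\textbf{Step 3: combine the factors.} Taking the alternating product over \(k\), the exterior-power factors should telescope, leaving only the \(k=1\) standard factor \(\prod_i(1-q^{(n-1)/2}uz_i)\) for each unramified spherical \(\pi\). The non-spherical Iwahori-fixed pieces (Steinberg-type and other degenerate constituents) should cancel except for the trivial/sign characters, which contribute powers of \((1-u^n)\). Their total exponent should be computed by counting cells: \(\sum_k(-1)^k\#\{k\text{-facets}\}\) gives \(\chi(\scrB_\Gamma)\).

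\textbf{Main obstacle.} The hard part is Step 2: computing the spectrum of \(T_k\) on \(\mathrm{Ind}(\chi)^{K_k}\) uniformly in \(n\) and \(k\). I would first restrict to generic \(\chi\), where the Iwahori fixed space is a free \(\mathbb C[W]\)-module and the Bernstein presentation can be used: \(T_k\) should become a translation by a minuscule cocharacter twisted by a Weyl element. The degenerate and nontrivial constituents would then be handled by continuity in \(\chi\), since both sides are polynomial in the Satake parameters, together with a separate count for the trivial representation that produces the \((1-u^n)^{\chi}\) term.
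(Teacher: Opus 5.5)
Your Step~1 is correct and is exactly Theorem~\ref{thm:zeta-determinant} of the paper. On the quotient, closed \(k\)-geodesics are by definition closed paths in the successor digraph, so Sections~4--5 are not needed there. The gap is in Steps~2--3, and the spectral picture in Step~2 is wrong, not merely unproved. A \(k\)-geodesic of type \((a,b)\), with \(b=\lambda_k\) the fixed last part, does not translate by a cocharacter with \(k\) nonzero entries. In the standard model of Section~\ref{s:apt}, one period of \(k\) successor steps gives \(w_{i+k}=w_i+w_k\) with \(w_k=v_{n-b}\). That is translation by the minuscule coweight with \(n-b\) entries equal to \(1\), and the algebraic length gained over the period is \(\lambda_0+\cdots+\lambda_{k-1}=n-b\). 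So on a spherical \(\pi\) the factors of \(\det(I\mp T_k)\) are governed by \(\wedge^{m}\) of the Satake parameter with \(m=n-b\), and every \(m\) with \(k\le m\le n-1\) occurs, not just \(m=k\). Already for \(n=3\), \(k=1\), the pointed edges of type \((2,1)\) contribute factors \(1-q\,z_iz_j\,u^2\). With your formula, different \(k\) carry different exterior powers, so the alternating product in Step~3 has nothing to telescope against. The real cancellation is among different \(k\) with the same \(m\): for \(m\ge2\), \(\wedge^m\) appears in \(Z_1,\ldots,Z_m\). Removing it needs a signed identity over all circular partitions of \(m\) into \(k\) parts, involving the parahoric-fixed dimensions and the \(\epsilon\)-twist. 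It must also account for \(T_k\) permuting parahoric types cyclically, so \(T_k\) is block-cyclic rather than an element of one Hecke algebra. You neither formulate nor prove such an identity.

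The handling of the other constituents fails too. Continuity in \(\chi\) only gives identities for the full principal series \(\mathrm{Ind}(\chi)\), i.e.\ for the sum of its composition factors. But \(L^2(\Gamma\backslash G)\) contains individual Iwahori-spherical irreducibles with independent multiplicities, and each needs its own computation. These include Steinberg and its twists by unramified characters of order dividing \(n\), intermediate constituents, and non-generic spherical ones such as the trivial representation. The non-spherical ones also do not cancel. For \(n=2\), the Steinberg representation and its unramified quadratic twist each occur \(b_1=1-\chi\) times, and together they contribute exactly \((1-u^2)^{-b_1}\) to \(Z_1\), matching that part of \((1-u^2)^{\chi}=(1-u^2)^{1-b_1}\). ``Counting cells'' presupposes exactly the per-constituent answer you have not obtained.

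The paper sidesteps all of this and uses no representation theory beyond the spherical Satake transform. It forms the \(u\)-twisted cochain complex \(C_\bullet(\scrB_\Gamma)\) on pointed facets and sets \(\Phi_i=d\delta+\delta d+(1-u^n)\), with \(\delta\) built from the M\"obius function \(\mu(m)=(-1)^mq^{m(m-1)/2}\) of the subspace lattice. Euler--Poincar\'e then gives \(\prod_i\det(\Phi_i)^{(-1)^i}=(1-u^n)^{\sum_i(-1)^i(i+1)V_i}\). The operator \(\Phi_0\) is literally the Hecke polynomial. For \(i\ge1\), the factorization \(W_i\Phi_iQ_i\) with \(\det Q_i=1\) and \(\det W_i=(1-u^n)^{V_i}\) gives \(\det\Phi_i=(1-u^n)^{iV_i}\det(I-(-1)^{i+1}T_i)\). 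So the factor \((1-u^n)^{\chi}\) comes from homological bookkeeping, not from a case-by-case spectral analysis.
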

For \(n=2\), the two lengths agree and the theorem is precisely
Ihara's identity~\eqref{eq:Ihara}.  For \(n=3\), after matching conventions,
it gives the main identity of~\cite{KL}, with the \(k=1\) and \(k=2\) factors
corresponding to the edge and chamber contributions.  There is a small
difference in scope: the quotient in~\cite{KL} is treated as a simplicial
complex, whereas here it is endowed with its natural \(\Delta\)-complex
structure.  Thus our formulation also applies when distinct facets of the
quotient have the same set of vertices.  The theorem therefore both recovers
the known low-dimensional cases and explains why all dimensions of facets
enter in arbitrary rank.

\medskip
\noindent\textbf{Central division algebras.}
The method is insensitive to the commutativity of the coefficient field.  Let
\(D\) be a central division algebra of degree \(d\) over \(F\), let
\(\ringO_D\) be its maximal order, and put
\[
Q=\#(\ringO_D/\mathfrak p_D)=q^d.
\]
The building of \(\mathrm{PGL}_n(D)\) is again of type
\(\widetilde A_{n-1}\).  Replacing \(F\)-lattices by right
\(\ringO_D\)-lattices leaves the geometric arguments unchanged, while every
finite vector-space calculation takes place over the residue field
\(\ringO_D/\mathfrak p_D\).  Consequently all the geodesic constructions and
the main identity extend to \(\mathrm{PGL}_n(D)\), with \(q\) replaced by
\(Q\) and with the degree-\(n\) relative spherical \(L\)-function.  The
precise statement and proof are given in Section~9.

\medskip
\noindent\textbf{The proof.}
The \(\mathrm{PGL}_3\) identity just mentioned has two published proofs: the
combinatorial proof of~\cite{KL} and the representation-theoretic proof
of~\cite{KLW}.  Our proof is different from both and, upon specialization,
gives another proof of that identity.  It is inspired instead by Hoffman's
reformulation of Bass's proof of the Ihara identity in terms of torsion of
complexes~\cite{Hof}.

The higher-rank argument is necessarily more elaborate.  The Euler products
are first expressed as determinants of successor operators on pointed
\(k\)-facets.  These spaces, for all \(k\), are assembled into a cochain
complex.  A family of Laplacian-type endomorphisms then compares the
alternating product of successor determinants with the spherical Hecke
determinant for the unramified \(L\)-function.  The principal technical step
is a M\"obius-inversion calculation of these endomorphisms.  This strategy
retains the cohomological idea behind Hoffman's proof while accommodating all
facet dimensions and all possible algebraic lengths.

Other related developments include Artin \(L\)-functions for
\(\mathrm{PGL}_3\) quotients~\cite{KL2}, zeta and \(L\)-functions for
apartments and buildings~\cite{KLW2}, geometric zeta functions for
higher-rank \(p\)-adic groups~\cite{DKgeo}, Bass-type zeta functions for
building lattices~\cite{DKM}, and twisted Poincar\'e-series identities for
rank-two buildings~\cite{KM}.  See also~\cite{DK} and~\cite{Sa} for related
constructions.

\medskip
\noindent\textbf{Organization of the paper.}
Section~2 reviews Bruhat--Tits theory, the simplicial structure of \(\scrB\),
and the geometric and algebraic length functions.
Section~3 translates this structure into the lattice model used in the proofs.
Section~4 studies geometric \(k\)-geodesics, exhibits the standard straight
strip, and proves the geometric normal form.  Section~5 introduces
combinatorial \(k\)-geodesics and proves their equivalence with the geometric
ones.  Section~6 passes to quotient \(\Delta\)-complexes, defines
\(k\)-geodesics there by the local successor relation, and then imposes
finiteness to define the zeta functions and state the main theorem.  Section~7
proves the main theorem modulo a determinant computation, whose technical
proof occupies Section~8.  Section~9 extends the theory to central division
algebras over \(F\).

\section{Bruhat--Tits Theory}

This section fixes the language used throughout the paper.  We recall the type
of a facet, the ordered type of a pointed facet, opposite neighboring facets,
and the successor relation.  These notions are intrinsic to the building, while
Section~3 gives their concrete realization in terms of lattice chains.

\subsection{Non-Archimedean Local Fields}

Throughout the paper, let \(n\ge2\), and let \(F\) be a non-Archimedean local
field with ring of integers \(\ringO\), uniformizer \(\pi\), and residue field
\(\kappa=\ringO/\pi\ringO\) of cardinality \(q\).

\subsection{Ordered Partitions}

Let \(n\) and \(k\) be positive integers. An \textit{ordered partition of \(n\) into \(k\) parts} is a \(k\)-tuple \((\lambda_0, \ldots, \lambda_{k-1})\) of positive integers such that \(\lambda_0 + \cdots + \lambda_{k-1} = n\). We always regard such a \(k\)-tuple as a family indexed by the set \(\{0, 1, \ldots, k-1\}\), which we also identify with \(\mathbb{Z}/k\mathbb{Z}\).

\subsection{Circular Partitions}

Let \(\scrP_k(n)\) be the set of ordered partitions of \(n\) into \(k\) parts. The group \(\mathbb{Z}/k\mathbb{Z}\) acts on \(\scrP_k(n)\) by cyclic rotation, and a \(\mathbb{Z}/k\mathbb{Z}\)-orbit in \(\scrP_k(n)\) is called a \textit{circular partition of \(n\) into \(k\) parts}. We denote the \(\mathbb{Z}/k\mathbb{Z}\)-orbit of \((\lambda_0, \ldots, \lambda_{k-1})\) by \((\!(\lambda_0, \ldots, \lambda_{k-1})\!)\).

\subsection{The Building}

The groups \(\GL_n(F)\), \(\SL_n(F)\), and \(\PGL_n(F)\) share the same (reduced) Bruhat--Tits building \(\scrB\); see \cite{BT}, \cite{tits}, and \cite{AB}. We denote their groups of \(F\)-points by \(G_\gl\), \(G_\sc\), and \(G_\ad\), respectively. Let \(G_\dag\) be any of these groups (\(\dag = \gl, \ad, \sc\)).

\subsection{Affine Dynkin Labels}

Let \(\Delta_{\mathrm{aff}}\) be the set of nodes of the \textit{affine} Dynkin diagram of \(G_\dag\), and let \(\Xi_\dag\) be Bruhat--Tits' \(\Xi\) group of \(G_\dag\). The diagram \(\Delta_{\mathrm{aff}}\) is a circular diagram with \(n\) nodes, independent of \(\dag\). The group \(\Xi_\dag\) is trivial for \(\dag = \sc\), and \(\Xi_\dag = \mathbb{Z}/n\mathbb{Z}\) for \(\dag = \gl\) or \(\ad\). The group \(\Xi_\dag\) acts naturally on \(\Delta_{\mathrm{aff}}\), and \(\Delta_{\mathrm{aff}}\) is a principal homogeneous space for \(\Xi_\dag\) when \(\dag = \gl\) or \(\ad\).

The circular diagram \(\Delta_{\mathrm{aff}}\) has two orientations when \(n \geq 3\), which are permuted by the outer automorphism of \(G_\ad\). We will fix an orientation once and for all. Choosing such an orientation is equivalent to selecting one of the two \(n\)-dimensional fundamental representations of \(G_\sc\) as the standard representation. The choice of orientation also determines an action of \(\mathbb{Z}/n\mathbb{Z}\) on \(\Delta_{\mathrm{aff}}\), written as \((i, s) \mapsto i + s\), such that the vertices on \(\Delta_{\mathrm{aff}}\) connected to \(s\) are \(\pm 1 + s\).

\subsection{The Circular Partition Associated with a Facet}\label{bt-facet-type}

Let \(C\) be a \(k\)-facet of \(\scrB\), and let \(P_C\) be its associated
parahoric subgroup. The set \(S\) of vertices of \(C\) corresponds to a subset
\(T\) of \(\Delta_{\mathrm{aff}}\), well-defined modulo the action of
\(\Xi_\dag\), with \(\#T = k + 1\). The Dynkin diagram of the maximal reductive
quotient of \(P_C\) is obtained by removing the nodes in \(T\) from the diagram
\(\Delta_{\mathrm{aff}}\). It is then of type
\(A_{\lambda_0 - 1}, \ldots, A_{\lambda_k - 1}\), such that
\(\lambda_0 + \cdots + \lambda_k = n\). Since this is a subdiagram of
\(\Delta_{\mathrm{aff}}\), it determines a well-defined circular partition
\((\!(\lambda_0, \ldots, \lambda_k)\!)\) of \(n\) into \(k + 1\) parts,
called the \textit{type} of \(C\).

In addition, the circular diagram structure provides a natural notion of \textit{successors} on \(S\) as follows. Suppose \(v, v' \in S\) correspond to \(i, i + s \in \Delta_{\mathrm{aff}}\), with \(1 \leq s \leq n\). We say that \(v'\) is the \textit{successor of \(v\) on \(C\)} if \(i + 1, \ldots, i + s - 1 \notin T\). Since \(T\) is finite and the diagram is circular, every \(v \in S\) has a unique successor.

\subsection{Pointed Facets and Associated Ordered Partitions}\label{bt-pointed-type}
We will also work with \textit{pointed facets} of \(\scrB\), which are pairs \((C, v)\), where \(C\) is a facet and \(v\) is a vertex of \(C\). Assume that \(v\) corresponds to \(c \in \Delta_{\mathrm{aff}}\), and let
\[
T = \{c, c + \lambda_0, c + \lambda_0 + \lambda_1, \ldots, c + \lambda_0 + \cdots + \lambda_{k-1}\}
\]
such that \((\lambda_0, \ldots, \lambda_k)\) is an ordered partition of \(n\) into \(k + 1\) parts, canonically associated with \((C, v)\). We refer to \((\lambda_0, \ldots, \lambda_k)\) as \textit{the type of \((C, v)\)}, to \(v\) as \textit{the \(0\)-th vertex of \((C, v)\)}, and to the vertex \(v_1\) corresponding to \(c + \lambda_0\) as \textit{the first vertex of \((C, v)\)}, and so on.

Equivalently, a pointed \(k\)-facet \((C,v)\) may be represented by the ordered
sequence of its vertices
\[
(C,v)\;=\;(v_0,v_1,\ldots,v_k),
\]
where \(v_0=v\). After the chosen orientation identifies \(\Delta_{\mathrm{aff}}\) with
\(\mathbb Z/n\mathbb Z\), the successive cyclic differences
\[
v_{i+1}-v_i=\lambda_i\quad (0\leq i<k),\qquad
v_0-v_k=\lambda_k
\]
encode the ordered partition \((\lambda_0,\ldots,\lambda_k)\).
This ordered vertex sequence provides a convenient combinatorial model for
pointed facets and will be used throughout the sequel.

\subsection{Geometric and Algebraic Lengths}\label{length}

Let \(\dot C=(C,v)\) be a pointed \(k\)-facet of type
\((\lambda_0,\ldots,\lambda_k)\).  We assign to \(\dot C\) the
\emph{one-step geometric length} and the \emph{algebraic length}
\[
l_G(\dot C)=1,
\qquad
l_A(\dot C)=\lambda_0.
\]
Thus geometric length records one step, whereas algebraic length weights that
step by the first part of its ordered type.  These definitions depend only on
the typed pointed facet and therefore also apply to the quotient
\(\Delta\)-complex introduced in Section~\ref{quot-cx}.

\subsection{Opposite Neighboring Facets}\label{bt-opp}

Assume \(k\geq 1\).  Let \(C\) and \(C'\) be two distinct \(k\)-facets of
\(\scrB\) sharing a common \((k-1)\)-facet \(C''\), which is necessarily
unique.  The two coface incidences
\[
C''\longrightarrow C,
\qquad
C''\longrightarrow C'
\]
determine two vertices in the spherical building
\(\operatorname{Lk}_{\scrB}(C'')\); see
\cite{BT}*{5.1.32} and \cite{tits}*{3.5.4}.  We call \(C\) and \(C'\)
\textit{opposite neighboring facets} if these two vertices are opposite in this
link.  We use opposition in the sense of \cite{AB}*{4.68 and 4.72}, the latter
being the extension from chambers to arbitrary simplices.

Equivalently, in any apartment \(A\) of \(\scrB\) containing \(C\) and \(C'\),
every wall in \(A\) containing \(C''\), but not \(C\) or \(C'\), separates
\(C\) and \(C'\).

\subsection{The Successor Relation}\label{bt-successor-relation}

Let \((C,v)\) be a pointed \(k\)-facet of type
\((\lambda_0,\ldots,\lambda_k)\), and write \((C,v)\) in ordered form as
\[
(C,v)=(v_0,v_1,\ldots,v_k),
\]
where \(v_0=v\) and the successive differences encode the ordered partition
\((\lambda_0,\ldots,\lambda_k)\).

Let \(C''\) be the codimension-\(1\) facet of \(C\) opposite the vertex \(v_0\),
and let \(C'\) be any \(k\)-facet such that the vertices of
\(\operatorname{Lk}_{\scrB}(C'')\) determined by the incidences
\(C''\to C\) and \(C''\to C'\) are opposite.
Let \(v'=v_1\), the first vertex of \((C,v)\).
Then \((C',v')\) is a pointed \(k\)-facet whose type is
\[
(\lambda_1,\lambda_2,\ldots,\lambda_{k-1},\lambda_0,\lambda_k).
\]
We call \((C',v')\) a \textit{successor} of \((C,v)\).

Equivalently, when both pointed facets are written as ordered sequences of
vertices,
\[
(C,v)=(v_0,v_1,\ldots,v_k),
\qquad
(C',v')=(v_1,v_2,\ldots,v_k,v_{k+1}),
\]
the successor relation requires that the last \(k\) vertices of \((C,v)\)
coincide with the first \(k\) vertices of \((C',v')\), and that the underlying
facets \(C\) and \(C'\) are opposite neighboring facets across this common
\((k-1)\)-face.
Thus the successor relation is an intrinsic combinatorial operation on pointed
facets: it shifts the ordered vertex sequence forward and crosses the common
\((k-1)\)-face to an opposite neighboring facet.  The next section translates
this intrinsic description into lattice chains, where algebraic length and the
successor operation can be computed explicitly.

\section{Lattices}

We now pass from the intrinsic Bruhat--Tits description to the lattice model.
This concrete model will be used in all later arguments: it realizes facets as
chains of lattices, identifies their ordered types, computes algebraic length,
and gives an explicit form of the successor relation.

\subsection{The Lattice Model of the Building} 
The Bruhat--Tits building \(\scrB\) of \(G_\dag\) is a simplicial complex that can be described as follows. Throughout, we use \(k\)-simplex and \(k\)-facet interchangeably. For any lattice \(L\) in \(F^n\), we define its homothety class as
\[
[L] := \{\alpha L : \alpha \in F^\times\}.
\]

The set of vertices \(\scrB_0\) on \(\scrB\) consists of all such homothety classes of lattices in \(F^n\). A set of vertices \(\{\scrL_0, \ldots, \scrL_k\}\) forms a \(k\)-simplex (also called a \(k\)-facet) if there exist representatives \(L_i \in \scrL_i\) for each \(i\) and a permutation \(\sigma\) of \(\{0, \ldots, k\}\) such that
\[
L_{\sigma(0)} \supsetneq L_{\sigma(1)} \supsetneq \cdots \supsetneq L_{\sigma(k)} \supsetneq \pi L_{\sigma(0)}.
\]
The natural action of \(G_\dag\) on \(\scrB_0\) preserves this simplicial structure.

\subsection{Vertex Types in the Lattice Model} 
We refer to \(\ringO_F^n\) as the \textit{standard lattice in \(F^n\)} and denote it by \(L_\st\). For any lattice \(L\), there exists a lattice \(L'\) such that \(L' \subset L_\st\) and \(L' \subset L\). The number
\[
\widetilde{\imath}(L)
:=
\length(L_\st/L')-\length(L/L')
\]
depends only on \(L\), not on \(L'\), and its image
\[
i([L]):=\widetilde{\imath}(L)+n\mathbb Z
\quad\text{in}\quad \mathbb Z/n\mathbb Z
\]
depends only on the homothety class \([L]\).

The fibers of the surjection \(i : \scrB_0 \to \mathbb{Z}/n\mathbb{Z}\) are exactly the \(G_\sc\)-orbits, allowing us to identify \(\Delta_{\mathrm{aff}} := G_\sc \backslash \scrB_0\) with \(\mathbb{Z}/n\mathbb{Z}\).

The group \(\Xi_\dag\) is defined as the image of \(G_\dag\) under the map to \(\Aut(\Delta_{\mathrm{aff}})\), capturing the automorphism structure of the reduced building. Therefore, \(\Xi_\sc\) is trivial. The simple formula \(i(g\scrL) = \ord(\det g) + i(\scrL)\) for \(g \in G_\gl\) shows that \(\Xi_\ad = \Xi_\gl\) are cyclic groups of order \(n\), and \(\Delta_{\mathrm{aff}}\) is a principal homogeneous space for \(\Xi_\ad\) and \(\Xi_\gl\).

\subsection{The Circular Partition Associated with a Facet}\label{lattice-facet-type}
Let \(C\) be a \(k\)-facet with vertex set
\[
S = \{[L_0], \ldots, [L_k]\}.
\]
We may and do assume
\[
L_0 \supsetneq L_1 \supsetneq \cdots \supsetneq L_k \supsetneq \pi L_0. \tag{$\ast$}
\]
Define
\(\lambda_i=\dim_\kappa(L_i/L_{i+1})\) for \(0\le i\le k-1\), and
\(\lambda_k=\dim_\kappa(L_k/\pi L_0)\). Then
\((\lambda_0, \ldots, \lambda_k)\) is an ordered partition of \(n\) into
\(k + 1\) parts. The circular partition
\((\!(\lambda_0, \ldots, \lambda_k)\!)\) is independent of the choice of
\(L_0, \ldots, L_k\) to represent \(S\) and is a \(G_\dag\)-invariant
associated with \(C\).

With this description of \(S\), we say that \([L_{i+1}]\) is the \textit{successor of \([L_i]\) on \(C\)} for each \(i \in \mathbb{Z}/(k + 1)\mathbb{Z}\).

\subsection{Pointed Facets and Associated Ordered Partitions}\label{lattice-pointed-type}
Let \((C, v)\) be a pointed \(k\)-facet in the building \(\scrB\), with \(v = [L_0]\) as its vertex. There exist unique lattices \(L_1, \ldots, L_k\) satisfying condition \((\ast)\), such that the vertices of \(C\) are represented by \([L_0], \ldots, [L_k]\). Define \(\lambda_i\) as above. The ordered partition \((\lambda_0, \ldots, \lambda_k)\) is a \(G_\dag\)-invariant associated with \((C, v)\), reflecting both the geometric and algebraic structures of the facet.

We denote \((C,v)\) by \([L_0,L_1,\ldots,L_k]\).  In this model, the
algebraic length of Section~\ref{length} is computed by
\[
l_A(C,v)=\lambda_0=\dim_\kappa(L_0/L_1).
\]

\subsection{Opposite Neighboring Facets and Successors}\label{successor}
Consider two distinct \(k\)-facets \(C\) and \(C'\) in \(\scrB\) that share a common \((k - 1)\)-subfacet \(C''\). Let \(v\) and \(\hat{v}\) be the unique vertices of \(C\) and \(C'\), respectively, that are not contained in \(C''\). We represent the pointed facets \((C, v)\) and \((C', \hat{v})\) by the lattice sequences \([L_0, \ldots, L_k]\) and \([L_0', \ldots, L_k']\), respectively, with the assumption that \(L_i = L_i'\) for \(i = 1, \ldots, k\).

The facets \(C\) and \(C'\) are called \textit{opposite neighboring facets} if the quotient \(\pi^{-1} L_k / L_1\) decomposes as the direct sum of \(L_0 / L_1\) and \(L_0' / L_1\). Equivalently, this condition is satisfied if \(L_0 \cap L_0' = L_1\) and \(L_0 + L_0' = \pi^{-1} L_k\). This lattice-theoretic characterization is consistent with the geometric notion of opposite facets discussed in Section~\ref{bt-opp}; see also \cite{AB}*{4.78}.

Next, we refine this concept by introducing the \textit{successor} relation between pointed facets associated with opposite neighboring facets. Let \(v' = [L_1'] = [L_1]\), so that the pointed facet \((C', v')\) is represented by the lattice sequence \([L_1', L_2', \ldots, L_k', \pi L_0'] = [L_1, L_2, \ldots, L_k, \pi L_0']\). We refer to \((C', v')\) as a \textit{successor} of \((C, v)\).

In summary, a pointed \(k\)-facet is a successor of another pointed \(k\)-facet if they are opposite neighboring facets, and the first \(k\) terms of the lattice sequence of the former equal the last \(k\) terms of the lattice sequence of the latter.

\section{Geometric \texorpdfstring{\(k\)-geodesics}{k-geodesics}}
\label{s:geometric-geodesic}

The next step is to isolate the geometric notion of a \(k\)-geodesic,
independent of pointings and successor operators.  We define it by CAT(0)
convexity: between any two facets in the sequence, the convex hull should be
exactly the union of the intervening facets.  The goal of this section is to
prove that such geodesics have a rigid normal form in an apartment.
We use the definitions of a CAT(0) space and of a convex subset given in
\cite{AB}*{11.3} and \cite{AB}*{11.27}, respectively.

Let \((C_0,\ldots,C_m)\) be a sequence of pairwise distinct \(k\)-facets in the
building \(\scrB\).  We say that \((C_0,\ldots,C_m)\) is a
\emph{geometric geodesic} if, for every \(0\le i\le j\le m\), the convex hull of
\(\overline{C}_i\cup \overline{C}_j\) in the CAT(0) realization of \(\scrB\) is
exactly
\[
\mathrm{conv}\bigl(\overline{C}_i\cup \overline{C}_j\bigr)
=
\overline{C}_i\;\cup\;\overline{C}_{i+1}\;\cup\;\cdots\;\cup\;\overline{C}_j.
\]
In particular, taking \(j=i+1\) shows that \(\overline{C}_i\cup\overline{C}_{i+1}\)
is convex for each \(i\); hence every consecutive pair \(C_i,C_{i+1}\) shares a
common \((k-1)\)-subfacet.

This convexity-based definition is global in nature.  To analyze it effectively,
we next study convexity properties of small configurations of facets, beginning
with the case of two \(k\)-facets sharing a common \((k-1)\)-subfacet.

\subsection{Convexity of Two Facets}
Let \(W\) be a finite Coxeter group. Assume that \(W\) arises from a root system \(\Phi\) in a Euclidean space \(V\), as described in \cite{Hu}. For simplicity, we assume \(\Phi\) spans \(V\) (i.e., \(W\) is essential). We will realize the Coxeter complex \(\Sigma(W)\) of \(W\) as in \cite{Hu}*{1.15}.

We begin with the spherical (finite) case, where convexity can be characterized
purely in terms of the local structure around a common subfacet.

\begin{lemma} \label{opp-convex-sph}
Let \(\Sigma(W)\) be the spherical Coxeter complex of a finite reflection group
\(W\) acting on the Euclidean space \(V\).  Let \(F, F'\) be \(k\)-facets of
\(\Sigma(W)\) sharing a common \((k-1)\)-subfacet \(F''\). The following are equivalent:
\begin{itemize}
    \item[\textup{(i)}] \(F, F'\) are opposite facets on the link of \(F''\).
    \item[\textup{(ii)}] \(\overline{F} \cup \overline{F}'\) is convex in \(V\).
    \item[\textup{(iii)}] There exists a \((k+1)\)-dimensional linear subspace of \(V\) containing \(\overline{F}\cup\overline{F'}\).
\end{itemize}
\end{lemma}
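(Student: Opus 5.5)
We first reduce to the link of \(F''\). Let \(U\) be the linear span of \(\overline{F''}\); it is a \(k\)-dimensional subspace, and we write \(U^\perp\) for its orthogonal complement. The stabilizer \(W_{F''}\) is the parabolic subgroup generated by the reflections fixing \(U\) pointwise. It acts on \(U^\perp\) as an essential finite reflection group, and its Coxeter complex, realized on the unit sphere of \(U^\perp\), is the link \(\operatorname{Lk}(F'')\). Let \(x\in F''\) be an interior point, say its barycenter. The facets \(F\) and \(F'\) determine vertices of the link, that is, rays in \(U^\perp\) spanned by unit vectors \(e,e'\). Concretely, we may take \(e\) proportional to the \(U^\perp\)-component of the vertex of \(F\) not in \(F''\), and similarly for \(e'\). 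Then \(\overline F\) is the cone \(\{y+te: y\in\overline{F''},\,t\ge0\}\) cut off by the appropriate face, and likewise for \(\overline{F'}\). All three conditions will be expressed in terms of \(e\) and \(e'\).

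Next we use the standard characterization of opposition in a spherical Coxeter complex realized in \(U^\perp\): two simplices (here two vertices) are opposite if and only if one is the image of the other under the antipodal map \(-1\), restricted to the support. For vertices this says exactly that \(e'=-e\); see \cite{AB}*{4.68, 4.72}. Indeed, opposition of simplices \(A,B\) means \(B=w_0^{(A)}A\)-type data, and geometrically this means \(-A\) and \(B\) lie in the same cell. For vertices, which are rays, this is the condition \(e'=-e\). With this in hand, the plan is the chain (i)\(\Rightarrow\)(iii)\(\Rightarrow\)(ii)\(\Rightarrow\)(i).

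(i)\(\Rightarrow\)(iii): if \(e'=-e\), then \(\overline F\cup\overline{F'}\) lies in \(U\oplus\mathbb R e\), which has dimension \(k+1\).

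(iii)\(\Rightarrow\)(ii): suppose a \((k+1)\)-dimensional subspace \(H\) contains both closures. Then \(H\supset U\), so \(H=U\oplus\mathbb R e\), and \(e'\in H\cap U^\perp=\mathbb R e\). Since \(F\neq F'\), we have \(e'\neq e\), so \(e'=-e\). The union of the two closed cones \(\overline F\) and \(\overline{F'}\) is then the union of two cones on either side of the hyperplane \(U\) inside \(H\), glued along \(\overline{F''}\). Here we use the conical realization of \cite{Hu}*{1.15}, in which facets are cones so that convexity is linear convexity. Since the two cones share the face \(\overline{F''}\) and point in opposite directions \(\pm e\), their union is \(\overline{F''}+\mathbb R e\), which is convex.

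(ii)\(\Rightarrow\)(i) is the main obstacle. Take \(y\in\overline F\) and \(y'\in\overline{F'}\) of the forms \(x+e\) and \(x+e'\) (after scaling). Convexity forces the midpoint \(x+\tfrac12(e+e')\) to lie in \(\overline F\cup\overline{F'}\). Its \(U^\perp\)-component \(\tfrac12(e+e')\) must therefore be a nonnegative multiple of \(e\) or of \(e'\), since points of \(\overline F\) near \(x\) have \(U^\perp\)-component in \(\mathbb R_{\ge0}e\). That happens only if \(e,e'\) are linearly dependent, i.e.\ \(e'=-e\). This is because \(e\neq e'\) are distinct unit vectors, so \(e+e'\) would otherwise be a strictly positive combination not proportional to either. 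The delicate point is making "near \(x\)" rigorous. We use that \(x\) is in the relative interior of \(F''\), so small perturbations stay in the cone, and we choose \(y,y'\) close enough to \(x\) that the midpoint's \(U\)-component still lies in \(\overline{F''}\)'s relative neighbourhood. Finally, \(e'=-e\) gives opposition by the characterization above.
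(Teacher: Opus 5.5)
Your arguments for (i)\(\Leftrightarrow\)(iii) and for (ii)\(\Rightarrow\)(i) are fine and agree with the paper's transverse-ray reasoning. In fact (ii)\(\Rightarrow\)(i) needs no ``near \(x\)'' care, since every point of \(\overline F\) has \(U^\perp\)-component in \(\mathbb R_{\ge0}e\).

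There is a genuine gap in (iii)\(\Rightarrow\)(ii), which is the one substantive direction (the paper's (i)\(\Rightarrow\)(ii)). Your claim that \(\overline F\cup\overline{F'}=\overline{F''}+\mathbb R e\) is false. Write the extra vertex of \(F\) as \(v_0=p+ae\) with \(p\in U\) and \(a>0\). Then \(\overline F=\overline{F''}+\mathbb R_{\ge0}v_0\) is a pointed simplicial cone, not part of the cylinder \(\overline{F''}+\mathbb Re\). Your description of \(\overline F\) as \(\{y+te\}\) ``cut off by a face'' has the same problem. For example, in the type \(A_2\) Coxeter complex in \(\mathbb R^2\) with \(k=1\), two adjacent chambers form a \(120^\circ\) sector, whereas \(\overline{F''}+\mathbb Re\) is a half-plane. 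More seriously, your argument uses nothing about \(W\) beyond \(e'=-e\), and that cannot suffice. Two \(100^\circ\) sectors in \(\mathbb R^2\) sharing a ray satisfy the analogue of (iii), yet their union is not convex. So some metric property of Coxeter cones has to enter.

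The missing ingredient is the paper's projection property: the orthogonal projection \(p\) of \(v_0\) onto \(U\) lies in \(\overline{F''}=\operatorname{cone}(v_1,\ldots,v_k)\).

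\begin{itemize}
\item For chambers, the paper derives this from \((\alpha_i,\alpha_j)\le0\) for simple roots. It computes \(p=v_0-\alpha_0/(\alpha_0,\alpha_0)\) and checks that \((\alpha_i,p)\ge0\) for \(i\ge1\).
\item For lower-dimensional facets, it uses the auxiliary lemma that this sign condition survives passage to the span of part of the dual basis.
\end{itemize}

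With \(p,p'\in\overline{F''}\), write \(v_0=p+ae\) and \(v_0'=p'-a'e\). For any segment from \(\overline F\) to \(\overline{F'}\), subtract from each point the appropriate nonnegative multiple of \(v_0\) or \(v_0'\). The remainder is then a nonnegative combination of points of \(\overline{F''}\). Hence the part of the segment on the \(e\)-side lies in \(\overline F\), the rest lies in \(\overline{F'}\), and the union is convex. Accordingly, (ii)\(\Rightarrow\)(i) is the routine direction, and (iii)\(\Rightarrow\)(ii) is where the real work lies.
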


\begin{proof}
Let \(H=\operatorname{span}(F'')\).  In the quotient of
\(\operatorname{span}(F\cup F')\) by \(H\), the images of \(F\) and \(F'\)
are rays.  They are opposite in the link of \(F''\) precisely when these rays
are opposite.  Equivalently, \(F\cup F'\) is contained in a
\((k+1)\)-dimensional linear subspace.  This proves
\textup{(iii)}\(\Leftrightarrow\)\textup{(i)}.  If \(\overline F\cup\overline{F'}\) is convex,
the two transverse rays cannot be linearly independent, since the segment
joining interior points on the two rays would otherwise leave the union.
Thus \textup{(ii)}\(\Rightarrow\)\textup{(iii)}.

It remains to prove \textup{(i)}\(\Rightarrow\)\textup{(ii)}.  We first establish the projection
property needed for this implication.  Suppose that
\(\mathbb{R}_{>0}v_0,\ldots,\mathbb{R}_{>0}v_k\) are the vertices of \(F\),
and that \(F''\) has vertices
\(\mathbb{R}_{>0}v_1,\ldots,\mathbb{R}_{>0}v_k\).  We claim that the
orthogonal projection of \(v_0\) to \(H\) lies in the cone
\[
K=\operatorname{cone}(v_1,\ldots,v_k).
\]

First assume that \(k=\dim V-1\), so that \(F\) is a chamber.  Choose simple
roots \(\alpha_0,\ldots,\alpha_k\) dual to \(v_0,\ldots,v_k\).  We use the
standard inequality \((\alpha_i,\alpha_j)\le0\) for \(i\ne j\); see the
corollary in \cite{Hu}*{1.3}.  The projection in question is to
\(\alpha_0^\perp\), and is equal to
\[
p = v_0 - \frac{(v_0, \alpha_0)}{(\alpha_0, \alpha_0)}\alpha_0 = v_0 - \frac{1}{(\alpha_0, \alpha_0)}\alpha_0.
\]
The cone \(K\) is given by the conditions \((\alpha_0,-)=0\) and
\((\alpha_i,-)\ge0\) for \(i=1,\ldots,k\).  For these indices,
\[
(\alpha_i, p) = (\alpha_i, v_0) - \frac{1}{(\alpha_0, \alpha_0)}(\alpha_i, \alpha_0) = -\frac{(\alpha_i, \alpha_0)}{(\alpha_0, \alpha_0)} \geq 0
\]
and hence \(p\in K\).  When \(k<\dim V-1\), restrict the inner product to
the span of the facet.  The next lemma shows that the same off-diagonal sign
condition survives this restriction, so the preceding argument applies
unchanged.

Apply the projection property to both \(F\) and \(F'\).  Under condition (i),
their extra rays lie on opposite sides of \(H\) in a common
\((k+1)\)-dimensional subspace.  Write their generators as
\(v_0=p+an\) and \(v'_0=p'-a'n\), where \(a,a'>0\), \(n\perp H\), and
\(p,p'\in K\).  If \(x=bv_0+h\in\overline F\) and
\(x'=b'v'_0+h'\in\overline{F'}\), with \(b,b'\ge0\) and \(h,h'\in K\),
then a point on the segment \([x,x']\) lies in \(\overline F\) when its
\(n\)-coordinate is nonnegative and in \(\overline{F'}\) when that coordinate
is nonpositive: after subtracting the corresponding multiple of \(v_0\) or
\(v'_0\), the remaining component is a nonnegative combination of
\(h,h',p,p'\), hence lies in \(K\).  Thus every segment joining the two facets
is contained in their union, and \(\overline F\cup\overline{F'}\) is convex.
\end{proof}

The next lemma records the stability of the sign condition on inner products under passage to such subspaces.

\begin{lemma}
Let \(V\) be a finite-dimensional real inner-product space.  Let
\(\alpha_1, \ldots, \alpha_n\) be a basis of \(V\) such that
\((\alpha_i, \alpha_j) \leq 0\) for \(i \neq j\). Let \(v_1, \ldots, v_n\)
in \(V\) be the dual basis, i.e., \((\alpha_i, v_j) = \delta_{ij}\). Let
\(1 \leq k \leq n\) and let \(V'\) be the span of \(v_1, \ldots, v_k\).
Let \(\beta_1, \ldots, \beta_k\) be the basis of \(V'\) dual to
\(v_1, \ldots, v_k\). Then \((\beta_i, \beta_j) \leq 0\) for \(i \neq j\).
\end{lemma}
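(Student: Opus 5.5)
The plan is to translate everything into Gram matrices and reduce the statement to a sign property of Schur complements. Let \(A=\bigl((\alpha_i,\alpha_j)\bigr)_{i,j}\) be the Gram matrix of the \(\alpha_i\). It is symmetric positive definite, since the \(\alpha_i\) form a basis, and by hypothesis its off-diagonal entries are \(\le 0\). Writing \(v_j=\sum_l c_{jl}\alpha_l\), the duality \((\alpha_i,v_j)=\delta_{ij}\) gives \(CA=I\). Hence the Gram matrix of the \(v_j\) is \(CAC^{T}=A^{-1}\).

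Similarly, for the \(\beta_i\) in \(V'\), let \(G'\) be the Gram matrix of \(v_1,\ldots,v_k\). This is the upper-left \(k\times k\) principal block of \(A^{-1}\). The same computation inside \(V'\) shows that the Gram matrix of \(\beta_1,\ldots,\beta_k\) is \((G')^{-1}\). So the statement is equivalent to the claim that the inverse of the leading \(k\times k\) block of \(A^{-1}\) has nonpositive off-diagonal entries.

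Write \(A=\smat2 2[A_{11},A_{12};A_{21},A_{22}]\) in block form with \(A_{11}\) of size \(k\times k\). The standard block-inverse (Schur complement) formula gives
\[
(G')^{-1}=A_{11}-A_{12}A_{22}^{-1}A_{21}.
\]
The off-diagonal entries of \(A_{11}\) are \(\le0\). The blocks \(A_{12}\) and \(A_{21}=A_{12}^{T}\) are entrywise \(\le0\). Therefore it suffices to show that \(A_{22}^{-1}\) is entrywise \(\ge0\). Then \(A_{12}A_{22}^{-1}A_{21}\) is entrywise \(\ge0\), and subtracting it can only decrease the off-diagonal entries. (If \(k=n\) there is nothing to prove.)

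The key step, and the only real obstacle, is this lemma: a symmetric positive definite matrix \(M\) whose off-diagonal entries are \(\le0\) has an entrywise nonnegative inverse. Note that \(A_{22}\) is itself such a matrix, being a principal submatrix of \(A\).

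To prove the lemma, choose \(s>\lambda_{\max}(M)\) and write \(M=s(I-B)\) with \(B=I-M/s\). The matrix \(B\) is entrywise \(\ge0\), because its diagonal entries \(1-M_{ii}/s\) are positive and its off-diagonal entries \(-M_{ij}/s\) are \(\ge 0\). Since \(B\) is symmetric with eigenvalues \(1-\lambda/s\in(0,1)\), its spectral radius is \(<1\). The Neumann series \(M^{-1}=s^{-1}\sum_{m\ge0}B^m\) therefore converges, and every term is entrywise nonnegative, so \(M^{-1}\ge0\). This completes the argument.
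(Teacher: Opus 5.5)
Your proof is correct, but it takes a different route from the paper. The paper only treats the codimension-one case \(k=n-1\). There \(V'=\alpha_n^\perp\), and the dual basis is explicit: \(\beta_i=\alpha_i-\frac{(\alpha_i,\alpha_n)}{(\alpha_n,\alpha_n)}\alpha_n\). Hence \((\beta_i,\beta_j)=(\alpha_i,\alpha_j)-\frac{(\alpha_i,\alpha_n)(\alpha_j,\alpha_n)}{(\alpha_n,\alpha_n)}\le(\alpha_i,\alpha_j)\le 0\). The general case then follows by induction on \(n-k\). This implicitly uses that \(v_1,\dots,v_{n-1}\) lie in \(\alpha_n^\perp\) and are dual to the \(\beta_i\) there, so the hypotheses reproduce themselves.

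In your Gram-matrix language, this is exactly your Schur-complement formula with the \(1\times1\) block \(A_{22}=(\alpha_n,\alpha_n)\). Its inverse is trivially positive, so the paper never needs your key lemma on inverse-positivity. By the quotient property of Schur complements, iterating the paper's rank-one step \(n-k\) times produces your \(A_{11}-A_{12}A_{22}^{-1}A_{21}\).

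Your one-shot version trades the induction for the Stieltjes-matrix fact that \(A_{22}^{-1}\) is entrywise nonnegative, which you prove correctly via the Neumann series. In return you get a closed formula for the Gram matrix of the \(\beta_i\). Applying the same lemma to \(A\) itself also gives the by-product \((v_i,v_j)\ge 0\) for all \(i,j\).
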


\begin{proof}
The result holds for \(k = n\) by assumption. We will show that it holds for \(k = n-1\). Then the general case follows by induction.

Since \(k = n-1\), \(V'\) is simply \(\alpha_n^\perp\). One easily checks that
\[
\beta_i = \alpha_i - \frac{(\alpha_i, \alpha_n)}{(\alpha_n, \alpha_n)}\alpha_n, \qquad i = 1, \ldots, k.
\]
Thus,
\[
(\beta_i, \beta_j) = (\alpha_i, \alpha_j) - \frac{(\alpha_i, \alpha_n)(\alpha_j, \alpha_n)}{(\alpha_n, \alpha_n)} \leq (\alpha_i, \alpha_j) \leq 0
\]
for \(i \neq j\).
\end{proof}

We now extend the same convexity criterion to the affine setting.

\begin{lemma} \label{opp-convex-aff}
Let \(A\) be an apartment of an irreducible affine Coxeter complex, and let
\(F,F'\) be \(k\)-facets sharing a common \((k-1)\)-subfacet \(F''\). The following are equivalent:
\begin{itemize}
    \item[\textup{(i)}] \(F, F'\) are opposite facets on the link of \(F''\).
    \item[\textup{(ii)}] \(\overline{F} \cup \overline{F}'\) is convex in \(A\).
    \item[\textup{(iii)}] There exists a \(k\)-dimensional affine subspace of \(A\) containing \(F\) and \(F'\).
\end{itemize}
\end{lemma}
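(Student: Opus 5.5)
We reduce to the spherical case, Lemma~\ref{opp-convex-sph}, by passing to the local picture at an interior point of \(F''\). Fix a point \(x\) in the relative interior of \(F''\). Let \(W_x\) be the stabilizer of \(x\) in the affine Weyl group, i.e.\ the group generated by reflections in the walls of \(A\) through \(x\). It is a finite reflection group acting on the tangent space \(V=T_xA\), though possibly not essential. The walls through \(x\) are exactly the walls containing \(F''\), so the star of \(F''\) in \(A\), read near \(x\), is the cone decomposition of \(V\) given by \(W_x\). Quotienting by the fixed subspace \(V^{W_x}=\operatorname{span}(F''-x)\), which has dimension \(k-1\), makes the action essential. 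The resulting spherical Coxeter complex is precisely \(\operatorname{Lk}_A(F'')\).

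Under this identification, the cones \(x+\mathbb R_{\ge0}(\overline F-x)\) and \(x+\mathbb R_{\ge0}(\overline{F'}-x)\) become two \(k\)-facets \(\Phi,\Phi'\) of the Coxeter complex of \(W_x\) on \(V\), not essentialized. They share the \((k-1)\)-dimensional face \(\Phi''\), the cone on \(F''\), which here plays the role of a common codimension-one face. After quotienting by \(V^{W_x}\), they become vertices of the link, and the shared face becomes the minimal cone. Applying Lemma~\ref{opp-convex-sph} in the quotient, with the quotient picture of \(\Phi,\Phi'\) as a pair of \(1\)-dimensional faces sharing the origin, the three conditions there correspond to ours as follows.

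\textbf{(i)\(\Leftrightarrow\)(iii).} Opposition of \(F,F'\) in \(\operatorname{Lk}(F'')\) means the two rays in \(V/V^{W_x}\) are opposite. This holds if and only if \(F\cup F'\) lies in the affine span of \(F''\) together with one line through \(x\), which is a \(k\)-dimensional affine subspace. Conversely, a \(k\)-dimensional affine subspace containing \(F\) and \(F'\) must contain \(\operatorname{aff}(F'')\) and hence project to a line in the quotient. These are the affine analogues of the equivalences in Lemma~\ref{opp-convex-sph}, with linear spans shifted to affine spans through \(x\).

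\textbf{(ii)\(\Rightarrow\)(i).} If \(\overline F\cup\overline{F'}\) is convex, then segments joining points near \(x\) on the two facets stay in the union. Hence the cone union \(\Phi\cup\Phi'\) is locally convex at \(x\), and therefore convex, since cones are convex exactly when they are locally convex at the apex. By Lemma~\ref{opp-convex-sph} applied to the quotient, the two rays are then opposite.

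\textbf{(i)\(\Rightarrow\)(ii).} This is the main obstacle, because convexity of the cones is only local. The plan is to use (iii): \(\overline F\cup\overline{F'}\) lies in a \(k\)-dimensional affine subspace \(E\), where both are \(k\)-simplices meeting along the common facet \(\overline{F''}\), which lies in a hyperplane \(H\) of \(E\), and they sit on opposite sides of \(H\). Two simplices in \(E\) sharing a facet and lying on opposite sides of it form a convex union exactly when each vertex \(v,\hat v\) projects orthogonally (within the relevant direction) so that the segment \([v,\hat v]\) meets \(\overline{F''}\). It suffices to check that \([v,\hat v]\cap H\in\overline{F''}\). Then any segment from \(\overline F\) to \(\overline{F'}\) crosses \(H\) inside the convex hull of \(\overline{F''}\) and the crossing points, exactly as in the final paragraph of the proof of Lemma~\ref{opp-convex-sph}.

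The projection property there, which rests on the inequality \((\alpha_i,\alpha_j)\le0\) and the subsequent lemma on dual bases, is proved for the cone at \(x\). The key step is to note that it also holds for affine simplices of an alcove: the vertices of \(F\) are vertices of some alcove, whose walls have inward normals with nonpositive pairwise inner products (the affine simple roots). So the same computation, applied to the affine simple roots and restricted via the dual-basis lemma to the span of \(F\), shows that the projection of \(v\) onto \(\operatorname{aff}(F'')\) lies in \(\overline{F''}\), and likewise for \(\hat v\). Since the two vertices are on opposite sides of \(H\) in \(E\), the segment argument from Lemma~\ref{opp-convex-sph} then gives convexity in \(E\), hence in \(A\).
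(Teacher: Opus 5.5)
Your proof is correct. For (i)\(\Leftrightarrow\)(iii) and (ii)\(\Rightarrow\)(i) it is essentially the paper's argument:
\begin{itemize}
\item localizing at a relative interior point \(x\) of \(F''\) and dividing by \(\operatorname{span}(F''-x)\) is the paper's normal quotient by the direction space of \(H=\operatorname{aff}(F'')\);
\item your homothety argument for the cones is a form of its transverse-segment argument.
\end{itemize}
The last step of (i)\(\Rightarrow\)(ii) is also the same. Once both projections \(p,p'\) lie in \(\overline{F''}\), the point \([v,\hat v]\cap H\) is a convex combination of \(p\) and \(p'\). Your sentence stating the convexity criterion is garbled, but this is the version you actually use.

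The genuine difference is how you prove \(p\in\overline{F''}\).
\begin{itemize}
\item The paper localizes at each vertex \(v_i\) of \(F''\) in turn. In the link of \(v_i\) the simple roots form a basis, so the dual-basis lemma applies verbatim. The spherical projection property then puts \(p\) in the cone \(C_i\) at \(v_i\), and the paper concludes using \(\bigcap_i C_i=\overline{F''}\).
\item You instead make a single computation on an alcove containing \(F\), using that its inward wall normals have nonpositive Gram matrix. This exhibits the fact as the non-obtuseness of faces of a Coxeter alcove and avoids intersecting cones.
\end{itemize}

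The cost is that the dual-basis lemma cannot be quoted as stated. The \(\ell+1\) affine simple roots (\(\ell=\dim A\)) are linearly dependent and have no dual basis. What you need is its one-step identity (writing \(\alpha_j\) also for the gradients):
\begin{equation*}
(\beta_i,\beta_j)=(\alpha_i,\alpha_j)-\frac{(\alpha_i,\alpha_m)(\alpha_j,\alpha_m)}{(\alpha_m,\alpha_m)}\le(\alpha_i,\alpha_j)\le0 .
\end{equation*}
Label the alcove vertices \(u_j\), with \(u_j\) opposite the wall \(\alpha_j=0\), and let \(J\) index those lying in \(F\). Iterate the identity over the walls \(\alpha_m=0\), \(m\notin J\), that cut \(\operatorname{aff}(F)\) out of the alcove. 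This only requires the removed normals to be linearly independent, which holds because any \(\ell\) of the affine simple roots of an irreducible system are independent.

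Now take \(v=u_{j_0}\) and \(p=v-\alpha_{j_0}(v)\beta_{j_0}/|\beta_{j_0}|^2\). Then \(p\in\operatorname{aff}(F)\) and \(\alpha_{j_0}(p)=0\). For \(j\in J\setminus\{j_0\}\) you get \(\alpha_j(p)=-\alpha_{j_0}(v)(\beta_j,\beta_{j_0})/|\beta_{j_0}|^2\ge0\), so \(p\in\overline{F''}\).

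With this spelled out, your route is complete and somewhat more direct than the paper's. The paper's route has the compensating advantage that every ingredient is used exactly in the form already proved.
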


\begin{proof}
Let \(H=\operatorname{aff}(F'')\).  In the normal quotient by the direction
space of \(H\), the two facets determine rays.  Opposition in the link means
that these rays are opposite, which is equivalent to the existence of a
\(k\)-dimensional affine subspace containing both facets.  This proves
\textup{(i)}\(\Leftrightarrow\)\textup{(iii)}, and the same
transverse-segment argument as in Lemma~\ref{opp-convex-sph} gives
\textup{(ii)}\(\Rightarrow\)\textup{(iii)}.

For the converse, let \(v_0,\ldots,v_k\) be the vertices of \(F\), with
\(F''\) spanned by \(v_1,\ldots,v_k\), and let \(p\) be the orthogonal
projection of \(v_0\) to \(H\).  For each \(i=1,\ldots,k\), the stabilizer
\(W_{v_i}\) of \(v_i\) in the affine Weyl group is a finite reflection group
acting on the link of \(v_i\).  After translating \(v_i\) to the origin, the
vector \(p-v_i\) is the orthogonal projection of \(v_0-v_i\) onto
\(H-v_i\).  The spherical projection argument of
Lemma~\ref{opp-convex-sph}, applied in this link, therefore shows that \(p\)
lies in the convex cone \(C_i\) at \(v_i\) generated by the rays from \(v_i\)
toward the other vertices of \(F''\).  Since
\[
\bigcap_{i=1}^k C_i=\operatorname{conv}(v_1,\ldots,v_k),
\]
we have \(p\in\overline{F''}\).  The displayed equality is elementary: in
barycentric coordinates \(t_1,\ldots,t_k\) on \(H\), the cone \(C_i\) is
characterized by \(t_j\geq0\) for \(j\neq i\), and their intersection is
therefore characterized by \(t_j\geq0\) for all \(j\).  The same holds for the
projection of the extra vertex of \(F'\).  Under condition (i) the two extra
vertices lie on opposite sides of \(H\), so the final segment-decomposition
argument in the proof of Lemma~\ref{opp-convex-sph}, with cones replaced by
simplices, proves that \(\overline F\cup\overline{F'}\) is convex.
\end{proof}

\subsection{A Standard Model for Geometric Geodesics in an Apartment}\label{s:apt}

We work in an explicit affine apartment model in order to construct the standard
geometric geodesic and verify its convexity directly.

We model the apartment \(A\) as the quotient vector space
\(\mathbb{R}^n / \Delta \mathbb{R}\), where
\(\Delta : \mathbb{R} \to \mathbb{R}^n\) denotes the diagonal embedding
\(\Delta(t) = (t,\ldots,t)\).  Viewed in this way, \(A\) is an affine space.
For any distinct indices \(i,j\) with \(1 \le i \ne j \le n\), the function
\[
(x_1,\ldots,x_n)+\Delta\mathbb{R} \longmapsto x_i - x_j
\]
is well-defined on \(A\); we denote it by \(x_i - x_j\).

Within this framework, the affine root system is
\[
\{\, x_i - x_j + c \mid 1 \le i \ne j \le n,\ c \in \mathbb{Z} \,\}.
\]
A choice of simple affine roots is given by
\[
x_n - x_1 + 1,\; x_1 - x_2,\; \ldots,\; x_{n-1} - x_n.
\]
Let \(v_0,\ldots,v_{n-1}\) be the corresponding vertices, where
\[
v_i=(\underbrace{1,\ldots,1}_{i},0,\ldots,0)+\Delta\mathbb{R}.
\]

Fix an ordered partition \((\lambda_0,\ldots,\lambda_k)\) of \(n\), and set
\(\Lambda_i = \sum_{j=0}^{i-1}\lambda_j\) with \(\Lambda_0=0\).
We define a \(k\)-facet \(C\) with vertices
\[
w_i=v_{\Lambda_i}, \qquad i=0,\ldots,k.
\]
This facet lies in a \(k\)-dimensional affine subspace \(A'\subset A\),
determined by the equalities \(x_i-x_{i+1}=0\) for all
\(i\notin\{\Lambda_1,\ldots,\Lambda_k\}\).

To parametrize points in \(A'\), we introduce coordinates
\[
y_i=x_{\Lambda_i}-x_n, \qquad i=1,\ldots,k.
\]
With this notation, the vertex \(w_i\) has coordinates
\[
(y_1,\ldots,y_k)(w_i)
=
(\underbrace{1,\ldots,1}_{i},\underbrace{0,\ldots,0}_{k-i}).
\]

We now extend this construction to an infinite sequence of vertices.
For each \(i\in\mathbb{Z}\), define \(w_i\in A'\) by
\[
(y_1,\ldots,y_k)(w_i)
=
\left(
\left\lceil \tfrac{i}{k} \right\rceil,
\ldots,
\left\lceil \tfrac{i-(k-1)}{k} \right\rceil
\right).
\]
Equivalently, \(w_{i+k}=w_i+w_k\) for all \(i\).  For each \(i\in\mathbb{Z}\),
let \(C_i\) denote the \(k\)-facet with vertices \(w_i,\ldots,w_{i+k}\).
The sequence \((C_i)_{i\in\mathbb{Z}}\) should be viewed as a candidate
\emph{standard geometric geodesic} in the apartment.

To describe these facets uniformly, we extend the functions \(y_i\) to all
\(i\in\mathbb{Z}\) by imposing the rule \(y_{i+k}=y_i-1\).
Then \(\overline{C}_i\) is characterized by the inequalities
\[
1 \ge y_{i+1} \ge \cdots \ge y_{i+k} \ge 0.
\]
In particular, \(\overline{C}_i\) lies in the region
\[
\grC = \{\, y_j \ge y_{j+1} \mid j\in\mathbb{Z} \,\} \subset A'.
\]

We further define two auxiliary convex sets inside \(\grC\):
\[
\overline{C}_i^+ = \{\, y_{i+k} \ge 0 \,\}, \qquad
\overline{C}_i^- = \{\, 1 \ge y_{i+1} \,\}.
\]

The next lemma describes how the facets \(C_i\) decompose these regions and how
they fit together along the geodesic.

\begin{lemma}
For any \(i\in\mathbb{Z}\),
\[
C_i \subset
\overline{C}_i^+ \setminus \overline{C}_{i+1}^+
\subset \overline{C}_i,
\qquad
C_i \subset
\overline{C}_i^- \setminus \overline{C}_{i-1}^-
\subset \overline{C}_i.
\]
Moreover,
\[
\overline{C}_i^+ = \bigcup_{j\ge i}\overline{C}_j,\qquad
\overline{C}_i^- = \bigcup_{j\le i}\overline{C}_j,\qquad
\grC = \bigcup_{j\in\mathbb{Z}}\overline{C}_j.
\]
\end{lemma}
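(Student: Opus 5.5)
The plan is to reduce everything to elementary manipulations of the extended coordinates \(y_j\), \(j\in\mathbb Z\), using the rule \(y_{j+k}=y_j-1\). First I will record two facts about the region \(\grC\).

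Because of periodicity, the infinitely many conditions \(y_j\ge y_{j+1}\) reduce to finitely many: \(\grC\) is cut out by \(y_1\ge y_2\ge\cdots\ge y_k\ge y_{k+1}=y_1-1\). Moreover, for a point of \(\grC\) the sequence \(j\mapsto y_j\) is nonincreasing. Since \(y_{j+mk}=y_j-m\), it tends to \(-\infty\) as \(j\to+\infty\) and to \(+\infty\) as \(j\to-\infty\).

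Next I will check that \(\overline C_i\subset\grC\). The defining inequalities \(1\ge y_{i+1}\ge\cdots\ge y_{i+k}\ge0\) give monotonicity on the window \(i+1,\dots,i+k\). They also give \(y_{i+k}\ge 0\ge y_{i+1}-1=y_{i+k+1}\), which closes the cycle. By periodicity, all the inequalities \(y_j\ge y_{j+1}\) then hold. In particular, \(\overline C_i\subset \overline C_i^+\cap\overline C_i^-\).

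For the first chain of inclusions, take a point of \(\overline C_i^+\setminus\overline C_{i+1}^+\). It satisfies \(y_{i+k}\ge0\) and \(y_{i+k+1}<0\). The second condition says \(y_{i+1}-1<0\), that is, \(y_{i+1}<1\). Together with the monotonicity in \(\grC\), this gives \(1>y_{i+1}\ge\cdots\ge y_{i+k}\ge0\), hence the point lies in \(\overline C_i\). Conversely, the open facet \(C_i\) is given by the strict inequalities \(1>y_{i+1}>\cdots>y_{i+k}>0\). Its points therefore lie in \(\overline C_i^+\) but not in \(\overline C_{i+1}^+\). The statement for \(\overline C_i^-\setminus\overline C_{i-1}^-\) is symmetric. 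There the relevant pair of conditions is \(y_{i+1}\le 1\) together with \(y_i>1\), and \(y_i>1\) is the same as \(y_{i+k}>0\).

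For the union formulas, first note that \(j\ge i\) and monotonicity give \(\overline C_j\subset\overline C_j^+\subset\overline C_i^+\). For the reverse inclusion, let \(x\in\overline C_i^+\). The set of \(j\ge i\) with \(y_{j+k}(x)\ge0\) contains \(i\) and is bounded above, because \(y\to-\infty\). Let \(j\) be its maximum. Then \(x\in\overline C_j^+\setminus\overline C_{j+1}^+\subset\overline C_j\). The minus case is identical, taking a minimum instead.

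Finally, let \(x\in\grC\) be arbitrary. Since \(y_j\to+\infty\) as \(j\to-\infty\), some \(i\) satisfies \(y_{i+k}(x)\ge0\). Then \(x\in\overline C_i^+\), which is a union of the \(\overline C_j\). This gives \(\grC=\bigcup_j\overline C_j\).

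The only step needing care is the translation of the boundary conditions through \(y_{j+k}=y_j-1\), namely the equivalences \(y_{i+k+1}<0\iff y_{i+1}<1\) and \(y_i>1\iff y_{i+k}>0\). Everything else is bookkeeping with a monotone sequence.
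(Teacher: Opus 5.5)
Your proposal is correct and follows essentially the same route as the paper: you use the same translation of the boundary conditions via \(y_{j+k}=y_j-1\) to get both chains of inclusions, and you get the union formulas from the fact that a point eventually leaves \(\overline C_j^+\) because \(y_j\to-\infty\). Taking the maximal \(j\) with \(y_{j+k}(x)\ge 0\) is a compact replacement for the paper's induction on \(\overline C_i^+\setminus\overline C_{i+s}^+\), and your explicit check that \(\overline C_i\subset\grC\) supplies a step the paper only asserts.
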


\begin{proof}
First we prove the local difference statement.
The defining inequalities of \(C_i\) include those defining
\(\overline{C}_i^+\), so \(C_i\subset \overline{C}_i^+\).  Since
\(1>y_{i+1}\) is a defining condition of \(C_i\) and
\(y_{i+1}-1=y_{i+k+1}\geq 0\) is a defining condition of
\(\overline{C}_{i+1}^+\), \(C_i\) is disjoint from
\(\overline{C}_{i+1}^+\).  We have proved: \(C_i
\subset\overline{C}_i^+\setminus \overline{C}_{i+1}^+\).

Consider a point in \(\overline{C}_i^+\setminus
\overline{C}_{i+1}^+\).  It has to satisfy \(y_{i+k} \geq 0\) and
\(y_{i+k+1} < 0\).  The latter inequality is equivalent to \(1>
y_{i+1}\).  Therefore, the point lies in \(\overline{C}_i\).  We have
proved: \(\overline{C}_i^+\setminus
\overline{C}_{i+1}^+\subset\overline{C}_i\).  By induction we have 
\[
\overline{C}_i^+\setminus \overline{C}_{i+s}^+\subset \overline{C}_i\cup\cdots\cup\overline{C}_{i+s-1}
\]
for any \(s \geq 1\).

We now pass from the local difference statement to the infinite union.
Consider an arbitrary point \(p\) of \(\overline{C}_i^+\).  Then
\(y_0(p) <N\) for any integer \(N\) large enough.  Therefore,
\(y_{Nk}(p) = y_0(p)-N<0\) so \(p \notin \overline{C}_j^+\) for \(j\) large
enough, \(j > i\).  The preceding paragraph implies \(p \in \bigcup_{j
  \geq i} \overline{C}_j\).  This shows \(\overline{C}_i^+ \subset
\bigcup_{j\geq i}\overline{C}_j\).  This inclusion is an equality
because each \(\overline{C}_j\) with \(j\ge i\) is contained in
\(\overline{C}_i^+\).

The statements regarding \(\overline{C}_i^-\) are proved similarly.
The equality \(\grC=\bigcup_{j\in \bbZ}\overline{C}_j\) follows from
the argument of the last paragraph.
\end{proof}

\begin{remark}[The standard geodesic is a straight strip]
Let \(e=(1,\ldots,1)\) in the \(y\)-coordinates on \(A'\), and put
\[
d_i=y_i-y_{i+1}\quad(1\le i<k),
\qquad
d_k=y_k-y_1+1.
\]
The defining inequalities of \(\grC\) are equivalent to
\(d_i\ge0\) for all \(i\), while
\(
d_1+\cdots+d_k=1
\).
They are invariant under translation in the direction \(e\).  If
\(e^\perp\) denotes the orthogonal complement with respect to the Euclidean
metric on the apartment, then
\[
S:=\grC\cap e^\perp
\]
is a Euclidean \((k-1)\)-simplex, and orthogonal projection gives an
isometric splitting
\[
\grC\simeq S\times\mathbb R.
\]
Together with the preceding lemma, this says that the bi-infinite standard
geometric \(k\)-geodesic is a straight simplicial strip paved by the facets
\(C_i\).  The finite geodesics below are its consecutive convex segments.
\end{remark}

We now show that this combinatorial chain of facets indeed behaves as a genuine
geometric geodesic, in the sense that convexity agrees with taking consecutive
unions.

\begin{theorem}\label{thm:standard-geodesic}
For any integers \(i \le j\), the convex hull of
\(\overline{C}_i \cup \overline{C}_j\) is
\[
\overline{C}_i^+ \cap \overline{C}_j^-
=
\bigcup_{i\le s\le j}\overline{C}_s.
\]
In particular, the sequence of facets
\(
(C_i, C_{i+1}, \ldots, C_j)
\)
forms a geometric geodesic.
\end{theorem}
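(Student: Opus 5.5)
The plan is to prove the two inclusions
\[
\mathrm{conv}(\overline{C}_i\cup\overline{C}_j)\subset \overline{C}_i^+\cap\overline{C}_j^-
\quad\text{and}\quad
\overline{C}_i^+\cap\overline{C}_j^-\subset \bigcup_{i\le s\le j}\overline{C}_s\subset \mathrm{conv}(\overline{C}_i\cup\overline{C}_j).
\]
Together with the trivial inclusion of the union into the convex hull, these close the circle. First I reduce convexity in the building to convexity in the apartment. The apartment \(A\) is a convex subset of the CAT(0) realization of \(\scrB\), and the affine subspace \(A'\) is convex in \(A\). The CAT(0) metric restricted to an apartment is Euclidean, so the convex hull of \(\overline{C}_i\cup\overline{C}_j\) in \(\scrB\) is the Euclidean convex hull inside \(A'\). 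This uses that geodesics in \(\scrB\) between points of \(A\) stay in \(A\), which is a standard fact from \cite{AB}*{11}.

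For the first inclusion, note that \(\grC\), \(\overline{C}_i^+\) and \(\overline{C}_j^-\) are all cut out by affine inequalities in the coordinates \(y\). The relation \(y_{m+k}=y_m-1\) keeps every \(y_m\) affine, so only finitely many distinct inequalities occur modulo this shift. Hence \(\overline{C}_i^+\cap\overline{C}_j^-\) is convex. It contains \(\overline{C}_i\) and \(\overline{C}_j\): the first lemma gives \(\overline{C}_i^+=\bigcup_{s\ge i}\overline{C}_s\ni\overline{C}_i,\overline{C}_j\) since \(j\ge i\), and similarly \(\overline{C}_j^-\) contains both. So the convex hull lies in the intersection.

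For the second inclusion, take \(p\in\overline{C}_i^+\cap\overline{C}_j^-\). Since \(p\in\overline{C}_i^+\), the lemma gives \(p\in\overline{C}_s\) for some \(s\ge i\). Pick the smallest such \(s\), or argue directly, to show we may take \(s\le j\). If \(p\in\overline{C}_s\) with \(s>j\), then \(y_{s+k}(p)\ge0\), and by monotonicity along \(\grC\) this gives \(y_{j+k+1}(p)\ge0\), that is, \(y_{j+1}(p)\ge1\). Combined with \(1\ge y_{j+1}(p)\) from \(\overline{C}_j^-\), we get \(y_{j+1}(p)=1\), so \(y_{j+k+1}(p)=0\). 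The chain \(y_{j+1}\ge\cdots\ge y_{j+k}\ge y_{j+k+1}=0\) with \(y_{j+1}=1\) then places \(p\) in \(\overline{C}_j\). Either way \(p\) lies in \(\bigcup_{i\le s\le j}\overline{C}_s\).

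For the remaining inclusion, that each \(\overline{C}_s\) with \(i\le s\le j\) lies in the convex hull, I would use the vertices. Every vertex \(w_m\) with \(i\le m\le j+k\) lies on the segment from a vertex of \(C_i\) to a vertex of \(C_j\), because the sequence \(w_m\) is "arithmetic" along each residue class mod \(k\) (\(w_{m+k}=w_m+w_k\)). Concretely, \(w_m\) is a convex combination of \(w_{m-ak}\) and \(w_{m+bk}\) lying in \(C_i\) and \(C_j\) respectively. So all vertices of \(C_s\) lie in the hull, and hence \(\overline{C}_s\) does too.

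The last assertion follows by applying the equality to every pair \(i\le i'\le j'\le j\).

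I expect the main obstacle to be the reduction from the CAT(0) convex hull in \(\scrB\) to the Euclidean hull in \(A'\). The Euclidean part is just bookkeeping with the inequalities in \(y\).
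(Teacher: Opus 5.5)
Your proof is correct and follows essentially the paper's route: convexity of \(\overline{C}_i^+\cap\overline{C}_j^-\) gives one inclusion, and the relation \(w_{m+k}=w_m+w_k\) puts the intermediate vertices on segments joining vertices of \(C_i\) and \(C_j\), which gives the other. The differences are minor. You place all vertices \(w_i,\dots,w_{j+k}\) in the hull at once, whereas the paper inducts on \(j-i\) by adding \(w_{j-1}\). You also spell out two steps the paper leaves implicit: the monotonicity argument for \(\overline{C}_i^+\cap\overline{C}_j^-\subset\bigcup_{i\le s\le j}\overline{C}_s\), and the reduction from the CAT(0) hull in \(\scrB\) to the Euclidean hull in the apartment.
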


\begin{proof}
The equality
\(\overline{C}^+_i\cap \overline{C}^-_j =
  \bigcup_{i\leq s\leq j}\overline{C}_s\)
follows by intersecting the two union descriptions in the preceding lemma.
Both \(\overline{C}^+_i\) and \(\overline{C}^-_j\) are convex, hence their
intersection is a convex set containing \(\overline{C}_i\) and
\(\overline{C}_j\), so it
contains the convex hull \(H\) of \(\overline{C}_i\cup
\overline{C}_j\).  To prove
the other inclusion, we do an induction on \(j-i\).  The result is
clear when \(j-i=0\).  When \(j-i=1\), the preceding lemma gives
\[
\overline{C}_i\cup\overline{C}_{i+1}
=\overline{C}_i^+\cap \overline{C}_{i+1}^-,
\]
which is convex; hence it is exactly the convex hull of
\(\overline{C}_i\cup\overline{C}_{i+1}\).

To prove minimality of this convex set, we show that every convex set
containing the endpoints also contains each intermediate facet.
Since \(H\) contains \(\overline{C}_j\), it contains \(w_j,\ldots,w_{j+k}\).  We claim that it also
contains \(w_{j-1}\).  Indeed, let \(l\) be the unique integer such that
\(l\equiv j-1\pmod{k}\) and \(i\leq l<i+k\).  Then \(w_l\in
\overline{C}_i\).  Write \(j-1=l+rk\) with \(r\ge0\).  Since
\(w_{a+k}=w_a+w_k\), we have
\[
w_{j-1}=w_l+r w_k,\qquad
w_{j+k-1}=w_l+(r+1)w_k.
\]
Thus \(w_{j-1}\) lies on the line segment joining \(w_l\) and
\(w_{j+k-1}\), with both endpoints lying on \(H\).
Therefore, \(w_{j-1} \in H\).  Since \(H\) now contains all the vertices
from \(w_{j-1}\) through \(w_{j+k-1}\), it contains
\(\overline{C}_{j-1}\).  By the induction hypothesis it then contains
\[
\bigcup_{i\leq s\leq j-1}\overline{C}_s,
\]
and hence also \(\bigcup_{i\leq s\leq j}\overline{C}_s\).  This completes the
proof.
\end{proof}

\subsection{Normal Forms of Geometric Geodesics} \label{s:normal-forms}

We now analyze how a given \(k\)-facet in an apartment can be extended to larger
convex configurations.  The goal of this subsection is to understand which local
extensions are compatible with convexity, and to show that these constraints
force any geometric geodesic to follow a unique global pattern.

Fix the standard \(k\)-facet \(C=C_0\subset A\) introduced in
Section~\ref{s:apt}.  Consider the \(k\)-facets \(C'\subset A\) adjacent to
\(C\), i.e.\ sharing a \((k-1)\)-subfacet with \(C\), such that
\(\overline{C}\cup\overline{C'}\) is convex.  By
Lemma~\ref{opp-convex-aff}, there are exactly \(k+1\) such facets.  Each arises by
choosing a \((k-1)\)-facet \(D\) of \(C\) and taking the facet in the link of \(D\)
that is opposite to \(C\).

For \(i=0,\ldots,k\), let \(C^{(i)}\) denote the facet obtained by using the
\((k-1)\)-facet of \(C\) whose closure does not contain the vertex \(w_i\).
Explicitly, \(C^{(i)}\) has vertices
\[
\{w_i'\}\cup\{w_0,\ldots,w_k\}\setminus\{w_i\},
\]
where \(w_i'\in A'\) is given in the \(y\)-coordinates by
\[
(y_1,\ldots,y_k)(w_i')=
\begin{cases}
(2,1,\ldots,1), & i=0,\\[2pt]
(\underbrace{1,\ldots,1}_{i-1},0,1,0,\ldots,0), & 1\le i\le k-1,\\[2pt]
(0,\ldots,0,-1), & i=k.
\end{cases}
\]
Note that \(C^{(0)} =C_1\) and \(C^{(k)}=C_{-1}\).

We first classify the possible convex extensions of a fixed adjacent pair of
\(k\)-facets.  Since any geometric geodesic of length \(2\) can, after
rechoosing the standard apartment and applying an affine Weyl group symmetry,
be identified with the adjacent pair \((C_{-1},C_0)\), it suffices to analyze
extensions of this standard configuration.

\begin{lemma}\label{lem:geo-3}
Let \(C'\) be a \(k\)-facet in \(A\).  Suppose the triple of \(k\)-facets
\[
(C_{-1},\,C_0,\,C')
\]
forms a geometric geodesic in the apartment \(A\).
Then the third facet \(C'\) must be one of the two facets
\[
C' = C^{(0)} (= C_1) \quad\text{or}\quad C' = C^{(k-1)}.
\]
\end{lemma}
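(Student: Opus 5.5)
The plan is to combine the classification of convex neighbors from Lemma~\ref{opp-convex-aff} with an explicit segment that leaves the union. Since \((C_{-1},C_0,C')\) is a geometric geodesic, taking \(j=i+1\) in the definition shows that \(\overline{C}_0\cup\overline{C'}\) is convex. By Lemma~\ref{opp-convex-aff} and the discussion preceding the statement, \(C'\) must therefore be one of the \(k+1\) facets \(C^{(0)},\ldots,C^{(k)}\). The facets of a geometric geodesic are pairwise distinct, and \(C^{(k)}=C_{-1}\), so \(C'\neq C^{(k)}\). It remains to rule out \(C'=C^{(i)}\) for \(1\le i\le k-2\); for \(k\le 2\) there is nothing to rule out, since then \(C^{(k-1)}\) and \(C^{(0)}\) exhaust the remaining cases.

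Fix \(1\le i\le k-2\) and suppose \(C'=C^{(i)}\). The geodesic condition for the pair \((C_{-1},C')\) requires
\[
\mathrm{conv}\bigl(\overline{C}_{-1}\cup\overline{C'}\bigr)=\overline{C}_{-1}\cup\overline{C}_0\cup\overline{C'}.
\]
To contradict this, I would exhibit two points, one in \(\overline{C}_{-1}\) and one in \(\overline{C'}\), whose midpoint lies outside the right-hand side. The natural choice is the vertex \(w_{-1}\) of \(C_{-1}\) together with the new vertex \(w_i'\) of \(C^{(i)}\).

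In \(y\)-coordinates we have \(w_{-1}=(0,\ldots,0,-1)\), and \(w_i'\) has \(1\)'s in positions \(1,\ldots,i-1\), a \(0\) in position \(i\), a \(1\) in position \(i+1\), and \(0\)'s after that. Because \(i\le k-2\), position \(i+1\) is not the last one, so \(y_k(w_i')=0\). The midpoint \(m\) therefore has
\[
y_i(m)=0,\qquad y_{i+1}(m)=\tfrac12,\qquad y_k(m)=-\tfrac12 .
\]
I would then check three memberships.
\begin{itemize}
\item \(m\notin\grC\), because \(y_i(m)<y_{i+1}(m)\). Since \(\overline{C}_{-1}\) and \(\overline{C}_0\) both lie in \(\grC\), this excludes \(m\) from both of them.
\item Every vertex of \(C^{(i)}\) satisfies \(y_k\ge0\): the \(w_j\) with \(0\le j\le k\), \(j\ne i\), have \(y_k\in\{0,1\}\), and \(y_k(w_i')=0\).
\item Since \(y_k\) is affine, the closed simplex \(\overline{C^{(i)}}\) lies in \(\{y_k\ge0\}\), whereas \(y_k(m)=-\frac12\). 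Hence \(m\notin\overline{C'}\).
\end{itemize}
So \(m\) lies in the convex hull but not in the union, which is a contradiction.

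The main obstacle is choosing the witness segment, and the sign bookkeeping on \(y_k\) that makes the argument break down exactly when \(i=k-1\). There \(w_{k-1}'\) has \(y_k=1\), so the midpoint stays in the half-space \(y_k\ge0\); likewise \(i=0\) gives \(C_1\). This is consistent with the two survivors in the statement. Once the witness is chosen, the remaining checks are the short coordinate verifications above.
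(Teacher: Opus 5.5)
Your proposal is correct and follows the paper's proof essentially step for step. Like the paper, you reduce to \(C'=C^{(i)}\) with \(0\le i\le k-1\) using Lemma~\ref{opp-convex-aff} and pairwise distinctness, then rule out \(1\le i\le k-2\) with the midpoint of \(w_i'\) and \(w_k'=w_{-1}\), which satisfies \(y_k=-\tfrac12\) and \(y_i<y_{i+1}\). Excluding \(\overline{C}_{-1}\) and \(\overline{C}_0\) together via \(\grC\) only streamlines the paper's separate checks.
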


\begin{remark}
The two alternatives in the lemma are related by a symmetry of the local
configuration.  The affine transformation
\[
\tau(y_1,\ldots,y_k)
=
(1-y_k,y_1-y_k,\ldots,y_{k-1}-y_k)
\]
fixes \(C\) setwise and satisfies
\[
\tau(w_i)=w_{i+1}\quad(0\leq i<k),
\qquad
\tau(w_k)=w_0,
\]
and
\[
\tau(C^{(i)})=C^{(i+1)}
\qquad(0\leq i\leq k),
\]
where the superscripts in the last formula are taken modulo \(k+1\).
Consequently, \(\tau\) sends
\[
(C^{(k)},C,C^{(k-1)})
\quad\text{to}\quad
(C^{(0)},C,C^{(k)}),
\]
which is the reverse of \((C^{(k)},C,C^{(0)})\).  Thus the two possibilities
admitted by the lemma are equivalent up to this affine symmetry and reversal.
\end{remark}

\begin{proof}
Since \(C_0\cap C'\) is a \((k-1)\)-facet and
\(\overline{C_0}\cup\overline{C'}\) is convex, Lemma~\ref{opp-convex-aff} implies
that \(C'=C^{(i)}\) for some \(i\in\{0,\ldots,k\}\).  As \(C_{-1}\) is the facet
opposite to \(C_0\) across the \((k-1)\)-subfacet of \(C_0\) missing \(w_k\), we
may (and do) identify \(C_{-1}=C^{(k)}\).  The assumption that \(C'\) is distinct
from \(C_{-1}\) excludes \(i=k\).  Hence \(0\le i\le k-1\).

\smallskip
\noindent\emph{Step 1: the standard convex triple.}
For \(i=0\), we have \(C^{(0)}=C_1\), hence
\[
\overline{C_{-1}}\cup\overline{C_0}\cup\overline{C^{(0)}}
=\overline{C}_{-1}\cup\overline{C}_0\cup\overline{C}_1,
\]
which is convex by Theorem~\ref{thm:standard-geodesic}.  Thus \(i=0\) is always
allowed in the standard model.

\smallskip
\noindent\emph{Step 2: excluding the intermediate cases \(1\le i\le k-2\).}
Fix \(i\) with \(1\le i\le k-2\), and set
\[
U_i' := \overline{C_{-1}}\;\cup\;\overline{C_0}\;\cup\;\overline{C^{(i)}}
= \overline{C^{(k)}}\;\cup\;\overline{C}\;\cup\;\overline{C^{(i)}}.
\]
Let \(z\) be the midpoint of \(w_i'\) and \(w_k'\).  Then \(z\) lies on the line
segment joining \(w_i'\in\overline{C^{(i)}}\) and
\(w_k'\in\overline{C^{(k)}}=\overline{C_{-1}}\), so \(z\) lies in the convex hull
of \(U_i'\).  We claim \(z\notin U_i'\), which will show that the candidate
union is not convex.

Indeed, \(y_k(z)=-\tfrac12\), so \(z\notin\overline{C_0}=\overline{C}\), since
\(\overline{C}\) is defined by \(1\ge y_1\ge\cdots\ge y_k\ge 0\).
The same inequality shows \(z\notin\overline{C^{(i)}}\), since
\(\overline{C^{(i)}}\subset\{y_k\ge 0\}\) (equivalently, it satisfies a chain of
inequalities ending with \(y_k\ge 0\)).
Finally, \(z\notin\overline{C^{(k)}}=\overline{C_{-1}}\), since at \(z\) we have
\(y_i(z)=0\) and \(y_{i+1}(z)=\tfrac12\), which violates the monotonicity
conditions defining \(\overline{C^{(k)}}\).
Thus \(z\notin U_i'\), proving non-convexity for \(1\le i\le k-2\).

The remaining case \(i=k-1\) is admissible by the symmetry in the preceding
remark, since its triple is carried to the reverse of the convex triple in
Step~1.
\end{proof}
We next record the corresponding rigidity statement for four \(k\)-facets.
By the lemma and the preceding symmetry, any three consecutive facets of a
geometric geodesic can, after rechoosing the apartment and the standard
\(k\)-facet, be identified with one of the two oriented configurations
\[
(C_{0},\,C_{1},\,C_2)
\qquad\text{or}\qquad
(C_0,\,C_{-1},\,C_{-2}),
\]
where the displayed triple is a geometric geodesic of the type classified
in Lemma~\ref{lem:geo-3}.  These two configurations correspond to the two
possible orientations of a geometric geodesic.

In the following lemma, we fix one orientation and show that, once three
consecutive facets are fixed, the geodesic admits a unique continuation in that
direction.

\begin{lemma}[Rigidity of geometric extension]\label{lem:geo-4}
Let \(C'\) be a \(k\)-facet in \(A\).  Suppose the sequence of \(k\)-facets
\[
(C_{0},\,C_{1},\,C_2,\,C')
\]
forms a geometric geodesic in the apartment \(A\).
Then necessarily
\[
C' = C_3.
\]
In particular, once three consecutive facets of a geometric geodesic are fixed,
its continuation in the chosen orientation is uniquely determined.
\end{lemma}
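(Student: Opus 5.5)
The plan is to reduce to Lemma~\ref{lem:geo-3} using an index-shift symmetry of the standard strip, and then to exclude the one spurious alternative that this leaves.

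\emph{Setting up the shift.} Define an affine self-map \(\sigma\) of \(A'\) by \(y_j(\sigma(p))=y_{j-1}(p)\) for all \(j\in\mathbb Z\). This is consistent with the rule \(y_{j+k}=y_j-1\), and it is affine: in the basic coordinates it is \((y_1,\ldots,y_k)\mapsto(y_k+1,y_1,\ldots,y_{k-1})\). From the formula \(y_j(w_i)=\lceil (i-j+1)/k\rceil\) we get \(\sigma(w_i)=w_{i+1}\), and hence \(\sigma(C_i)=C_{i+1}\) for all \(i\). The map \(\sigma\) need not be an isometry, but affine maps preserve convexity and convex hulls, which is all that enters here.

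\emph{Confining \(C'\) to \(A'\).} Since \((C_2,C')\) is a geometric geodesic, \(\overline{C_2}\cup\overline{C'}\) is convex and the two facets share a \((k-1)\)-facet. Lemma~\ref{opp-convex-aff}(iii) then places \(C'\) in a \(k\)-dimensional affine subspace containing \(C_2\), and that subspace must be \(A'\). All facets involved lie in \(A'\), so convex hulls taken in \(A\) agree with those taken in \(A'\). The proof of Lemma~\ref{lem:geo-3} only tests convexity inside \(A'\), so it applies to any configuration in \(A'\) that is affinely equivalent to the standard one.

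\emph{Applying Lemma~\ref{lem:geo-3}.} The subsequence \((C_1,C_2,C')\) is a geometric geodesic, so \((C_{-1},C_0,\sigma^{-2}C')\) is one as well. Lemma~\ref{lem:geo-3} gives \(\sigma^{-2}C'\in\{C_1,\;C^{(k-1)}\}\), that is,
\[
C'\in\{C_3,\;\sigma^{2}C^{(k-1)}\}.
\]
For \(k=1\) we have \(C^{(k-1)}=C^{(0)}=C_1\), so there is nothing more to prove. For \(k\ge2\) the remaining task is to exclude \(C''=\sigma^2C^{(k-1)}\), and this is where the first facet \(C_0\) must be used. By the remark after Lemma~\ref{lem:geo-3}, this alternative is the configuration in which the geodesic reverses orientation at \(C_2\).

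\emph{Excluding \(C''\), the main obstacle.} I will show that \(\mathrm{conv}(\overline{C_0}\cup\overline{C''})\not\subset\overline{C_0}\cup\overline{C_1}\cup\overline{C_2}\cup\overline{C''}\). I will imitate Step~2 of Lemma~\ref{lem:geo-3}, using a midpoint rather than a general convex combination. The new vertex of \(C''\) is \(u=\sigma^2(w'_{k-1})\), where \(w'_{k-1}\) has coordinates \((1,\ldots,1,0,1)\). I pair it with the vertex of \(C_0\) whose index is congruent to the dropped index modulo \(k\), namely the vertex \(w_l\) chosen as in the proof of Theorem~\ref{thm:standard-geodesic}. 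The candidate witness is the midpoint \(z\) of \(w_l\) and \(u\).

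Showing \(z\notin\overline{C_0}\cup\overline{C_1}\cup\overline{C_2}\) is direct. The union \(\overline{C_0}\cup\overline{C_1}\cup\overline{C_2}=\overline{C}_0^+\cap\overline{C}_2^-\) lies in \(\grC\), and \(z\) violates one monotonicity inequality \(y_j\ge y_{j+1}\). This is because \(u\) violates the corresponding inequality by \(1\), while \(w_l\) satisfies it with slack at most \(1\) in the wrong direction. Showing \(z\notin\overline{C''}\) should follow from an extreme inequality such as \(y_{2+k}\ge0\) failing at \(z\), in the same way that \(y_k(z)=-\tfrac12\) was used in Lemma~\ref{lem:geo-3}. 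I expect the bookkeeping of the shifted indices to be the delicate point. If the midpoint turns out to lie on a boundary, I will instead take a point on the segment \([w_l,u]\) sufficiently close to \(u\), which lies outside the closed strip \(\grC\) but not in \(\overline{C''}\) by the same inequality test.
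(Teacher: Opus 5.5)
\textbf{The reduction to Lemma~\ref{lem:geo-3} via \(\sigma\) is sound; the exclusion of \(C''\) fails because you pair \(u\) with the wrong vertex of \(C_0\).} The reduction is the same move as the paper's reindexing to \((C_{-2},C_{-1},C_0,C')\). You should add one justification. \(\sigma\) pulls back each function \(y_a+c\) and \(y_a-y_b+c\) (\(1\le a\ne b\le k\), \(c\in\mathbb Z\)) to another function of the same form. It therefore preserves the traces on \(A'\) of the walls of \(A\), and hence the facet structure of \(A'\). Being affine is not enough by itself to make \(\sigma^{-2}C'\) a facet, and Lemma~\ref{lem:geo-3} needs that.

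The real gap is the choice of witness. With your vertex, no point of your segment works. Concretely, \(u=\sigma^2(w'_{k-1})=(1,2,1,\ldots,1)\), the facet \(C''\) has vertices \(w_2,\ldots,w_k,u,w_{k+2}\), and
\[
\overline{C''}=\{\,1\ge y_3\ge\cdots\ge y_k\ge y_2-1\ge y_1-1\ge 0\,\}
\]
(for \(k=2\) read \(1\ge y_2-1\ge y_1-1\ge 0\)). The dropped index is \(k+1\equiv 1\pmod k\), so your \(w_l\) is \(w_1=(1,0,\ldots,0)\). The segment \([w_1,u]\) is then \(p_t=(1,2t,t,\ldots,t)\) for \(0\le t\le 1\).
\begin{itemize}
\item For \(t\le\tfrac12\) one checks \(p_t\in\overline{C_0}\).
\item For \(t\ge\tfrac12\) one checks \(p_t\in\overline{C''}\).
\end{itemize}
So the whole segment lies in \(\overline{C_0}\cup\overline{C''}\). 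In particular your midpoint \((1,1,\tfrac12,\ldots,\tfrac12)\) lies in \(\overline{C_0}\); for \(k=2\) it is the vertex \(w_2\). The slack of \(y_1\ge y_2\) at \(w_1\) is exactly \(1\), so the midpoint only reaches the wall. Your fallback fails for the same reason: points of this segment near \(u\) lie in \(\overline{C''}\).

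The fix is to pair \(u\) with \(w_0\), the vertex of \(C_0\) not contained in \(C_1\). This is \(\sigma^2\) of the paper's \(w_{-2}\). Then \(z=\tfrac12(w_0+u)=(\tfrac12,1,\tfrac12,\ldots,\tfrac12)\).
\begin{itemize}
\item \(z\) violates \(y_1\ge y_2\), so \(z\notin\grC\supset\overline{C_0}\cup\overline{C_1}\cup\overline{C_2}\).
\item \(z\) violates \(y_{k+1}\ge 0\), that is \(y_1\ge 1\), which holds on all of \(\overline{C''}\).
\end{itemize}
This \(z\) is exactly the paper's witness \((\tfrac12,\ldots,\tfrac12,-\tfrac12,0)\) transported by \(\sigma^2\). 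Note also that the inequality you anticipated, \(y_{k+2}\ge 0\), does not fail at this \(z\), since \(y_{k+2}(z)=0\).
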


\begin{proof}
For \(k=1\), the assertion is the uniqueness of continuing a line segment in
the opposite direction across the common vertex and is immediate.  We
therefore assume \(k\ge2\).

Rechoosing indices if necessary, we may equivalently consider the configuration
\[
(C_{-2},\,C_{-1},\,C_0,\,C')
\]
and show that it forms a geometric geodesic if and only if \(C'=C_1\).
This reindexing puts the known three facets in the standard backward
orientation.

Applying Lemma~\ref{lem:geo-3} to the triple \((C_{-1},C_0,C')\), we obtain
\[
C' = C^{(0)}=C_1
\qquad\text{or}\qquad
C' = C^{(k-1)}.
\]

If \(C'=C_1\), then the sequence
\((C_{-2},\,C_{-1},\,C_0,\,C_1)\) forms a geometric geodesic by
Theorem~\ref{thm:standard-geodesic}.  It remains to exclude the case
\(C'=C^{(k-1)}\).

Assume \(C'=C^{(k-1)}\).
Let \(w_{-2}\) be the unique vertex of \(C_{-2}\) not contained in \(C_{-1}\),
with
\[
(y_1,\ldots,y_k)(w_{-2})=(0,\ldots,0,-1,-1).
\]
Let \(w'_{k-1}\) be the extra vertex of \(C^{(k-1)}\), and let \(z\) be the midpoint
of \(w_{-2}\) and \(w'_{k-1}\).  Then
\[
(y_1,\ldots,y_k)(z)=(\tfrac12,\ldots,\tfrac12,-\tfrac12,0).
\]
The point \(z\) lies in the convex hull of
\(\overline{C_{-2}}\cup\overline{C_{-1}}\cup\overline{C_0}\cup\overline{C^{(k-1)}}\),
but \(z\) does not lie in any of the four closures in this union.  Indeed,
\(z\notin\overline{C^{(k-1)}}\) since \(y_{k-1}(z)<0\).  For the standard
facets, recall that the extended coordinates satisfy
\(y_0=y_k+1\) and \(y_{-1}=y_{k-1}+1\).  Thus at \(z\) we have
\[
y_{-1}(z)=\tfrac12,\qquad y_0(z)=1,\qquad
y_{k-1}(z)=-\tfrac12,\qquad y_k(z)=0.
\]
Now \(z\notin\overline{C_0}\) because the defining inequalities for
\(\overline{C_0}\) include \(y_{k-1}\ge y_k\), which fails at \(z\).
Also \(z\notin\overline{C_{-1}}\), since the defining inequalities for
\(\overline{C_{-1}}\) include \(y_{k-1}\ge0\).  Finally,
\(z\notin\overline{C_{-2}}\), since the defining inequalities for
\(\overline{C_{-2}}\) include \(y_{-1}\ge y_0\), which also fails at \(z\).
Hence the union of the four closures is not convex, contrary to the defining
convex-hull condition of a geometric geodesic.

Therefore \(C'=C^{(k-1)}\) is impossible, and we conclude \(C'=C^{(0)}=C_1\).
\end{proof}

Note that if we choose the opposite orientation, the same argument shows that
the geometric triple
\[
(C_0,C_{-1},C_{-2})
\]
extends uniquely to \((C_0,C_{-1},C_{-2},C_{-3})\).
Combining the local convexity constraints for triples and quadruples therefore
yields a global normal form for geometric geodesics.

\begin{corollary}[Normal form for geometric geodesics in an apartment]
\label{cor:geo-normal-form}
Let \(m\ge0\), and let \((\tilde C_0, \tilde C_1, \ldots,\tilde C_m)\) be a
geometric geodesic of \(k\)-facets in \(\scrB\).  After choosing an apartment
\(A\) containing the geodesic and rechoosing the standard \(k\)-facet in \(A\),
one of the following holds:
\[
\begin{aligned}
\tilde C_i &= C_i &&\text{for all } i=0,1,\ldots,m,\\
\intertext{or}
\tilde C_i &= C_{-i} &&\text{for all } i=0,1,\ldots,m.
\end{aligned}
\]
\end{corollary}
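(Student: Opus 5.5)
The plan is an induction on \(m\), using Lemma~\ref{lem:geo-3} and Lemma~\ref{lem:geo-4} as the local steps. Before that, one preliminary point needs settling: the whole geodesic lies in a single apartment \(A\). For this I use the convexity built into the definition. The convex hull of \(\overline{\tilde C}_0\cup\overline{\tilde C}_m\) equals \(\bigcup_{s}\overline{\tilde C}_s\). By the standard building axiom, \(\tilde C_0\) and \(\tilde C_m\) lie in a common apartment \(A\), and \(A\) is convex in the CAT(0) realization (\cite{AB}*{11.x}). Hence \(A\) contains that convex hull, and with it every \(\tilde C_s\). The definition is phrased in terms of convex hulls, and convex hulls computed inside \(A\) agree with those computed in \(\scrB\). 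So every consecutive subsequence of \((\tilde C_s)\) is also a geometric geodesic inside \(A\), and we may work entirely in \(A\) from now on.

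\emph{Base cases.} For \(m=0\), I choose the standard \(k\)-facet to be \(\tilde C_0\). This uses the transitivity of the affine Weyl group together with type-changing symmetries. Rechoosing the standard facet means rechoosing an identification of \(A\) with the model in Section~\ref{s:apt}, so that \(\tilde C_0\) has the ordered type used there. For \(m=1\), convexity of \(\overline{\tilde C}_0\cup\overline{\tilde C}_1\) together with Lemma~\ref{opp-convex-aff} forces \(\tilde C_1=C^{(i)}\) for some \(i\). The cyclic symmetry \(\tau\) of the remark after Lemma~\ref{lem:geo-3} fixes \(C\) and permutes the \(C^{(i)}\) transitively, so after applying a power of \(\tau\) we may take \(\tilde C_1=C^{(0)}=C_1\). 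In particular the pair has the form \(\tilde C_i=C_i\).

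\emph{Case \(m=2\).} Reindex by the translation \(w_i\mapsto w_{i-1}\), which preserves the standard strip. The pair then becomes \((C_{-1},C_0)\). Lemma~\ref{lem:geo-3} says the third facet is either \(C_1\) or \(C^{(k-1)}\). If it is \(C_1\), we have the forward form. If it is \(C^{(k-1)}\), the remark after Lemma~\ref{lem:geo-3} shows that \(\tau\) carries the triple to the reverse of the standard triple \((C_{-1},C_0,C_1)\). This means that, after rechoosing the standard facet via \(\tau\) and then shifting indices, the triple is \((C_0,C_{-1},C_{-2})\), which is the backward form. For \(k=1\) the case \(C^{(k-1)}=C^{(0)}\) coincides with the first, and no separate argument is needed.

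\emph{Inductive step.} Let \(m\ge3\). By induction, after a choice of standard facet, the triple \((\tilde C_{m-3},\tilde C_{m-2},\tilde C_{m-1})\) is \((C_{m-3},C_{m-2},C_{m-1})\) or \((C_{-(m-3)},\ldots)\). Translating along the strip by multiples of the shift \(w_i\mapsto w_{i+1}\), which is realized by \(\tau\) composed with \(w_i\mapsto w_i+w_k\), I move this triple to \((C_0,C_1,C_2)\) or \((C_0,C_{-1},C_{-2})\). Lemma~\ref{lem:geo-4}, or its oppositely oriented version noted after it, then forces \(\tilde C_m\) to be the next facet in the same pattern.

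The main obstacle is checking that all these renormalizations are compatible. The symmetries used (\(\tau\), translation by \(w_k\), and the index shift) must preserve the standard strip \(\grC\) as an indexed family up to a shift or a reversal of indices, so that the normal form obtained for the first \(m\) facets survives the change of standard facet. I would verify this directly from the formulas \(\tau(w_i)=w_{i+1}\) and \(w_{i+k}=w_i+w_k\). Together these show that the map \(w_i\mapsto w_{i+1}\) for all \(i\in\mathbb Z\) is an affine automorphism of \(A'\) that extends to a type-rotating automorphism of \(A\), and that it sends \(C_i\) to \(C_{i+1}\).
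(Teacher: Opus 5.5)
Your plan is the paper's own proof. The whole geodesic lies in one apartment because apartments are convex. The cases \(m\le 2\) come from Lemma~\ref{opp-convex-aff}, Lemma~\ref{lem:geo-3} and the symmetry \(\tau\), and the induction step is Lemma~\ref{lem:geo-4}. The paper leaves the renormalizations implicit. Where you make them explicit, the verification you propose (your ``main obstacle'') would fail as stated.

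\textbf{The shift map.} The shift \(w_i\mapsto w_{i+1}\) is the affine map \(\sigma(y_1,\ldots,y_k)=(y_k+1,y_1,\ldots,y_{k-1})\) of \(A'\). It is not \(\tau\) composed with translation by \(w_k\): in either order, those composites send \(w_0\) to \(w_{k+1}\) or to \(w_0\).

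\textbf{Neither map extends to \(A\).} In general neither \(\sigma\) nor \(\tau\) extends to an automorphism of \(A\). An automorphism of the \(\widetilde A_{n-1}\) apartment preserves or reverses the ordered type of a pointed facet. But \(\sigma\) sends \((C_0,w_0)\), of type \((\lambda_0,\ldots,\lambda_k)\), to \((C_1,w_1)\), of type \((\lambda_1,\ldots,\lambda_{k-1},\lambda_0,\lambda_k)\), and \(\tau\) behaves the same way. For \(n=4\), \(k=2\) and type \((1,2,1)\), the image type is \((2,1,1)\), which is neither \((1,2,1)\) nor its reversal. So these maps are not rechoices of the standard facet in \(A\).

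\textbf{Repair for the inductive step.} There you only use \(\sigma\) to transport Lemma~\ref{lem:geo-4}, and that is legitimate for the following reasons.
By Lemma~\ref{opp-convex-aff}(iii), all facets lie in \(A'=\operatorname{aff}(\tilde C_0)\).
Convexity is an affine notion, so affine maps of \(A'\) preserve it.
\(\sigma\) and \(\tau\) are automorphisms of the \(\widetilde A_k\) structure induced on \(A'\), whose chambers are exactly the \(k\)-facets of \(A\) lying in \(A'\).

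\textbf{Repair for \(m\le 2\).} Here you need genuine rechoices, so re-point instead of applying \(\tau\). Identify \(A\) with the model so that a suitably pointed facet of the geodesic becomes the standard pointed facet of its own ordered type. In the forward case, take \(\tilde C_0\) pointed at the vertex not in \(\tilde C_1\). Successors and predecessors of a pointed facet inside an apartment are unique, so the new standard strip is the old one reindexed or reversed.
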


\begin{proof}
Choose an apartment containing the endpoint facets \(\tilde C_0\) and
\(\tilde C_m\). Since apartments are convex and the convex hull of the endpoints
is, by definition of a geometric geodesic, the union of the intervening facet
closures, all facets \(\tilde C_i\) lie in this apartment.
We may therefore apply the local classification inside this apartment.
For $m<3$, the result follows from the preceding lemmas directly. For $m\geq 3$,
Lemma~\ref{lem:geo-4} then gives a unique continuation in the forward
(respectively, backward) direction, and induction yields the claimed normal
form.
\end{proof}

The normal form will be compared with successor chains in Section~5.

\section{Combinatorial \texorpdfstring{\(k\)-geodesics}{k-geodesics}}
\label{s:comb-geodesic}

We now bring the successor relation into the picture.  A combinatorial geodesic
is a successor chain of pointed \(k\)-facets.  Such chains are the paths that
will later be counted by adjacency operators, so the main question of this
section is how they compare with the geometric geodesics of Section~4.

\subsection{Definition of combinatorial \texorpdfstring{\(k\)-geodesics}{k-geodesics}}
Let \((\dot{C}_0, \ldots, \dot{C}_m)\) be a sequence of pointed \(k\)-facets. We say that \((\dot{C}_0, \ldots, \dot{C}_m)\) forms a \textit{combinatorial geodesic} if \(\dot{C}_{i+1}\) is a successor of \(\dot{C}_i\) for each \(i = 0, \ldots, m-1\).

We allow either kind of geodesic to be indexed by any nonempty interval
\[
\{i\in\mathbb Z:a<i<b\},
\qquad a,b\in\mathbb Z\cup\{-\infty,\infty\}.
\]

\medbreak

The following theorem compares these successor chains with the geometric notion
developed in the previous section.
For sequences consisting of one or two facets, the equivalence of the
underlying geometric and combinatorial conditions follows immediately from
Lemma~\ref{opp-convex-aff} and the definition of a successor.  The orientation
assertion below is stated for sequences of at least three facets.
\begin{theorem}\label{twogeodesics}
Let \(1\le k\le n-1\), and assume $m\ge 2$.
\begin{itemize}
   \item[\textup{(i)}] Let \((C_0, \ldots, C_m)\) be a geometric geodesic of \(k\)-facets.
   For each of the two orientations
   \[
      (C_0,\ldots,C_m)
      \quad\text{and}\quad
      (C_m,\ldots,C_0),
   \]
   there is at most one pointing of the facets in that order which can make the
   oriented sequence into a combinatorial geodesic.  Moreover, for \(k\ge2\),
   exactly one of the two orientations admits such a pointing.  For \(k=1\), both
   orientations admit such pointings, and both oriented pointed sequences are
   combinatorial geodesics.

   \item[\textup{(ii)}] Let \((\dot{C}_0, \ldots, \dot{C}_m)\) be a combinatorial geodesic of
   pointed \(k\)-facets, and let \(C_i\) denote the underlying \(k\)-facet of \(\dot{C}_i\).
   Then \((C_0, \ldots, C_m)\) is a geometric geodesic of \(k\)-facets.
\end{itemize}
\end{theorem}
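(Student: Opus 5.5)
Both parts reduce to the standard model of Section~\ref{s:apt} via Corollary~\ref{cor:geo-normal-form}. The key computation is to determine, in the model, which pointings of the standard facets $C_i$ are compatible with the successor relation. Write $\dot C_i=(C_i,w_i)$, so its ordered vertex sequence is $(w_i,w_{i+1},\ldots,w_{i+k})$.

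\textbf{Step 1: the forward chain is a successor chain.} I would first check that each $\dot C_{i+1}$ is a successor of $\dot C_i$. The vertex sequences shift by one, as the successor relation of Section~\ref{bt-successor-relation} requires. The facets $C_i$ and $C_{i+1}$ share the $(k-1)$-face missing $w_i$, and $\overline C_i\cup\overline C_{i+1}$ is convex by Theorem~\ref{thm:standard-geodesic}; hence they are opposite neighbours by Lemma~\ref{opp-convex-aff}. One also has to confirm that $(w_i,\ldots,w_{i+k})$ is the ordered vertex sequence attached to the pointing at $w_i$, i.e.\ that the cyclic order of vertex types agrees with the orientation of $\Delta_{\mathrm{aff}}$. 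This follows from the formula for $w_i$: the types increase by $\lambda_i$ and $w_{i+k}=w_i+w_k$.

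\textbf{Step 2: uniqueness of the pointing.} For a successor chain, the $0$-th vertex of $\dot C_j$ is forced: it is the vertex of $C_j$ not lying in $C_{j+1}$. It is also the vertex of $C_{j+1}$'s predecessor that is dropped, so the last facet's pointing is determined as the first vertex of the previous pointed facet. Hence, for a fixed orientation, any pointing making the sequence a combinatorial geodesic is unique, which proves the "at most one" clause of (i).

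\textbf{Step 3: part (i), which orientation works.} By Corollary~\ref{cor:geo-normal-form}, after a change of apartment and of standard facet, the geodesic is $(C_0,\ldots,C_m)$ or $(C_0,C_{-1},\ldots,C_{-m})$. The forward order admits the pointing of Step 1. For the reversed order, Step 2 forces the pointings: the vertex of $C_{-j}$ not in $C_{-j-1}$ is $w_{k-j}$. The successor relation then demands that the new sequence be the old one shifted, so $C_{-j}$ would have ordered sequence $(w_{k-j},w_{-j},\ldots)$. For $k\ge2$ this is not a cyclic shift compatible with the type order, since $w_{k-j}$ follows $w_{k-j-1}$, not precedes $w_{-j}$. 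So the reversed orientation fails once $m\ge2$ forces two consecutive steps. For $k=1$ the sequence reads $(w_{1-j},w_{-j})$, which is the oriented edge pointed at $w_{1-j}$, and it works. I expect this orientation check to be the \textbf{main obstacle}. Carrying it out requires tracking carefully how the pointing determines the ordered vertex sequence through the type map; I would do it in the lattice model of Section~\ref{lattice-pointed-type}, where $w_{i+1}$ corresponds to a sublattice of $w_i$.

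\textbf{Step 4: part (ii).} I would proceed by induction on $m$. Any two consecutive facets lie in a common apartment and, by Lemma~\ref{opp-convex-aff}, form a standard pair $(C_0,C_1)$ with the pointing of Step 1, after using transitivity of the type-preserving group on pointed facets of a given type. The next successor $\dot C_2$ shares the face of $C_1$ missing $w_1$ and is opposite $C_1$ there. The link of that face is a spherical building, and $C_1$ has a unique opposite in any apartment containing both facets. I would therefore choose an apartment containing $C_0$ and $C_2$; it contains $C_1$ by convexity of the pair. Inside it, Lemma~\ref{lem:geo-3} together with the pointing restriction from Step 3 leaves only $C_2$ in the model, since $C^{(k-1)}$ corresponds to the wrong pointing. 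Iterating with Lemma~\ref{lem:geo-4} identifies the chain with $(C_0,\ldots,C_m)$. Theorem~\ref{thm:standard-geodesic} then gives convexity inside that apartment. Since apartments are convex in the CAT(0) realization, this convex hull equals the hull in $\scrB$. The delicate point is choosing an apartment containing all of $C_0,\ldots,C_m$. I would handle it inductively: take an apartment containing $C_0$ and $C_m$, and show that the intermediate facets lie in it using the fact that the union $\overline C_0\cup\cdots\cup\overline C_{m-1}$ is already known to be convex.
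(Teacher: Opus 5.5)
Your part (i) is essentially the paper's argument. You reduce to the normal form of Corollary~\ref{cor:geo-normal-form}, point forward at $w_i$, and note that the overlap condition forces the pointing. For the reversed order you compare the forced base vertex $w_{k-j-1}$ of $C_{-j-1}$ with the first vertex $w_{-j}$ of $(C_{-j},w_{k-j})$, and these agree only for $k=1$. Your sentence saying $w_{k-j}$ does not precede $w_{-j}$ is garbled, since on $C_{-j}$ it does precede it cyclically, but the intended mismatch is the right one.

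Part (ii), however, has a genuine gap, exactly at the point you call delicate. Write the given successor chain as $(D_0,\dots,D_m)$ with vertex sequence $x_0,x_1,\dots$, so that $D_j$ has vertices $x_j,\dots,x_{j+k}$. Everything in your Step~4 (Lemmas~\ref{lem:geo-3}, \ref{lem:geo-4}, Theorem~\ref{thm:standard-geodesic}) needs all of $D_0,\dots,D_m$ in one apartment, and you never produce one. Already for the triple, ``it contains $C_1$ by convexity of the pair'' is not a reason: convexity of $\overline{D_0}\cup\overline{D_1}$ says nothing about an apartment chosen to contain $D_0$ and $D_2$. What actually works is that every vertex of $D_1$ is a vertex of $D_0$ or $D_2$, and an apartment containing all vertices of a facet contains the facet. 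That trick only handles $m\le k+1$. Once $m\ge k+2$, the vertices $x_{k+1},\dots,x_{m-1}$ belong to neither $D_0$ nor $D_m$; for $k=1$ this already happens at $m=3$.

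Your inductive remedy does not close the gap. The hypothesis that $\overline{D_0}\cup\cdots\cup\overline{D_{m-1}}$ is convex shows these facets lie in every apartment containing $D_0$ and $D_{m-1}$. But an apartment $A$ chosen to contain $D_0$ and $D_m$ is not known to contain $D_{m-1}$. Proving $D_{m-1}\subset A$ for all such $A$ is essentially the assertion $D_{m-1}\subset\mathrm{conv}(\overline{D_0}\cup\overline{D_m})$, which is what (ii) claims. So a genuinely global ingredient is missing.

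The paper supplies it group-theoretically rather than by apartment-chasing:
\begin{enumerate}
\item It computes the stabilizers of the standard half-geodesics (Lemma~\ref{lem:half-stabilizers}).
\item It shows these reduce onto parabolic subgroups that act transitively on the complements parametrizing a new predecessor or successor (Lemmas~\ref{lem:add-predecessor} and~\ref{lem:add-successor}).
\item It deduces that $\GL_F(V)$ acts transitively on combinatorial geodesics with a fixed type at one index.
\end{enumerate}
Hence every successor chain is a translate of the standard one, which is geometric.

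If you want to keep a local approach, you need a local-to-global principle instead, for instance the CAT(0) fact that local geodesics are geodesics. Each window of $k+2$ consecutive facets lies in one apartment by the vertex-covering argument, where it is standard and hence convex in $\scrB$. Straight segments of the glued strip are then local geodesics of $\scrB$. As written, though, your proof of (ii) is incomplete for $m\ge k+2$.
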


\noindent
The same assertions hold for geodesics indexed by any interval
\(\{\,i\in\mathbb{Z}: a<i<b\,\}\) containing at least three facets; in
particular they apply to one-sided and bi-infinite geodesics.

\begin{proof}
We prove part \textup{(i)}.  The proof of part \textup{(ii)} will be given in the next
subsection.

Let \((C_0,\ldots,C_m)\) be a geometric geodesic of \(k\)-facets.
If \(k=1\), then each facet has two vertices.  For a geometric geodesic of
edges, point \(C_i\), for \(i<m\), at the vertex not contained in \(C_{i+1}\)
in the chosen orientation; the pointing of the last edge is then forced by the
successor relation from \(C_{m-1}\) to \(C_m\).  Consecutive edges have convex
union, so the corresponding vertices in the relevant link are opposite by
Lemma~\ref{opp-convex-aff}; hence the successor condition holds.  The same
argument applies after reversing the orientation, proving the claim in this
case.

\medskip
Assume from now on that \(k\ge2\).
Since the statement is symmetric under reversing the order of the facets,
we may replace the sequence by its reverse if necessary.
By Corollary~\ref{cor:geo-normal-form}, we may therefore assume that
\((C_0,\ldots,C_m)\) is the standard geometric geodesic, where
\(C_i\) is the \(k\)-facet with vertices \(w_i,\ldots,w_{i+k}\)
as in Section~\ref{s:apt}.

For each \(i\), define the forward pointed facet
\[
   \dot{C}_i^+ := (C_i, w_i).
\]
Equivalently, \(\dot{C}_i^+\) is represented by the ordered vertex sequence
\[
   (w_i, w_{i+1}, \ldots, w_{i+k}).
\]

By construction, \(w_i\) is the unique vertex of \(C_i\) not contained in
\(C_{i+1}\), and \(w_{i+1}\) is the distinguished vertex of \(C_{i+1}\).
The union \(\overline C_i\cup\overline C_{i+1}\) is convex by
Theorem~\ref{thm:standard-geodesic}, so the two facets are opposite across
their common face by Lemma~\ref{opp-convex-aff}.  Hence
\(\dot{C}_{i+1}^+\) is the successor of \(\dot{C}_i^+\) for all \(i\),
and \((\dot{C}_0^+,\ldots,\dot{C}_m^+)\) is a combinatorial geodesic.

Uniqueness follows from the fact that, given the underlying facets \(C_i\),
the overlap condition forces the basepoint of each forward successor uniquely.

For the reverse orientation, the only possible pointing is forced in the same
way.  Namely, for each \(i=0,\ldots,m\) define
\[
   \dot{C}_i^-=(C_i,w_{i+k}),
\]
since \(w_{i+k}\) is the unique vertex of \(C_i\) not contained in \(C_{i-1}\)
for \(i\ge1\).
The successor relation in particular requires that
\(w_{m+k-1}\) be the successor of \(w_{m+k}\) in the cyclic ordering of the
vertices of \(C_m\).
But the successor of \(w_{m+k}\) on \(C_m\) is \(w_m\).  Hence this requirement
would force \(w_{m+k-1}=w_m\), which can occur only when \(k=1\).

In that case,
\[
   (\dot C_m^-,\dot C_{m-1}^-,\ldots,\dot C_0^-)
   =
   \bigl((C_m,w_{m+1}),\,(C_{m-1},w_m),\,\ldots,\,(C_0,w_1)\bigr)
\]
is indeed a combinatorial geodesic.

For \(k\ge 2\), the cyclic ordering of vertices of a \(k\)-facet prevents this
successor relation from holding, and hence exactly one orientation of a
geometric geodesic admits a compatible pointing.

\end{proof}

\subsection{Stabilizers and transitivity}

The results of the previous sections show that geometric geodesics of
\(k\)-facets admit a rigid normal form and that, except when \(k=1\),
a geometric geodesic determines a unique orientation compatible with the
successor relation.  In particular, there exists a distinguished
bi-infinite sequence of pointed \(k\)-facets
\[
   \{\dot C_i\}_{i\in\mathbb Z},
\]
called the \emph{standard combinatorial geodesic}, whose underlying sequence of
\(k\)-facets \(\{C_i\}_{i\in\mathbb Z}\) is the standard geometric geodesic
introduced in Section~\ref{s:apt}.

The main result of this section is the following:
\emph{every combinatorial geodesic in the building with a fixed type at one
chosen index is obtained
from the standard one of that type by the action of \(\GL_F(V)\).}
\medskip

More precisely, we will show that the standard combinatorial geodesic serves as
a universal model: its stabilizer inside \(\GL_F(V)\) can be described
explicitly, and the ambient group \(\GL_F(V)\) acts transitively on the set of
combinatorial geodesics whose type is fixed at one chosen index.  As a
consequence, the underlying sequence of facets of any combinatorial geodesic is
automatically a geometric geodesic, completing the proof of
Theorem~\ref{twogeodesics}\textup{(ii)}.

\medskip

To make this precise, we now pass to a concrete lattice model.
Fix a basis \(u_1,\ldots,u_n\) of \(V:=F^n\), so that
\(\GL_F(V)=\GL_n(F)=G_\gl\).  Let
\(T\) be the maximal torus of \(\GL_F(V)\) consisting of those
elements diagonalized by this basis.  For \(x \in \bbZ^n\), let
\(L(x)\) be the \(\ringO\)-lattice spanned by
\(\pi^{x_1}u_1,\ldots,\pi^{x_n}u_n\).  Then \(x+\Delta\bbZ\mapsto [L(x)]\) is a bijection between \(\bbZ^n/ \Delta \bbZ\) and vertices on
\(A(T)\), the apartment associated to \(T\).  The set \(\bbZ^n/\Delta \bbZ\)
is simply the set of vertices on \(A=\bbR^n/\Delta \bbR\), the apartment
considered in the last section.  The above bijection in fact extends
to an isomorphism \(\scrL :A\simto A(T)\).

Recall that we have defined \(\{w_i\}_{i\in\bbZ}\) and \(\{\dot
C_i\}_{i\in\bbZ}\) in Section~\ref{s:apt}.
We would like to compute the stabilizer of the combinatorial geodesic
\(\{\scrL(\dot C_i)\}_{a'<i<b'}\), where \(a',b'\in
\bbZ\cup\{-\infty,\infty\}\).  Equivalently, this is to compute the
stabilizer of \(\{\scrL(w_i)\}_{a<i<b}\), where \(a,b\in
\bbZ\cup\{-\infty,\infty\}\).  We emphasize that the geodesics are
indexed sequences, and to fix a sequence is to fix each member of the sequence.

Let \(\tilde w_i\in \bbZ^n\) be such
that \(\tilde w_i+\Delta\bbR=w_i\) and the \(n\)-th coordinate of
\(\tilde w_i\) is zero.  We will in fact deal with the stabilizer of
\(\{L(\tilde w_i)\}_{a<i<b}\).  This stabilizer, together with
\(Z(\GL_F(V))\), generate the stabilizer of
\(\{\scrL(w_i)\}_{a<i<b}\).

Let \(M_j=\ringO\langle u_{\lambda_0+\cdots +\lambda_{j-2}+s}:1\leq
s\leq \lambda_{j-1}
  \rangle\), \(j=1,\ldots,k+1\).  Then
\[
  L(\tilde
  w_i)=\pi^{y_1(w_i)}M_1\oplus \cdots \oplus \pi^{y_k(w_i)}
  M_{k}\oplus M_{k+1}.
\]
Here $y_i$ are the coordinate functions introduced in Section~\ref{s:apt}.

Put \(V_j=M_j\otimes _{\ringO} F\) so that \(V=V_1\oplus\cdots\oplus
V_{k+1}\).  We regard an element \(a\) of \(\Hom(V,V)\) as
\((k+1)\times(k+1)\) matrix with \(a_{ij} \in \Hom(V_j,V_i)\).  The following
description is obtained by imposing the conditions
\(aL(\tilde w_i)=L(\tilde w_i)\) for all \(i\ge0\), respectively for all
\(i\le0\), and reading off the resulting valuation bounds on the blocks
\(a_{ij}\).

\begin{lemma}\label{lem:half-stabilizers}
With respect to the decomposition \(V=V_1\oplus\cdots\oplus V_{k+1}\), write
\(a=(a_{ij})\).  The stabilizer of \(\{L(\tilde w_i)\}_{i\geq 0}\) in
  \(\GL_F(V)\) consists precisely of those \(a\in\GL_F(V)\) such that 
\[
  a_{ij} \in \begin{cases}
    \GL_\ringO(M_i), &i=j,\\
  \Hom_\ringO(M_j,M_i),&i>j,\\
  \pi\Hom_\ringO(M_j,M_i),&i<j\neq k+1,\\
  \{0\}, &i<j=k+1.
\end{cases}
\]  The stabilizer of \(\{L(\tilde w_i)\}_{i\leq 0}\) in \(\GL_F(V)\) consists
precisely of those \(a\in\GL_F(V)\) such that 
\[
  a_{ij} \in \begin{cases}
    \GL_\ringO(M_i), &i=j,\\
    \Hom_\ringO(M_j,M_i),&k+1\neq i>j,\\
     \{0\},&k+1 = i>j,\\
  \pi\Hom_\ringO(M_j,M_i),&i<j\neq k+1,\\
 \Hom_\ringO(M_j,M_i), &i<j=k+1.
\end{cases}
\]
\end{lemma}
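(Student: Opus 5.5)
We prove the lemma by a direct valuation computation in the block decomposition. The condition \(aL=L\) for \(L=\bigoplus_j \pi^{e_j}M_j\) (with \(e_{k+1}=0\)) is equivalent to \(a_{ij}\in\pi^{e_i-e_j}\Hom_\ringO(M_j,M_i)\) for all \(i,j\), together with the same condition for \(a^{-1}\). So stabilizing a family of lattices \(L(\tilde w_s)\) means
\[
a_{ij}\in \pi^{m_{ij}}\Hom_\ringO(M_j,M_i),\qquad m_{ij}=\sup_s\bigl(y_i(w_s)-y_j(w_s)\bigr),
\]
where we set \(y_{k+1}:=0\) and the supremum runs over the relevant indices \(s\). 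If \(m_{ij}=+\infty\), this forces \(a_{ij}=0\). The task therefore reduces to computing \(m_{ij}\) from the explicit coordinates \(y_i(w_s)=\lceil (s-i+1)/k\rceil\) for \(1\le i\le k\).

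\emph{Case \(s\ge0\).} For \(1\le i,j\le k\), the differences \(y_i(w_s)-y_j(w_s)\) are periodic in \(s\) with period \(k\) (using \(w_{s+k}=w_s+w_k\)). Their values are \(0\) or \(1\) if \(i<j\), and \(0\) or \(-1\) if \(i>j\), with the extreme value attained. This gives exponent \(1\) above the diagonal, \(0\) below it, and \(0\) on the diagonal. For \(j=k+1\) and \(i\le k\), the difference is \(y_i(w_s)\), which is unbounded as \(s\to+\infty\), so \(a_{i,k+1}=0\). For \(i=k+1\) and \(j\le k\), the difference is \(-y_j(w_s)\le 0\), with value \(0\) at \(s=0\), so the entry lies in \(\Hom_\ringO\).

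\emph{Case \(s\le0\).} The computation is symmetric. Now \(-y_j(w_s)\to+\infty\), which kills \(a_{k+1,j}\) for \(j\le k\). Meanwhile \(\sup_s y_i(w_s)=0\), which makes \(a_{i,k+1}\) integral. The periodic entries are unchanged because a full period of \(s\) is still available.

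It remains to get the diagonal condition \(a_{ii}\in\GL_\ringO(M_i)\) and the converse inclusion. In each case the described set \(P\) is a group: it is a block-triangular set modulo \(\pi\), so it is an \(\ringO\)-order-type subgroup. Its reduction mod \(\pi\) is block triangular, so \(a\in P\) is invertible in \(P\) exactly when the diagonal blocks are invertible mod \(\pi\), i.e.\ lie in \(\GL_\ringO(M_i)\). For the forward direction, if \(a\) stabilizes the family then so does \(a^{-1}\), so both lie in the integral (non-unit) version of \(P\). Reducing mod \(\pi\) gives invertible block-triangular matrices whose diagonal blocks are therefore invertible. Conversely, every \(a\in P\) maps each \(L(\tilde w_s)\) into itself by the exponent bounds, and \(a^{-1}\in P\) does as well, so \(aL(\tilde w_s)=L(\tilde w_s)\).

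The main obstacle is bookkeeping rather than an idea. The step needing most care is verifying the extreme values of the periodic differences \(\lceil (s-i+1)/k\rceil-\lceil (s-j+1)/k\rceil\): we must check that the value \(1\) for \(i<j\) (resp.\ \(-1\) for \(i>j\)) actually occurs within the allowed range of \(s\), including near the boundary \(s=0\). This is what makes the sharp exponent \(1\) (not \(0\)) appear above the diagonal. We handle it by exhibiting \(s\) explicitly: \(s=j-2\) satisfies \(s\ge 0\) whenever \(j\ge 2\), i.e.\ whenever \(i<j\). Its negative shift \(s-k\) handles the \(s\le0\) case.
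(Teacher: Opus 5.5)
Your route is the same as the paper's. The paper also reduces to \(\operatorname{val}(a_{rs})\ge\sup_m\bigl(y_r(w_m)-y_s(w_m)\bigr)\) over \(m\ge0\), resp.\ \(m\le0\), reads off the suprema \(0\), \(1\), \(\infty\), and then simply asserts that the diagonal blocks must be units. Your treatment of the unit condition is more explicit than the paper's.

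However, the step you single out as delicate is wrong as written: the witness \(s=j-2\) fails when \(j=i+1\). Indeed,
\[
y_i(w_{j-2})-y_j(w_{j-2})=\lceil (j-i-1)/k\rceil-\lceil -1/k\rceil=\lceil (j-i-1)/k\rceil ,
\]
which is \(0\) for adjacent blocks. For example, with \(k=2\), \(i=1\), \(j=2\), \(s=0\), one has \(y(w_0)=(0,0)\). The correct witness is \(s=j-1\ge1\). For \(0\le s\le k\), \(y(w_s)=(1,\ldots,1,0,\ldots,0)\) with \(s\) ones, so \(y_i(w_s)-y_j(w_s)=1\) exactly when \(i\le s<j\). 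By periodicity, \(s=j-1-k\le-1\) gives the same value in the range \(s\le0\). With this fix the exponent \(1\) above the diagonal is justified in both cases.

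Two small repairs on the unit condition:
\begin{itemize}
\item In the second case the reduction mod \(\pi\) is block triangular only after moving \(V_{k+1}\) to the front. Its block row vanishes off the diagonal, but its block column survives.
\item Your claim that \(P\) is closed under inversion needs an argument, and the cleanest fix is to avoid it. Suppose \(a\) satisfies the integral bounds and every \(a_{ii}\in\GL_\ringO(M_i)\). Then \(a\) preserves \(L(\tilde w_0)\), and its reduction is block triangular with invertible diagonal, so \(\det a\in\ringO^\times\). Hence \(\length(L/aL)=\operatorname{val}(\det a)=0\), i.e.\ \(aL=L\), for every lattice \(L\) in the family. Alternatively, Cayley--Hamilton puts \(a^{-1}\) in the \(\ringO\)-algebra generated by \(a\).
\end{itemize}
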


\begin{proof}
Put \(y_{k+1}(w_m)=0\).  A block
\(a_{rs}\in\Hom_F(V_s,V_r)\) preserves all the lattices in the indicated
half-geodesic precisely when its valuation satisfies
\[
\operatorname{val}(a_{rs})\ge
\sup_m\bigl(y_r(w_m)-y_s(w_m)\bigr),
\]
where \(m\ge0\) or \(m\le0\), respectively.  For \(m\ge0\), these suprema
are \(0\) when \(r\ge s\), \(1\) when \(r<s\le k\), and infinite when
\(r<s=k+1\).  For \(m\le0\), they are \(0\) when
\(k+1\ne r>s\), infinite when \(r=k+1>s\), \(1\) when
\(r<s\ne k+1\), and \(0\) when \(r<s=k+1\).  The diagonal blocks must be
invertible over \(\ringO\).  These bounds give exactly the two displayed
block descriptions.
\end{proof}

To illustrate, we take \(k=3\) and represent the two stabilizers in the lemma in the
following form: 
\[
  \begin{bmatrix}
    1^\times &\pi &\pi &0\\
    1&1^\times&\pi&0\\
    1&1&1^\times&0\\
    1&1&1&1^\times
  \end{bmatrix}\quad\mbox{ and }\quad
   \begin{bmatrix}
    1^\times &\pi &\pi &1\\
    1&1^\times&\pi&1\\
    1&1&1^\times&1\\
    0&0&0&1^\times
  \end{bmatrix},
\]
where the symbols \(1^\times,1,\pi\) and \(0\) at the \((i,j)\)-entry mean that
the \((i,j)\)-entry lies in \(\GL_\ringO(M_i)\),
\(\Hom_\ringO(M_j,M_i)\), \(\pi\Hom_\ringO(M_j,M_i)\), and \(\{0\}\),
respectively.

\begin{lemma}\label{lem:add-predecessor}
Let \((\dot C_i)_{i\in\mathbb Z}\) be the standard combinatorial geodesic.
Consider combinatorial geodesics \((\dot D_i)_{i\ge -1}\) such that
\(\dot D_i=\scrL(\dot C_i)\) for all \(i\ge0\).  The stabilizer of
\(\{L(\tilde w_i)\}_{i\geq 0}\) in \(\GL_F(V)\) acts transitively on this set.
\end{lemma}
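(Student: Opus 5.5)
The plan is to reduce the statement to a transitivity question about complements of a fixed subspace in a finite-dimensional \(\kappa\)-vector space, and then to realize every such complement by a unipotent block matrix taken from the first stabilizer in Lemma~\ref{lem:half-stabilizers}.

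\emph{Step 1: parametrize the possible \(\dot D_{-1}\).} Write \(L_i=L(\tilde w_i)\); these form a decreasing chain with \(L_{i+k}=\pi L_i\). Since \(\dot D_0=\scrL(\dot C_0)=[L_0,\ldots,L_k]\) must be a successor of \(\dot D_{-1}\), the description in Section~\ref{successor} forces \(\dot D_{-1}=[N,L_0,\ldots,L_{k-1}]\) for some lattice \(N\). Let \(N'\) be the lattice with \(\pi N'=L_k\), i.e.\ \(N'=\pi^{-1}L_k\); it is the term appearing in the successor formula. The opposite-neighbor condition then reads
\[
N\cap N'=L_0,\qquad N+N'=\pi^{-1}L_{k-1}.
\]
Thus \(\dot D_{-1}\) is determined by the subspace \(N/L_0\) of \(E:=\pi^{-1}L_{k-1}/L_0\), and the admissible choices are exactly the complements of the fixed subspace \(Y:=N'/L_0\) in \(E\). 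Conversely, every such complement defines a predecessor, and hence a combinatorial geodesic \((\dot D_i)_{i\ge-1}\).

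\emph{Step 2: compute \(E\), \(Y\) and a base point in coordinates.} Using \(L(\tilde w_i)=\pi^{y_1(w_i)}M_1\oplus\cdots\oplus\pi^{y_k(w_i)}M_k\oplus M_{k+1}\), we have
\[
\pi^{-1}L_{k-1}=M_1\oplus\cdots\oplus M_{k-1}\oplus\pi^{-1}M_k\oplus\pi^{-1}M_{k+1},
\qquad
N'=M_1\oplus\cdots\oplus M_k\oplus\pi^{-1}M_{k+1}.
\]
Therefore \(E=\bar M_k\oplus\bar M_{k+1}\) with \(\bar M_j:=\pi^{-1}M_j/M_j\), and \(Y=\bar M_{k+1}\). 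The standard predecessor comes from \(w_{-1}\), which has \(y\)-coordinates \((0,\ldots,0,-1)\). It gives the complement \(X_0=\bar M_k\), i.e.\ \(N_0=L(\tilde w_{-1})\). Every other complement of \(Y\) is the graph \(\{x+\bar\varphi(x)\}\) of a unique \(\kappa\)-linear map \(\bar\varphi:\bar M_k\to\bar M_{k+1}\).

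\emph{Step 3: realize every graph.} Lift \(\bar\varphi\) to \(\varphi\in\Hom_\ringO(M_k,M_{k+1})\). Define \(a\in\GL_F(V)\) to be the identity except for the block \(a_{k+1,k}=\varphi\). Since \(k+1>k\), this block is permitted by the first description in Lemma~\ref{lem:half-stabilizers}, and the diagonal blocks are identities. Hence \(a\) fixes every \(L(\tilde w_i)\) with \(i\ge0\), and so fixes \(\dot D_i=\scrL(\dot C_i)\) for \(i\ge0\). On \(E\), \(a\) acts by \(x+y\mapsto x+\bar\varphi(x)+y\), so \(aN_0\) corresponds to the graph of \(\bar\varphi\). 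Because the stabilizer fixes \(L_0,\ldots,L_{k-1}\), the element \(a\) carries the geodesic built from \(N_0\) to the one built from the graph of \(\bar\varphi\). The action of a stabilizer element on predecessors is compatible with the successor relation, so this gives the desired transitivity.

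The main obstacle is Step~1, namely checking carefully that the representatives can be normalized as claimed. The concern is that the homothety classes of \(\dot D_{-1}\) might not be represented by literally \(L_0,\ldots,L_{k-1}\), with \(N\) scaled so that \(L_0\subsetneq N\); equivalently, that the pointing and ordering conventions of Section~\ref{successor} match the indexing \(w_{i+k}=w_i+w_k\). I would verify this first, by matching the dimension of \(N/L_0\) with \(\lambda_{k-1}=\operatorname{rank}M_k\) via the type rotation rule of Section~\ref{bt-successor-relation}.
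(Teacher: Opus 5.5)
Your proposal is correct and is essentially the paper's argument: both identify the admissible predecessors with the complements of $\bar M_{k+1}$ in $\pi^{-1}L(\tilde w_{k-1})/L(\tilde w_0)\simeq\bar M_k\oplus\bar M_{k+1}$, and then use the first stabilizer of Lemma~\ref{lem:half-stabilizers} to move one complement to any other; your unipotent element with the single off-diagonal block $a_{k+1,k}=\varphi$ is a concrete instance of the paper's observation that $H$ maps onto the parabolic stabilizer of $\bar M_{k+1}$. The only slip is your side remark $L(\tilde w_{i+k})=\pi L(\tilde w_i)$, which is false (for instance $L(\tilde w_k)=\pi M_1\oplus\cdots\oplus\pi M_k\oplus M_{k+1}\neq\pi L(\tilde w_0)$), but it is never used, since your Step~2 computes everything directly from the block formula.
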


\begin{proof}
Only the predecessor \(\dot D_{-1}\) is variable; the fixed tail determines all
other terms.
It must be of the form \((D_{-1},v)\), where \(D_{-1}\) has
vertices \(v,\scrL(w_0),\ldots,\scrL(w_{k-1})\).  So the whole
geodesic is determined by \(v\), a lattice class.  A lattice class
can serve as \(v\) if and only if it has a unique representative \(L\) such
that
\(L(\tilde w_0)\subset L \subset \pi^{-1}L(\tilde w_{k-1})\) and
\[
  \frac{\pi^{-1}L(\tilde w_{k-1})}{L(\tilde w_0) }=
  \frac{\pi^{-1}L(\tilde w_k)}{L(\tilde w_0)} \oplus \frac{L}{L(\tilde
    w_0)}.
\]
It remains to show that the lattices \(L\) satisfying this condition form a
single \(H\)-orbit, where
\(H\) is the stabilizer of \(\{L(\tilde w_i)\}_{i\geq 0}\) in
\(\GL_F(V)\).  

 We have \(\pi^{-1}L(\tilde w_{k-1})/L(\tilde w_0)\simeq \bar M_k
  \oplus \bar M_{k+1}\), where \(\bar M_j=\pi^{-1}M_j/M_j\) for
  \(j=k,k+1\).  Under this identification, \(\pi^{-1}L(\tilde
  w_k)/L(\tilde w_0)\) is
  \(\bar M_{k+1}\).  The set of \(L\)'s can then be identified with 
\[
\{U \subset \bar M_k\oplus \bar M_{k+1} : U\mbox{ is a subspace
  complement to } \bar M_{k+1}\}.
\]
The image of \(H\) in \(\GL(\bar M_k\oplus \bar M_{k+1})\) is exactly the
stabilizer of \(\bar M_{k+1}\) in \(\GL(\bar M_k\oplus \bar M_{k+1})\) by
the preceding lemma.  Indeed, the allowed matrix blocks in the preceding lemma
reduce to this parabolic stabilizer, and conversely any element of the parabolic
stabilizer lifts by choosing arbitrary \(\ringO\)-linear lifts in the allowed
blocks.  This group acts transitively on the above set of complements \(U\).
This finishes the proof.
\end{proof}

\begin{lemma}\label{lem:add-successor}
Let \((\dot C_i)_{i\in\mathbb Z}\) be the standard combinatorial geodesic.
Consider combinatorial geodesics \((\dot D_i)_{i\le1}\) such that
\(\dot D_i=\scrL(\dot C_i)\) for all \(i\le0\).  The stabilizer of
\(\{L(\tilde w_i)\}_{i\le0}\) in \(\GL_F(V)\) acts transitively on this set.
\end{lemma}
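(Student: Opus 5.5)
This is the mirror image of Lemma~\ref{lem:add-predecessor}, and the plan is to run that argument with the roles of the two ends exchanged. Only \(\dot D_1\) is free; the fixed backward tail determines everything else. By the lattice description of the successor relation in Section~\ref{successor}, \(\dot D_1\) is the pointed facet \((D_1,\scrL(w_1))\). Here \(D_1\) has vertices \(\scrL(w_1),\ldots,\scrL(w_k),v\), and \(v\) is a new lattice class.

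First I would characterize the admissible \(v\) through the opposition condition. Write the new vertex as \([\pi^{-1}L']\), or equivalently use the lattice \(\pi L'_0\) appearing in the successor sequence \([L_1,\ldots,L_k,\pi L_0']\). The condition \(L_0\cap L_0'=L_1\), \(L_0+L_0'=\pi^{-1}L_k\) then becomes a condition on a unique representative \(L\). This representative satisfies
\[
L(\tilde w_1)\subset L\subset \pi^{-1}L(\tilde w_k)
\]
(after the appropriate normalization), and \(L/L(\tilde w_1)\) must be a complement to \(L(\tilde w_0)/L(\tilde w_1)\) in \(\pi^{-1}L(\tilde w_k)/L(\tilde w_1)\).

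Next I would identify this quotient with the explicit block decomposition. Passing from \(w_0\) to \(w_1\) raises \(y_1\) by one, and passing from \(w_0\) to \(w_k\) raises all of \(y_1,\ldots,y_k\) by one. Hence \(\pi^{-1}L(\tilde w_k)/L(\tilde w_1)\) should be \(\bar M_1\oplus\bar M_{k+1}\), with \(\bar M_j=\pi^{-1}M_j/M_j\) up to the usual shift, and the distinguished subspace \(L(\tilde w_0)/L(\tilde w_1)\) should be \(\bar M_1\). The admissible \(L\) are then in bijection with the subspaces \(U\subset\bar M_1\oplus\bar M_{k+1}\) that are complementary to \(\bar M_1\).

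Finally, let \(H'\) be the stabilizer of \(\{L(\tilde w_i)\}_{i\le0}\) and use the second block description in Lemma~\ref{lem:half-stabilizers}. The relevant entries are \(a_{11}\in\GL_\ringO(M_1)\), \(a_{k+1,k+1}\in\GL_\ringO(M_{k+1})\), \(a_{1,k+1}\in\Hom_\ringO(M_{k+1},M_1)\), and \(a_{k+1,1}=0\). So the image of \(H'\) in \(\GL(\bar M_1\oplus\bar M_{k+1})\) is exactly the parabolic stabilizer of \(\bar M_1\), and every element of that parabolic lifts by choosing arbitrary integral lifts of the allowed blocks. This parabolic acts transitively on the complements of \(\bar M_1\): given two complements, the graphs of two maps \(\bar M_{k+1}\to\bar M_1\), an upper unipotent element carries one to the other.

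The main obstacle is the bookkeeping in the first two steps. One must get the normalization of representatives right, so that the quotient really is \(\bar M_1\oplus\bar M_{k+1}\) with \(\bar M_1\) as the fixed subspace rather than \(\bar M_{k+1}\). This has to match the zero block \(a_{k+1,1}=0\), and not the block that vanishes in the forward stabilizer. I would verify it directly on the \(y\)-coordinates of \(w_0,w_1,w_k\) and \(w_{k+1}\).
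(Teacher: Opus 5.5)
\textbf{Verdict: genuine gap.} Your overall route (parametrize the new vertex by complements, then use a parabolic acting on them) is the paper's route. However, the third step does not hold for the group you use.

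Your first two steps are correct. We have
\[
L(\tilde w_1)=\pi M_1\oplus M_2\oplus\cdots\oplus M_{k+1},\qquad
\pi^{-1}L(\tilde w_k)=M_1\oplus\cdots\oplus M_k\oplus\pi^{-1}M_{k+1}.
\]
So the quotient is \(M_1/\pi M_1\oplus\pi^{-1}M_{k+1}/M_{k+1}\), and the admissible new vertices are exactly the complements of the first summand.

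The problem is the last step. The second group in Lemma~\ref{lem:half-stabilizers} only requires \(a_{r,k+1}\in\Hom_\ringO(M_{k+1},M_r)\), with no factor of \(\pi\). Such elements need not preserve \(L(\tilde w_1)\) or \(\pi^{-1}L(\tilde w_k)\). For example, let \(e\colon M_{k+1}\to M_1\) send \(u_n\mapsto u_1\) and kill the other basis vectors, and put \(a=1+e\). Then \(a\) satisfies all the block conditions, but \(au_n=u_n+u_1\notin L(\tilde w_1)\).

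Consequences:
\begin{itemize}
\item This group moves the vertex \(\scrL(w_1)\) of \(\dot D_0=\scrL(\dot C_0)\), so it does not act on your set of extensions.
\item It has no ``image in \(\GL(\bar M_1\oplus\bar M_{k+1})\)''.
\item A unimodular \((1,k+1)\)-block sends \(\pi^{-1}M_{k+1}\) into \(\pi^{-1}M_1\), not into \(M_1\). So your ``arbitrary integral lifts'' of the off-diagonal entry give elements that do not carry one admissible \(L\) to another.
\end{itemize}
This is exactly the one-level shift you flagged as the main risk. Checking \(a_{k+1,1}=0\) does not settle it. The paper's two-line proof appeals to the same second block description, so the slip is already in the printed statement. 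Your argument inherits it rather than repairing it.

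\textbf{The repair.} The fixed tail \((\scrL(\dot C_i))_{i\le0}\) has vertex set \(\{\scrL(w_i)\}_{i\le k}\), because \(\dot C_0\) has vertices \(w_0,\dots,w_k\). So work with the stabilizer \(H''\) of \(\{L(\tilde w_i)\}_{i\le k}\).
\begin{itemize}
\item Rerun the valuation bound from the proof of Lemma~\ref{lem:half-stabilizers} over \(m\le k\). Only the last column changes: \(\sup_{m\le k}y_r(w_m)=y_r(w_k)=1\), so \(a_{r,k+1}\in\pi\Hom_\ringO(M_{k+1},M_r)\) for \(r\le k\). The condition \(a_{k+1,s}=0\) still holds.
\item Then \(H''\) preserves \(L(\tilde w_1)\) and \(\pi^{-1}L(\tilde w_k)\).
\item The components \(M_j\) with \(2\le j\le k\) vanish in the quotient. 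So only \(a_{11}\), \(a_{k+1,k+1}\), \(a_{k+1,1}=0\) and \(a_{1,k+1}=\pi b\) act.
\item The off-diagonal entry is the reduction of \(b\), which is arbitrary. Hence the image of \(H''\) is the parabolic fixing \(\bar M_1\).
\item Lifts exist if you take the off-diagonal block in \(\pi\Hom_\ringO(M_{k+1},M_1)\). Your graph-of-maps argument then finishes the proof.
\end{itemize}
Since \(H''\) is a subgroup of the group named in the statement, this also gives the transitivity asserted there. That is, for any two extensions some element of that group carries one to the other, and such an element automatically fixes the common tail. This is what the Proposition's induction uses.
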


\begin{proof}
Only the successor \(\dot D_1\) is variable.  As in the proof of
Lemma~\ref{lem:add-predecessor}, its new terminal vertex is parametrized by
the complements of a fixed summand in a two-block quotient.  The second
stabilizer in Lemma~\ref{lem:half-stabilizers} reduces onto the parabolic
subgroup stabilizing that summand and therefore acts transitively on the set
of complements.  This proves the required transitivity without reversing the
successor relation.
\end{proof}

\begin{proposition}
Fix \(a,b,c\in\bbZ\cup\{-\infty,\infty\}\) with \(a<c<b\), and fix an
ordered partition \((\lambda_0,\ldots,\lambda_k)\) of \(n\).  The group
\(\GL_F(V)\) acts transitively on the set of combinatorial geodesics
\((\dot D_i)_{a<i<b}\) such that \(\dot D_c\) has type
\((\lambda_0,\ldots,\lambda_k)\).
\end{proposition}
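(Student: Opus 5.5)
We reduce the statement to the two one-step extension lemmas, Lemma~\ref{lem:add-predecessor} and Lemma~\ref{lem:add-successor}, together with transitivity of \(\GL_F(V)\) on pointed \(k\)-facets of a given ordered type. The plan has four steps.

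\textbf{Step 1: match the anchor facet.} Given a combinatorial geodesic \((\dot D_i)_{a<i<b}\) with \(\dot D_c\) of type \((\lambda_0,\ldots,\lambda_k)\), we first reindex so that \(c=0\). Reindexing is harmless, because the standard geodesic satisfies \(w_{i+k}=w_i+w_k\) and is invariant under the corresponding shift by a torus element. Next, \(\GL_F(V)\) acts transitively on pointed \(k\)-facets of a fixed ordered type. Indeed, a pointed facet is a lattice chain \(L_0\supsetneq\cdots\supsetneq L_k\supsetneq\pi L_0\) with prescribed successive quotient dimensions. One chooses a basis of \(L_0\) adapted to the flag \(L_1/\pi L_0\supset\cdots\supset L_k/\pi L_0\) in \(L_0/\pi L_0\), and maps it to \(u_1,\ldots,u_n\). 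Since the type is preserved by the action, we may therefore translate so that \(\dot D_0=\scrL(\dot C_0)\), where \(\dot C\) is the standard combinatorial geodesic of that type.

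\textbf{Step 2: extend forward.} We show by induction on \(m\ge0\) (for \(m<b\)) that there exists \(g\) with \(g\dot D_i=\scrL(\dot C_i)\) for \(0\le i\le m\) that also fixes what has already been matched on the negative side. For the forward step we use the \(\le\)-version. Suppose \(\dot D_i=\scrL(\dot C_i)\) for \(0\le i\le m\). The sequence \((\dot D_i)_{i\le m+1}\) agrees with a shift of the standard geodesic up to index \(m\). After shifting indices by \(m\), Lemma~\ref{lem:add-successor} gives an element of the stabilizer of \(\{L(\tilde w_i)\}_{i\le m}\) carrying \(\dot D_{m+1}\) to \(\scrL(\dot C_{m+1})\).

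There is a subtlety here: that lemma fixes the whole standard half-geodesic \(i\le m\), whereas we only know the terms from \(0\) to \(m\). So instead we use the stabilizer of the finite segment \(\{L(\tilde w_i)\}_{0\le i\le m+k}\). We argue, as in the proof of Lemma~\ref{lem:add-predecessor}, that its image in the relevant two-block quotient \(\bar M\oplus\bar M'\) is still the full parabolic subgroup stabilizing the fixed summand. This holds because the block conditions of Lemma~\ref{lem:half-stabilizers} for a finite segment are weaker than, but contain, those for the half-geodesic. Hence the parabolic surjectivity persists, and the new vertex, which is parametrized by the complements of a fixed subspace, is moved into standard position while the already matched facets stay fixed.

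\textbf{Step 3: extend backward.} Symmetrically, we extend backwards using Lemma~\ref{lem:add-predecessor}, fixing the finite segment already matched. Alternating the forward and backward steps handles any finite interval.

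\textbf{Step 4: infinite intervals.} When \(a=-\infty\) or \(b=\infty\), the composite of the infinitely many successive group elements must converge. Each correcting element at stage \(m\) lies in the stabilizer of an ever-growing segment. By Lemma~\ref{lem:half-stabilizers}, these stabilizers shrink to \(\pi^N\)-congruence neighbourhoods in off-diagonal blocks along the \(y\)-directions. After conjugating by the appropriate torus elements, the partial products are therefore Cauchy in \(\GL_F(V)\) and converge to an element lying in the intersection of the compact stabilizers. Alternatively, the convergence can be deduced from compactness: the set of \(g\) matching the first \(m\) terms is a nonempty compact coset, these sets are nested, and their intersection is nonempty.

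The nested compactness argument is cleaner, and it is the one we would carry out. The main obstacle is verifying that the stabilizer of the finite segment fixed at each stage still surjects onto the parabolic subgroup needed for transitivity on complements. Everything else is bookkeeping with Lemmas~\ref{lem:half-stabilizers}--\ref{lem:add-successor}.
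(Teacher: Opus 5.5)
Your proposal is correct and follows essentially the paper's route. You anchor \(\dot D_c\) by transitivity on pointed facets of a fixed ordered type, extend one facet at a time with Lemmas~\ref{lem:add-predecessor} and~\ref{lem:add-successor}, and pass to infinite intervals by nested compactness. Your ``main obstacle'' is immediate: the half-geodesic stabilizer already fixes the matched finite window, and the admissible new facets depend only on the current endpoint facet.

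One small correction to Step~4: in \(\GL_F(V)\) the set of \(g\) matching a finite window is a coset of a group containing the center \(F^\times\), so it is not compact. As in the paper, run the compactness argument in \(\PGL_F(V)\), inside \(K_c h\), and lift at the end. Also, the torus-shift justification for reindexing in Step~1 is unnecessary, since shifting the index set trivially commutes with the group action.
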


\begin{proof}
When \(b-a=2\), the statement reduces to the transitivity of \(\GL_F(V)\) on
pointed facets of fixed type.  For finite \(a,b\), the general case follows by induction on
  \(b-a\).  After translating indices, Lemma~\ref{lem:add-predecessor} is the
  induction step for adding one predecessor to a fixed half-geodesic, while
  Lemma~\ref{lem:add-successor} is the corresponding step for adding one
  successor.  Iterating these two extensions gives transitivity on every
  finite window with the type fixed at the chosen index \(c\).

  Suppose now that one of the endpoints is infinite, and let
  \((\dot D_i)\) and \((\dot E_i)\) be two such geodesics.  Choose
  \(h\in\PGL_F(V)\) with \(h\dot D_c=\dot E_c\), and let \(K_c\) be
  the stabilizer of \(\dot E_c\) in \(\PGL_F(V)\); this is compact open.
  For an increasing exhaustion by finite windows \(I\) containing \(c\), set
  \[
  E_I=\{g\in K_c h:g\dot D_i=\dot E_i\text{ for every }i\in I\}.
  \]
  By the finite case, every \(E_I\) is nonempty.  These sets are closed and
  nested inside the compact set \(K_c h\), so their intersection is nonempty.
  An element of the intersection carries \((\dot D_i)\) to
  \((\dot E_i)\) on the whole interval.  Lifting it to \(\GL_F(V)\) proves
  the assertion.
\end{proof}

\subsection{Proof of Theorem~\ref{twogeodesics}(ii)}

We now prove part~(ii).

The underlying sequence of facets of the standard combinatorial geodesic
\(\{\scrL(\dot C_i)\}_{i\in\mathbb Z}\) is, by construction, a geometric
geodesic.
Since geometric geodesics are preserved under the action of
\(\GL_F(V)\), the same holds for the image of the standard geodesic under any
group element.
Now let \(\{\dot D_i\}_{a<i<b}\) be any combinatorial geodesic, choose an index
\(c\) in its indexing interval, and let \((\lambda_0,\ldots,\lambda_k)\) be the
type of \(\dot D_c\).  By the transitivity result above, \(\{\dot D_i\}_{a<i<b}\)
is the image of the standard combinatorial geodesic of this type.  Hence its
underlying sequence of facets is geometric.
This completes the proof.

Together, Sections~4 and 5 identify the geometric and combinatorial
descriptions as two realizations of the same \(k\)-geodesics in the building.
We henceforth use the term \emph{\(k\)-geodesic} without an adjective.  On a
quotient complex, we take the combinatorial description---a successor chain of
pointed facets---as the definition.

\section{Geodesics and zeta functions on quotient complexes}

We first define \(k\)-geodesics on quotient complexes without any finiteness
assumption.  Finiteness will be imposed only when the zeta functions are
introduced.

In this section, let $G=G_\ad$ for short.

\subsection{Quotient complexes}\label{quot-cx}
Throughout this section, we use the term \(\Delta\)-complex for what
Ranicki--Weiss call a \(\Delta\)-set.  Thus a \(\Delta\)-complex \(X\) consists
of sets \(X(k)\) of \(k\)-facets and face maps
\[
\partial_i:X(k)\longrightarrow X(k-1),\qquad 0\le i\le k,
\]
satisfying \(\partial_i\partial_j=\partial_{j-1}\partial_i\) for \(i<j\).  Its
geometric realization is a \(\Delta\)-complex in the sense of
Hatcher~\cite{Hat}*{2.1}.  We prefer Hatcher's term, but otherwise follow the
terminology and conventions of Section~2 of~\cite{RW}, with ``facet'' in place
of ``simplex.''  In particular, we regard \(X\) as its category of facets: a
morphism \(C\to D\) records a specified occurrence of \(C\) as a face of
\(D\).  Thus distinct face incidences are retained, even when they have the
same source and target.

If \(\Gamma\) acts on a \(\Delta\)-complex \(X\) by \(\Delta\)-complex
automorphisms, then the quotient \(\Gamma\backslash X\) inherits a natural
\(\Delta\)-complex structure; see \cite{RW}*{2.8}.  By contrast, the quotient
of a simplicial complex by a simplicial action need not inherit a
simplicial-complex structure.  For background on covering complexes, see
\cite{R}.

If \(X\) is a simplicial complex, then, by choosing a total order on the set of
vertices of each simplex of \(X\) in a compatible way, one can endow \(X\)
with the structure of a \(\Delta\)-complex; see \cite{RW}*{2.6}.  A simplicial
automorphism acts as an automorphism of this \(\Delta\)-complex precisely when
its induced bijection on the vertices of every simplex is order preserving.

\subsection{Quotient complexes \texorpdfstring{$\X$}{X}}
Recall that the set of vertex types of \(\scrB\) is \(\bbZ/n\bbZ\).  We now
give it the total order
\[
0<1<\cdots<n-1.
\]
This order is somewhat artificial, but a choice of total order is needed to
specify a \(\Delta\)-complex structure.  The vertices of every facet of
\(\scrB\) have distinct types, so ordering them by type is compatible with
passage to faces.  By Section~\ref{quot-cx}, the chosen order makes
\(\scrB\) into a \(\Delta\)-complex, which we denote by \(\scrB_\Delta\).
Every type-preserving automorphism of \(\scrB\) induces an automorphism of
\(\scrB_\Delta\).

Now let \(\Gamma\) be a discrete type-preserving subgroup of \(G\) acting
freely on \(\scrB\).  Then
\[
\scrB_\Gamma:=\Gamma\backslash\scrB_\Delta
\]
is a \(\Delta\)-complex.  In particular, \(\scrB_{\{1\}}=\scrB_\Delta\).  Since
\(\scrB\) is contractible, \(\Gamma\) is naturally
identified with the fundamental group of \(\scrB_\Gamma\).

Here are two simple facts that are useful for verifying the freeness condition
and, later, the finiteness condition.

\begin{itemize}
  \item If \(\Gamma\) is torsion-free, then \(\Gamma\) acts freely on
    \(\scrB\).  Indeed, for any facet \(F\) of \(\scrB\),
    \(\Gamma\cap\Stab_G(F)\), being discrete and compact, is finite,
    hence is equal to \(\{1\}\).
  \item If \(\Gamma\) is cocompact, then \(\Gamma\backslash G/K\) is
    finite for any compact open subgroup \(K\) of \(G\).  Thus if
    \(\Gamma\) is also type-preserving and acts freely, then
    \(\scrB_\Gamma\) is a finite \(\Delta\)-complex.
\end{itemize}

In the above, we have used the following facts: the set \(\scrB^{(k)}\)
of \(k\)-facets of \(\scrB_\Delta\) consists of finitely many
\(G\)-orbits, and the stabilizers are compact open subgroups.  The set
of \(k\)-facets of \(\scrB_\Gamma\) is
\(\Gamma\backslash\scrB^{(k)}\).

We stress that \(\scrB_\Gamma\) is not necessarily a simplicial complex.  This
is already evident when \(n=2\): \(\scrB\) is a simple graph, but
\(\scrB_\Gamma\) is not necessarily a simple graph.
This is why we work with the natural \(\Delta\)-complex quotient, which retains
distinct facets even when they have the same set of vertices.

It follows that every vertex, facet, and pointed facet of \(\scrB_\Gamma\) has
a well-defined type valued, respectively, in \(\bbZ/n\bbZ\), the set of
circular partitions, and the set of ordered partitions, as in
Sections~\ref{bt-facet-type}--\ref{bt-pointed-type} (and
Sections~\ref{lattice-facet-type}--\ref{lattice-pointed-type}).  Also, every
pointed \(k\)-facet \((C,v)\) may be represented by
\((v_0,\ldots,v_k)\) as in Sections~\ref{bt-pointed-type}
and~\ref{lattice-pointed-type}.  The geometric and algebraic lengths of
Section~\ref{length} are therefore well-defined on \(\scrB_\Gamma\).

Let \(Y\) be a \(\Delta\)-complex and let \(C\in Y(k)\).  Regarding \(Y\) as
its category of facets, define the \(r\)-facets of the \emph{link} of \(C\) by
\[
\operatorname{Lk}_Y(C)(r)
=
\{\alpha:C\longrightarrow D\mid D\in Y(k+r+1)\}.
\]
Here \(\alpha\) is a specified occurrence of \(C\) as a face of \(D\).  Let
\(j_0<\cdots<j_r\) be the vertices of \(D\) complementary to this occurrence.
The \(i\)-th face of \(\alpha\) is the induced incidence
\(C\to\partial_{j_i}D\) through which \(\alpha\) factors.  These face maps make
\(\operatorname{Lk}_Y(C)\) a \(\Delta\)-complex.  Equivalently, its facets are
the complementary facets associated with the objects \(C\to D\) of the
undercategory \(C/Y\), glued according to the morphisms in \(C/Y\).  When \(Y\)
is a simplicial complex, this agrees with the usual link of \(C\).

\begin{lemma}\label{lem:quotient-links}
Let \(C\) be a facet of \(\scrB_\Gamma\), and choose a lift
\(\widetilde C\) in \(\scrB_{\{1\}}\).  The quotient map induces an isomorphism
\[
\operatorname{Lk}_{\scrB_{\{1\}}}(\widetilde C)
\;\xrightarrow{\ \sim\ }\;
\operatorname{Lk}_{\scrB_\Gamma}(C).
\]
In particular, the link of every proper facet of \(\scrB_\Gamma\) is a
spherical building, and hence has a well-defined opposition relation
\cite{AB}*{4.68 and 4.72}.
\end{lemma}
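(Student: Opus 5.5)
The plan is to identify both links with undercategories and to show that the quotient map induces an equivalence of undercategories \(\widetilde C/\scrB_{\{1\}}\to C/\scrB_\Gamma\). By the description of \(\operatorname{Lk}_Y(C)\) given above, this equivalence is exactly the asserted isomorphism of \(\Delta\)-complexes. Write \(p:\scrB_\Delta\to\scrB_\Gamma\) for the quotient. An \(r\)-facet of \(\operatorname{Lk}_{\scrB_{\{1\}}}(\widetilde C)\) is an incidence \(\widetilde\alpha:\widetilde C\to\widetilde D\). Since \(\scrB_\Delta\) comes from a simplicial complex with vertices ordered by type, such an incidence is just the inclusion of the face \(\widetilde C\) into a facet \(\widetilde D\supset\widetilde C\). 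The map \(\widetilde\alpha\mapsto p(\widetilde\alpha)\) commutes with the face maps of the link, because \(p\) is a morphism of \(\Delta\)-complexes: it respects each \(\partial_j\), and it preserves the complementary vertex indices \(j_0<\cdots<j_r\). So the only thing to prove is bijectivity in each degree.

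\emph{Surjectivity.} Let \(\alpha:C\to D\) be an incidence in \(\scrB_\Gamma\). Since \(\scrB_\Gamma(m)=\Gamma\backslash\scrB^{(m)}\) and the face maps are induced from \(\scrB_\Delta\), there is a lift \(\widetilde D\) of \(D\) with \(\partial_J\widetilde D\) a lift of \(C\), where \(\partial_J\) is the iterated face map recording \(\alpha\). Write \(\partial_J\widetilde D=\gamma\widetilde C\) for some \(\gamma\in\Gamma\). Then \(\gamma^{-1}\widetilde D\) contains \(\widetilde C\) in the same position, and so gives a lift of \(\alpha\).

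\emph{Injectivity.} Suppose \(\widetilde C\to\widetilde D\) and \(\widetilde C\to\widetilde D'\) have the same image. Then \(\widetilde D'=\gamma\widetilde D\) for some \(\gamma\in\Gamma\), and the two incidences occupy the same face position. Hence \(\gamma\widetilde C\) and \(\widetilde C\) are the same face of \(\widetilde D'\), so \(\gamma\widetilde C=\widetilde C\). Freeness of the action (recorded above for torsion-free \(\Gamma\), and assumed in general) forces \(\gamma=1\), and therefore \(\widetilde D=\widetilde D'\).

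Compatibility with the face maps of the link then follows because \(p\) commutes with every \(\partial_j\). This gives the isomorphism. For the final assertion, \(\operatorname{Lk}_{\scrB}(\widetilde C)\) is a spherical building for every proper facet \(\widetilde C\) (\cite{BT}, \cite{AB}), and the ordering by type produces its \(\Delta\)-structure. Opposition is defined intrinsically in that building, so it transports through the isomorphism. It is also independent of the choice of lift, since two lifts differ by some \(\gamma\in\Gamma\), which gives a compatible isomorphism of links.

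The main obstacle is the bookkeeping in injectivity: one has to check that the incidence data (the specified occurrence) really pin down \(\gamma\widetilde C=\widetilde C\), rather than only equality of vertex sets. This is exactly where working with the \(\Delta\)-complex, and not with the underlying simplicial quotient, matters.
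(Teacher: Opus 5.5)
Your proposal is correct and follows essentially the same route as the paper. The paper simply asserts that the quotient map is a covering of \(\Delta\)-complexes, so every coface incidence of \(C\) lifts uniquely once \(\widetilde C\) is fixed, and then cites the spherical-residue result and uses a type-preserving automorphism to get independence of the lift; your surjectivity/injectivity argument, with freeness forcing \(\gamma\widetilde C=\widetilde C\Rightarrow\gamma=1\), is exactly the verification of that unique lifting.
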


\begin{proof}
The quotient map \(\scrB_{\{1\}}\to\scrB_\Gamma\) is a covering of
\(\Delta\)-complexes.  Once \(\widetilde C\) is fixed, every coface incidence
of \(C\) lifts uniquely to a coface incidence of \(\widetilde C\), and this
lifting is compatible with all induced face maps.  It therefore gives the
displayed isomorphism of links.  The link of a proper facet in an affine
building is its spherical residue, which is a spherical building
\cite{BT}*{5.1.32}, \cite{tits}*{3.5.4}.  Replacing \(\widetilde C\) by another
lift composes the isomorphism with a type-preserving building automorphism, so
the opposition relation on the quotient link is independent of the lift.
\end{proof}

\subsection{The local successor digraph}
We next define successors directly on the quotient complex, using
Lemma~\ref{lem:quotient-links}.  Let
\[
\dot C=(v_0,v_1,\ldots,v_k),\qquad
\dot C'=(w_0,w_1,\ldots,w_k)
\]
be pointed \(k\)-facets of \(\scrB_\Gamma\).  A \emph{successor arrow} from
\(\dot C\) to \(\dot C'\) consists of specified incidences of a common
\((k-1)\)-facet for which
\[
(w_0,\ldots,w_{k-1})=(v_1,\ldots,v_k)
\]
as ordered vertex occurrences and for which the corresponding vertices in
the link of the common facet are opposite.  Two choices of
face incidences with the same source and target define two different arrows.

Let \(\mathcal D_k(\X)\) be the digraph whose vertices are the pointed
\(k\)-facets of \(\X\) and whose arrows are the successor arrows.  Parallel
arrows and loops are allowed.  We write \(s(e)\) and \(t(e)\) for the source
and target of an arrow \(e\).  Thus \(\mathcal D_k(\X)\), including all
multiplicities needed below, is defined entirely from the local incidence and
link data of \(\scrB_\Gamma\).

\begin{lemma}[Compatibility with the building]\label{lem:local-successor}
The quotient map induces a natural isomorphism of digraphs
\[
\Gamma\backslash\mathcal D_k(\scrB_{\{1\}})\;\xrightarrow{\ \sim\ }
\mathcal D_k(\X).
\]
Consequently, after one lift of its initial vertex has been chosen, every
directed path in \(\mathcal D_k(\X)\) has a unique directed lift to
\(\mathcal D_k(\scrB_{\{1\}})\).
\end{lemma}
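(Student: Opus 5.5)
The plan is to build the map of digraphs explicitly, check it on vertices, and then check it on arrows by reducing to the isomorphism of links in Lemma~\ref{lem:quotient-links}.

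\emph{Vertices.} The quotient map \(p:\scrB_{\{1\}}\to\scrB_\Gamma\) is a covering of \(\Delta\)-complexes. Since \(\Gamma\) acts freely and preserves types, \(p\) sends a pointed \(k\)-facet \((\widetilde C,\tilde v)\) to a pointed \(k\)-facet \((C,v)\) of the same ordered type. It follows that \(p\) induces a bijection
\[
\Gamma\backslash\{\text{pointed }k\text{-facets of }\scrB_{\{1\}}\}\longrightarrow\{\text{pointed }k\text{-facets of }\scrB_\Gamma\},
\]
because \(\scrB_\Gamma(k)=\Gamma\backslash\scrB^{(k)}\) and a vertex occurrence in \(C\) lifts uniquely once \(\widetilde C\) is fixed.

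\emph{Arrows.} An arrow of \(\mathcal D_k(\scrB_{\{1\}})\) from \(\dot{\widetilde C}\) to \(\dot{\widetilde C}'\) consists of two coface incidences \(\widetilde C''\to\widetilde C\) and \(\widetilde C''\to\widetilde C'\) of a common \((k-1)\)-facet. They must match the ordered vertex occurrences, and they must determine opposite vertices of \(\operatorname{Lk}(\widetilde C'')\). Applying \(p\) gives a pair of incidences in \(\scrB_\Gamma\). The vertex-matching condition is preserved because \(p\) is a map of \(\Delta\)-complexes. Opposition is preserved because, by Lemma~\ref{lem:quotient-links}, \(p\) induces an isomorphism \(\operatorname{Lk}(\widetilde C'')\simeq\operatorname{Lk}(C'')\), and the opposition relation on the quotient link is by definition transported through such an isomorphism, independently of the lift. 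This map is \(\Gamma\)-invariant, so it descends to \(\Gamma\backslash\mathcal D_k(\scrB_{\{1\}})\to\mathcal D_k(\X)\).

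\emph{Bijectivity on arrows.} Fix an arrow \(e\) of \(\mathcal D_k(\X)\) with source \(\dot C\), and fix a lift \(\dot{\widetilde C}\) of \(\dot C\). The incidence \(C''\to C\) defining the face opposite \(v_0\) lifts uniquely to an incidence \(\widetilde C''\to\widetilde C\). The second incidence \(C''\to C'\) is a vertex of \(\operatorname{Lk}(C'')\), so through the link isomorphism it lifts uniquely to a coface \(\widetilde C''\to\widetilde C'\). The ordered vertex data and opposition pull back, which gives a unique arrow \(\tilde e\) out of \(\dot{\widetilde C}\) lying over \(e\). Any other lift of \(\dot C\) is \(\gamma\dot{\widetilde C}\) for a unique \(\gamma\in\Gamma\), and the arrows over \(e\) starting there are exactly \(\gamma\tilde e\). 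Hence the \(\Gamma\)-orbits of arrows over \(e\) form a single orbit, which gives injectivity; surjectivity is built in.

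The one point requiring care is that parallel arrows coming from distinct face incidences with the same source and target remain distinct. This holds because we work in the category of facets, where the covering property is a statement about incidences rather than vertex sets.

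\emph{Path lifting.} This follows by induction on the length of the path: once the initial vertex is lifted, each successive arrow has a unique lift starting at the current endpoint, by the unique-lifting step above.

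I expect the main obstacle to be the bookkeeping for \(\Delta\)-complex incidences. Specifically, one must check that the link isomorphism of Lemma~\ref{lem:quotient-links} is compatible with the identification of ordered vertex occurrences \((w_0,\ldots,w_{k-1})=(v_1,\ldots,v_k)\). I would handle this by noting that both conditions are stated in terms of morphisms in \(C''/Y\), and the covering \(p\) induces an isomorphism of undercategories \(\widetilde C''/\scrB_{\{1\}}\simeq C''/\scrB_\Gamma\).
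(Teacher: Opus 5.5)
Your proposal is correct and takes the same route as the paper. You use that the quotient map is a covering of \(\Delta\)-complexes, together with Lemma~\ref{lem:quotient-links}, to show that successor arrows descend with their face incidences and opposition, that each arrow lifts uniquely once a lift of its source is fixed, and that path lifting then follows by induction; the paper's proof is a condensed version of exactly this argument.
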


\begin{proof}
Lemma~\ref{lem:quotient-links} identifies the relevant links and preserves
their opposition relations.  The covering property also preserves types and
individual face incidences.  Hence successor arrows descend, every successor
arrow lifts uniquely once its source is fixed, and successive arrows in a path
lift compatibly.  This proves both assertions.
\end{proof}

\subsection{\texorpdfstring{\(k\)-geodesics}{k-geodesics}}
A \emph{\(k\)-geodesic} in \(\X\) is a directed path in
\(\mathcal D_k(\X)\); finite, one-sided infinite, and two-sided infinite paths
are all allowed.  When we
speak only of its underlying geodesic, we forget the pointing but retain the
successor incidences traversed by the path.  Lemma~\ref{lem:local-successor}
and Theorem~\ref{twogeodesics} show that the lift of its underlying sequence of
facets is a geometric \(k\)-geodesic in \(\scrB\).

Thus this definition on \(\scrB_\Gamma\) is precisely the combinatorial
realization of the unified notion established in Sections~4--5.  In
particular, when \(\Gamma=\{1\}\), the \(k\)-geodesics just defined are exactly
the combinatorial \(k\)-geodesics of Section~5, upon identifying the
\(\Delta\)-complex \(\scrB_{\{1\}}\) with the simplicial complex \(\scrB\).

For \(k\ge2\), this successor structure gives the orientation used throughout
the paper, and reversing a path need not give another \(k\)-geodesic.
For \(k=1\), the two orientations of an admissible edge path are represented by
different pointed-edge paths.  Only when \(n=2\) is this exactly the usual
non-backtracking path in an ordinary graph; in higher rank the transition is
the stronger, type-sensitive opposition condition in the vertex link.

\medskip
\noindent\textbf{Closed \(k\)-geodesics.}
A closed \(k\)-geodesic of geometric length \(m\) is a
closed directed path
\[
\scrC=(e_0,e_1,\ldots,e_{m-1})
\quad\text{in }\mathcal D_k(\X),
\]
where \(t(e_i)=s(e_{i+1})\), with indices taken modulo \(m\).  Two such paths
are equivalent if their arrow sequences differ by a cyclic shift.  The path is
\emph{primitive} if its cyclic arrow sequence is not a proper power of a
shorter cyclic arrow sequence.  These are entirely local definitions on
\(\X\).

Lemma~\ref{lem:local-successor} also records the monodromy of a closed path.
After a lift of \(s(e_0)\) is chosen, the path has a unique directed lift
\((\dot C_i)_{i\in\bbZ}\), and there is a unique \(\gamma\in\Gamma\) such that
\[
\dot C_{i+m}=\gamma\dot C_i\qquad(i\in\bbZ).
\]
Changing the initial lift conjugates \(\gamma\).  In particular, an \(r\)-th
power of a closed directed path has monodromy an \(r\)-th power.  We will not
use the converse, which need not follow merely from a power decomposition of
the conjugacy class in \(\Gamma\).

\medskip
\noindent\textbf{Lengths.}
For \(\scrC=(e_0,\ldots,e_{m-1})\), the definitions of
Section~\ref{length} give
\[
l_G(\scrC)=m,
\qquad
l_A(\scrC)=\sum_{i=0}^{m-1}l_A\bigl(s(e_i)\bigr).
\]
Both lengths are manifestly invariant under cyclic shift and are defined
locally on \(\X\).

\begin{example}[The two directions when \(n=3\)]\label{ex:pgl3-directions}
Let \(k=1\).  A pointed edge of type \((\lambda,3-\lambda)\) has algebraic
length \(\lambda\), and the successor relation preserves this ordered type.
Thus a closed path of \(m\) such edges has algebraic length \(m\lambda\).  Its
reverse, when regarded as the oppositely pointed closed path, has type
\((3-\lambda,\lambda)\) and algebraic length \(m(3-\lambda)\).  The two
directions are therefore distinct objects and in general have different
algebraic lengths.
\end{example}

\subsection{Geodesic zeta functions}
From now on, assume in addition that \(\Gamma\) is cocompact.  Then \(\X\) is
a finite \(\Delta\)-complex, so the local successor digraphs are finite.
The zeta functions count primitive closed directed paths in the local
successor digraph.

For \(1\le k\le n-1\), we define the \emph{\(k\)-th geodesic zeta function} of
\(\X\) by
\[
Z_k(\X,u)
=
\prod_{[\scrC]}
\left(1-u^{l_A(\scrC)}\right)^{-1},
\]
where \([\scrC]\) ranges over equivalence classes of primitive closed
\(k\)-geodesics in \(\X\).

This zeta function has a natural factorization according to the type of a
\(k\)-geodesic.  If one of its pointed \(k\)-facets has ordered type
\[
(\lambda_0,\ldots,\lambda_{k-1},\lambda_k),
\]
then the formula in Section~\ref{bt-successor-relation} shows that the successor relation cyclically
permutes the first \(k\) entries and leaves the last entry fixed.  Thus the
\(k\)-geodesic has a well-defined type \((a,b)\), where
\[
a=(\!(\lambda_0,\ldots,\lambda_{k-1})\!)
\]
is a circular partition of \(n-b\) into \(k\) parts and
\(b=\lambda_k\) is a positive integer.  Let
\[
Z_{k,a,b}(\X,u)
=
\prod_{\substack{[\scrC]\\\operatorname{type}(\scrC)=(a,b)}}
\left(1-u^{l_A(\scrC)}\right)^{-1}.
\]
Then
\[
Z_k(\X,u)
=
\prod_{(a,b)}Z_{k,a,b}(\X,u),
\]
where the product ranges over all \(1\le b\le n-k\) and all circular
partitions \(a\) of \(n-b\) into \(k\) parts.  When \(n=3\) and \(k=1\), the
two possible values \(b=1,2\) give the two edge zeta functions appearing in
the main formula of \cite{KL}.  The same factorization applies to the twisted
zeta function defined below.

For a closed \(k\)-geodesic \(\scrC\), define
\[
\epsilon(\scrC):=(-1)^{(k+1)l_G(\scrC)}.
\]
We also define the \(\epsilon\)-twisted \(k\)-th geodesic zeta function by
\[
Z_k^\epsilon(\X,u)
=
\prod_{[\scrC]}
\left(1-\epsilon(\scrC)u^{l_A(\scrC)}\right)^{-1}.
\]

For \(k=1\), the two directions are counted separately, as directed cycles in
\(\mathcal D_1(\X)\).  In the rank-one case \(n=2\), one has
\(l_A=l_G\), and this recovers the classical Ihara zeta function.

\subsection{Unramified standard L-functions of
\texorpdfstring{$L^2(\Gamma \backslash G_\ad)$}{L2(Gamma backslash G-ad)}}
\label{lfunction}
For an irreducible unramified representation \(\rho\) of \(G_\ad\) with Satake
parameter \(s(\rho)\), viewed as a semisimple conjugacy class in \(G_\sc(\mathbb{C})\),
define the \(L\)-function of \(\rho\) attached to the standard representation of
\(G_\sc\) (chosen when we fix an orientation on the affine Dynkin diagram
\(\Delta_{\mathrm{aff}}\)) by
\[
L(\rho,u)=\det(I-s(\rho)u)^{-1},
\]
where \(I\) is the identity matrix.
Since \(\Gamma\) is cocompact, \(L^2(\Gamma\backslash G_\ad)\) is a unitary
representation of \(G_\ad\) and decomposes as a direct sum of irreducible
subrepresentations.
The unramified standard \(L\)-function of \(\Gamma\) is defined as
\[
L(\Gamma,u)=\prod_\rho L(\rho,u)^{m(\rho)}.
\]
Here \(\rho\) runs through the irreducible unramified subrepresentations of
this representation, and \(m(\rho)\) denotes the corresponding multiplicity.
This is a finite product: the \(K\)-fixed lines of these constituents lie in
the finite-dimensional space
\(L^2(\Gamma\backslash G_\ad/K)\), whose dimension is
\(\#(\Gamma\backslash G_\ad/K)\).

The main theorem is the following.
\begin{theorem}\label{main}
Let \(\Gamma\) be a discrete torsion-free cocompact subgroup of \(G_\ad\), and
assume that \(\Gamma\) preserves vertex types, equivalently
\(\Gamma\subset\ker(G_\ad\to\Xi_\ad\simeq\mathbb Z/n\mathbb Z)\).  Put
\(\X:=\Gamma\backslash\scrB\). Then
\[
(1-u^n)^{\chi(\X)}\,L(\Gamma,q^{(n-1)/2}u)
=
\prod_{k=1}^{n-1}Z_k^\epsilon(\X,u)^{(-1)^{k+1}},
\]
where \(\chi(\X)\) is the Euler characteristic of the finite \(\Delta\)-complex \(\X\).
Here \(L(\Gamma,u)\) is the unramified standard \(L\)-function defined above.
\end{theorem}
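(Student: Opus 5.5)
We follow the outline announced in the introduction: a Bass/Hoffman-type argument in which Euler products become determinants, organized by a cochain complex over all facet dimensions.

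\emph{Step 1: Euler products as determinants.} For each \(k\), let \(\Omega_k\) be the finite-dimensional space of functions on pointed \(k\)-facets of \(\X\). Define the weighted successor operator \(T_k\) on \(\Omega_k\) by
\[
(T_kf)(\dot C)=u^{l_A(\dot C)}\sum_{e:\,s(e)=\dot C} f(t(e)),
\]
summing over the arrows of \(\mathcal D_k(\X)\). Primitive closed directed paths, counted with cyclic equivalence, give the usual identity
\[
Z_k^\epsilon(\X,u)^{-1}=\det\bigl(I-(-1)^{k+1}T_k\bigr).
\]
Here the sign \((-1)^{(k+1)l_G}\) is produced by the factor \((-1)^{k+1}\) attached to each arrow. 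This uses only that \(\mathcal D_k(\X)\) is a finite digraph, with multiplicities recorded by arrows. It therefore reduces the right-hand side to
\[
\prod_k \det\bigl(I-(-1)^{k+1}T_k\bigr)^{(-1)^k}.
\]

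\emph{Step 2: a complex and Laplacian comparison.} Assemble the spaces \(\Omega_k\), \(0\le k\le n-1\), into a cochain complex. The coboundary forgets the pointing and uses the alternating sum over face incidences in the \(\Delta\)-complex \(\scrB_\Gamma\). We then introduce endomorphisms \(\Delta_k\) (Laplacian-type, built from the successor operators and the differentials) that intertwine \(T_k\) with lower-dimensional data. The target is an identity of the following shape: the factor \(I-(-1)^{k+1}T_k\) on \(\Omega_k\) is equivalent, up to an explicit triangular factor, to an operator pulled back from \(\Omega_{k-1}\) and \(\Omega_{k+1}\). The contributions of \(Z_k\) then telescope in the alternating product, except on \(\Omega_0\) and on a residual piece.

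On \(\Omega_0\), functions on vertices (that is, on \(\Gamma\backslash G/K\) over the various vertex types), the surviving operator should be a polynomial in \(u\) whose coefficients are the spherical Hecke operators \(A_1,\dots,A_{n-1}\). These are the operators summing over neighbours of each type. By Satake,
\[
\det\Bigl(\sum_{j=0}^n(-1)^jq^{j(j-1)/2}A_ju^j\Bigr)=L(\Gamma,q^{(n-1)/2}u)^{-1}
\]
after the normalization \(A_0=A_n=I\), which is standard for \(\GL_n\) Hecke polynomials. Since \(\Gamma\) is type preserving, each vertex type contributes the same determinant, and that is accounted for properly. The residual contributions are powers of \((1-u^n)\), one per facet in each dimension with sign \((-1)^k\), and they combine to \((1-u^n)^{\chi(\X)}\). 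The \(u^n\) arises because a full rotation of the circular type of a pointed facet has algebraic length \(n\).

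\emph{Step 3: the local computation (main obstacle).} Everything reduces to identifying the compositions of successor operators with coboundaries in terms of neighbourhood counts. This is a purely local computation: by Lemma~\ref{lem:local-successor} it can be carried out in the building, i.e.\ in the lattice model of Section~3. There one counts, for a pointed facet \([L_0,\dots,L_k]\), the subspaces of \(\pi^{-1}L_k/L_1\) complementary to \(L_0/L_1\). Equivalently, opposite chambers in a finite Grassmannian give powers of \(q\), and the counting is over flags in \(\kappa^n\). The relations needed are of the form "sum over all flags refining a given one \(=\) sum over opposite extensions", and these should invert via Möbius inversion on the poset of ordered partitions (compositions) of \(n\) ordered by refinement. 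I expect this Möbius inversion, together with tracking the powers of \(q\) (Gaussian binomials) and the signs so that they reproduce \(q^{j(j-1)/2}A_j\) and exactly \(\pm u^n\), to be the hard step. I would first work out \(n=3\) against~\cite{KL} to fix the normalizations, then prove the general inversion formula by induction on the number of parts.
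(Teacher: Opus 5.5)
Your Step~1 and the degree-$0$ Satake identification agree with the paper. The gap is Step~2, which is where the theorem is actually proved. There you name neither the operators nor the reason the alternating product collapses. The one concrete choice you do make, a coboundary that forgets the pointing, is not what the argument needs. The paper's complex keeps the pointing and is $u$-twisted:
\[(d_if)[a(0,i+1)]=u^{[a_0:a_1]}f[a(1,i+1)]+\sum_{k=1}^{i+1}(-1)^kf[a(0,\underline{k},i+1)].\]
It then builds $\delta_i$ from operators $R_i$, which insert a lattice $b$ strictly between $a_0$ and $a_1$ with weight $\mu([a_0:b])u^{[a_0:b]}$, summed over cyclic re-pointings. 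The decisive point is that $\Phi_i=d_{i-1}\delta_i+\delta_{i+1}d_i+(1-u^n)$ is a cochain automorphism homotopic to the scalar $1-u^n$. Euler--Poincar\'e then gives $\prod_i\det(\Phi_i\mid C_i(\X))^{(-1)^i}=(1-u^n)^{\sum_i(-1)^i(i+1)V_i}$, and this is what your ``telescoping'' has to be.

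The twist matters already in degree $0$. An untwisted $d_0$ kills constants, so $\delta_1d_0+(1-u^n)$ would act on them by $1-u^n$. But $\sum_{j=0}^n(-1)^jq^{j(j-1)/2}u^j\hat A_j$ acts on constants by $\prod_{m=0}^{n-1}(1-q^mu)$, so no choice of $\delta_1$ would give the Hecke polynomial. With the twist, $\Phi_0$ equals it exactly. Your count of one factor $1-u^n$ per facet with sign $(-1)^k$ likewise has no source in your plan. In the paper it is $\sum_i(-1)^i\bigl((i+1)V_i-iV_i\bigr)=\chi(\X)$, where the $iV_i$ comes from $\det\Phi_i=(1-u^n)^{iV_i}\det(I-(-1)^{i+1}T_i)$ for $i\ge1$.

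Step~3 aims at an identity that is false. The M\"obius function needed is that of the subspace lattice of $a_0/\pi a_0$, namely $\mu(m)=(-1)^mq^{m(m-1)/2}$; this is where $q^{j(j-1)/2}$ comes from. Compositions ordered by refinement form a Boolean lattice with $\mu=\pm1$. More seriously, the inversion does not produce opposite extensions at the operator level. The paper factors $\Phi_i=(1-u^n)W_i^{-1}J\bigl(I+(-1)^i\Sigma_i\bigr)J^{-1}Q_i^{-1}$ with $\det W_i=(1-u^n)^{V_i}$ and $\det Q_i=1$. However, $\Sigma_i$ sums over $b$ subject only to $b+a_{i+1}=a_i$, without the condition $b\cap a_{i+1}=a_{i+2}$. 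Already for $n=3$, $i=1$, on pointed edges of type $(2,1)$, one has $\Sigma_1\neq T_1$.

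Agreement holds only after taking traces. Along each step, the last part $\dim a_i/a_{i+1}$ of the ordered type weakly increases, with equality exactly when the intersection condition holds. Around a closed path, which returns to a $\Gamma$-translate, this quantity is therefore forced to be constant. Hence $\operatorname{tr}\Sigma_i^r=\operatorname{tr}T_i^r$ and the determinants agree. Your plan has no substitute for this cycle argument.

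Finally, the degree-$0$ determinant does not split by vertex type. Here $C_0(\X)=L^2(\Gamma\backslash G/K)$ as a whole, and $\hat A_j$ shifts types by $j$. Type preservation is used instead to give $\X$ its $\Delta$-complex structure and well-defined ordered types.
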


\section{Proof of the Main Theorem}
Throughout this section we write $G=G_{\ad}$.
We prove the main theorem by comparing the Hecke determinant for the unramified
\(L\)-function with the successor-operator determinants for the
\(\epsilon\)-twisted zeta functions.  The remaining determinant computation is
isolated in Section~8.

\subsection{L-functions as Hecke series}\label{s:hecke-series}

Let $K$ be the standard maximal compact subgroup of $G$, namely the stabilizer
of the homothety class of the standard lattice.
The spherical Hecke algebra $H(G//K)$ is generated by the double cosets
\[
A_i
=
K
\begin{pmatrix}
 \pi I_i & \\
 & I_{n-i}
\end{pmatrix}
K,
\qquad
i=0,\ldots,n,
\]
where $A_0=A_n=K$ corresponds to the identity operator.

Each $A_i$ acts on $K$-invariant functions in $L^2(\Gamma\backslash G)$ by
\[
(A_i f)(x)
=
\sum_{g\in A_i/K} f(xg),
\qquad
f\in L^2(\Gamma\backslash G)^K.
\]

Let $(\rho,V)$ be an irreducible unramified representation of $G$ with Satake
parameter
\[
s(\rho)
=
\left[
\begin{pmatrix}
\lambda_1 & & \\
& \ddots & \\
& & \lambda_n
\end{pmatrix}
\right]
\in G_{\sc}(\C),
\]
the complex dual group of \(G_\ad\). Then $V^K$ is one-dimensional.
By the Satake isomorphism \cite{Gr}, the operator $A_i$ acts on $V^K$ by the scalar
\[
q^{i(n-i)/2}
\sum_{1\le m_1<\cdots<m_i\le n}
\lambda_{m_1}\cdots\lambda_{m_i}.
\]

Consequently, the local $L$-factor attached to $\rho$ and the standard
representation can be written as
\begin{align*}
L(\rho,q^{(n-1)/2}u)
&=
\prod_{j=1}^n
\left(1-q^{(n-1)/2}\lambda_j u\right)^{-1}
\\
&=
\det\!\left(
\sum_{i=0}^n (-q^{(i-1)/2}u)^i A_i
\;\middle|\;
V^K
\right)^{-1},
\end{align*}
where both $A_0$ and $A_n$ act as the identity on $V^K$.

\begin{theorem}\label{thm:Hecke-series}
The unramified $L$-function of $\Gamma$ satisfies
\[
L(\Gamma,q^{(n-1)/2}u)
=
\det\!\left(
\sum_{i=0}^n (-q^{(i-1)/2}u)^i A_i
\;\middle|\;
L^2(\Gamma\backslash G/K)
\right)^{-1}.
\]
\end{theorem}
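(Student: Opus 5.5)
The plan is to decompose the finite-dimensional space \(L^2(\Gamma\backslash G/K)\) according to the unramified constituents of \(L^2(\Gamma\backslash G)\), and then apply the single-representation identity already displayed before the theorem. Since \(\Gamma\) is cocompact, \(L^2(\Gamma\backslash G)\) is a Hilbert direct sum \(\widehat{\bigoplus}_\rho m(\rho)\,\rho\) of irreducible unitary representations. Taking \(K\)-fixed vectors commutes with this decomposition, because \(K\)-invariants are obtained by the bounded projection \(f\mapsto\int_K \rho(k)f\,dk\), which respects each summand. Hence
\[
L^2(\Gamma\backslash G/K)=\bigoplus_{\rho} m(\rho)\,V_\rho^K .
\]
Here only \(\rho\) with \(V_\rho^K\neq0\) contribute, i.e.\ the unramified ones. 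Each \(V_\rho^K\) is one-dimensional, and the sum is finite because the left side has dimension \(\#(\Gamma\backslash G/K)\), as noted in Section~\ref{lfunction}.

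Next, I would check that each Hecke operator \(A_i\), acting as \((A_if)(x)=\sum_{g\in A_i/K}f(xg)\), preserves this decomposition. The point is that \(A_i\) is the action of the convolution algebra element \(\mathbf 1_{KgK}\) through right translation, \(f\mapsto\int_G \mathbf 1_{A_i}(g)\,\rho(g)f\,dg\), with Haar measure normalized so that \(K\) has volume \(1\). Thus \(A_i\) commutes with the \(G\)-equivariant projections onto isotypic pieces, and on each copy of \(V_\rho^K\) it acts by the Satake scalar \(q^{i(n-i)/2}e_i(\lambda_1,\dots,\lambda_n)\). Consequently the operator \(P(u):=\sum_{i=0}^n(-q^{(i-1)/2}u)^iA_i\) is block diagonal with scalar blocks, and its determinant factors as
\[
\det\bigl(P(u)\mid L^2(\Gamma\backslash G/K)\bigr)=\prod_\rho \det\bigl(P(u)\mid V_\rho^K\bigr)^{m(\rho)} .
\]
Then I would apply the displayed identity \(L(\rho,q^{(n-1)/2}u)=\det(P(u)\mid V_\rho^K)^{-1}\) to each factor. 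Taking inverses of both sides, and using the definition \(L(\Gamma,u)=\prod_\rho L(\rho,u)^{m(\rho)}\), gives the theorem.

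The substantive verification is the scalar identity on \(V_\rho^K\), which the paper records just before the theorem. It amounts to the exponent check \(q^{i(i-1)/2}\cdot q^{i(n-i)/2}=q^{i(n-1)/2}\), so that \(\sum_i(-1)^iq^{i(n-1)/2}u^ie_i(\lambda)=\prod_j(1-q^{(n-1)/2}\lambda_ju)\). At \(i=n\) one also needs \(\prod_j\lambda_j=1\), which holds because \(s(\rho)\in G_\sc(\mathbb C)=\SL_n(\mathbb C)\); this is consistent with \(A_n\) being the identity on \(V^K\), as \(\pi I_n\) is trivial in \(G_\ad\).

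The main obstacle I expect is justifying that the multiplicity of the eigencharacter of \(\rho\) on \(L^2(\Gamma\backslash G/K)\) is exactly \(m(\rho)\). This needs \(\dim V_\rho^K=1\), which is standard for unramified representations (\(K\) hyperspecial, Gelfand pair), together with the fact that non-unramified constituents have no \(K\)-fixed vectors, which holds by definition. Once these are granted, the rest is formal linear algebra on a finite-dimensional space.
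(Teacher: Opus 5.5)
Your proposal is correct and follows the same route as the paper. You decompose $L^2(\Gamma\backslash G)$ discretely, pass to $K$-fixed vectors to get $\bigoplus_\rho m(\rho)V_\rho^K$ with one-dimensional summands, evaluate the Hecke polynomial on each summand through the Satake scalars, and multiply the determinants. Your additional checks only make explicit what the paper leaves implicit: the exponent identity $i(i-1)/2+i(n-i)/2=i(n-1)/2$, the condition $\prod_j\lambda_j=1$ matching $A_n=\mathrm{Id}$, and the convolution description of $A_i$, which shows it preserves the decomposition.
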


\begin{proof}
We compute the determinant on each unramified isotypic component and then
multiply over the spectrum.
Since $\Gamma$ is cocompact, the representation $L^2(\Gamma\backslash G)$
decomposes discretely as a Hilbert direct sum of irreducible unitary
representations.
Restricting to $K$-fixed vectors yields
\[
L^2(\Gamma\backslash G/K)
=
\bigoplus_{\rho}
m(\rho)\, V_\rho^K,
\]
where $\rho$ ranges over irreducible unramified representations and $m(\rho)$
denotes the multiplicity of $\rho$ in $L^2(\Gamma\backslash G)$.

On each one-dimensional space $V_\rho^K$, the operator
\(\sum_{i=0}^n (-q^{(i-1)/2}u)^i A_i\) acts by the scalar
\[
\det(I-q^{(n-1)/2}s(\rho)u).
\]
Taking determinants over $L^2(\Gamma\backslash G/K)$ and multiplying over all
$\rho$ gives the stated identity.
\end{proof}

\subsection{Cohomology on pointed facets}

The following complex is introduced to compare the two determinant expressions
that arise from the main theorem: degree \(0\) recovers the Hecke operator, while
the higher degrees recover the successor adjacency operators.

Recall that a pointed $k$-facet $(C,v)$ of $\scrB$ can be represented by a chain of
lattices
\[
L_0 \supsetneq L_1 \supsetneq \cdots \supsetneq L_k \supsetneq \pi L_0,
\]
where $L_0$ represents the vertex $v$.

We use the following notation throughout this section.  For a pointed
\(r\)-facet \(a=[a_0,\ldots,a_r]\), extend its lattice chain periodically by
\[
a_{j+r+1}=\pi a_j\qquad(j\in\mathbb Z).
\]
For \(s\le t\le s+r\), write
\(a(s,t)=(a_s,\ldots,a_t)\), and write
\(a(s,\underline{j},t)\) for the same sequence with \(a_j\) omitted.
Square brackets around such a sequence denote the pointed facet that it
represents; notation such as \([b,a(s,t)]\) denotes concatenation.

For \(0\le k\le n-1\), let $C_k(\scrB)$ (resp.\ $C_k(\X)$) denote the space of
$\C(u)$-valued functions on pointed $k$-facets of $\scrB$ (resp.\ $\X$), where
$\C(u)$ is the field of rational functions in one variable $u$.
Set \(C_k(\scrB)=C_k(\X)=0\) for \(k<0\) or \(k\ge n\).
The group $G$ acts naturally on $C_k(\scrB)$, and we identify
\[
C_k(\X)=C_k(\scrB)^\Gamma
\]
as the $\Gamma$-invariant subspace.

Using the notation of Section~\ref{successor}, define coboundary operators,
for \(0\le i\le n-2\),
\[
d_i : C_i(\scrB)\longrightarrow C_{i+1}(\scrB)
\]
by the formula
\[
(d_i f)\bigl[a(0,i+1)\bigr]
=
u^{[a_0:a_1]}\, f\bigl[a(1,i+1)\bigr]
+
\sum_{k=1}^{i+1} (-1)^k
f\bigl[a(0,\underline{k},i+1)\bigr],
\]
where $a=[a(0,i+1)]$ denotes a pointed $(i+1)$-facet.
At the two ends of the complex, set \(d_{-1}=d_{n-1}=0\).

A direct verification shows that
\[
d_i\circ d_{i-1}=0
\qquad(0\le i\le n-1),
\]
so $(C_\bullet(\scrB),d_\bullet)$ is a cochain complex.
Since the action of $G$ commutes with $d_i$, the operators descend to $C_i(\X)$,
and we define the pointed cohomology of $\X$ by
\[
H^i(\X,\C(u))
=
\ker(d_i)/\operatorname{im}(d_{i-1}).
\]

\subsection{Adjacency operators}

For $k=0$, $C_0(\scrB)$ is the space of functions on vertices.
For $i=1,\ldots,n-1$, define the vertex adjacency operators $\hat A_i$ by
\[
(\hat A_i f)([a_0])
=
\sum_{[a_1]:[a_0:a_1]=i} f([a_1]),
\]
where $[a_0:a_1]=\dim_\kappa(a_0/a_1)$.

Identifying the vertex set $\scrB^{(0)}$ with $G/K$, the operators $\hat A_i$
coincide with the Hecke operators $A_i$ introduced in
Section~\ref{s:hecke-series}.
Thus Theorem~\ref{thm:Hecke-series} can be rewritten as follows.

\begin{theorem}\label{thm:Hecke-series-vertex}
We have
\[
L(\Gamma,q^{(n-1)/2}u)
=
\det\!\left(
\sum_{i=0}^n (-q^{(i-1)/2}u)^i \hat A_i
\;\middle|\;
C_0(\X)
\right)^{-1}.
\]
\end{theorem}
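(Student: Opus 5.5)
The statement follows from Theorem~\ref{thm:Hecke-series} once we identify the Hecke operators $A_i$ on $L^2(\Gamma\backslash G/K)$ with the vertex adjacency operators $\hat A_i$ on $C_0(\X)$. The determinant is then unchanged when we pass from one space to the other. We carry this out in three steps.

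First, we identify the spaces. The map $g\mapsto gL_\st$ gives a $G$-equivariant bijection $G/K\simeq\scrB^{(0)}$, because $K$ is the stabilizer of the standard vertex and $G=G_\ad$ acts transitively on vertices. Taking $\Gamma$-invariant, $\C(u)$-valued functions, $L^2(\Gamma\backslash G/K)\otimes\C(u)$ becomes the space of $\Gamma$-invariant functions on $\scrB^{(0)}$, which is $C_0(\scrB)^\Gamma=C_0(\X)$. Here we use that $\Gamma\backslash G/K$ is finite, so $L^2$ is simply the space of all functions. The determinant of a matrix with entries in $\C[u]$ does not depend on whether we view it over $\C$-coefficients extended to $\C(u)$.

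Second, we match the operators. For $f$ on $G/K$ and a vertex $x=[xL_\st]$, we have $(A_if)(x)=\sum_{g\in A_i/K}f(xg)$. The cosets $gK\subset K\,\mathrm{diag}(\pi I_i,I_{n-i})K$ correspond to the lattices $gL_\st$ with $\pi L_\st\subset gL_\st\subset L_\st$ and $\dim_\kappa(L_\st/gL_\st)=i$. This correspondence uses the elementary divisor theorem: $K$ acts transitively on such sublattices, and the stabilizer of $\mathrm{diag}(\pi I_i,I_{n-i})L_\st$ in $K$ matches the coset count. Translating by $x$, the neighbours $[a_1]$ of $[a_0]=x$ with $[a_0:a_1]=i$ are exactly the classes $xgL_\st$. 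We must check that the representative $a_1$ with $\pi a_0\subset a_1\subset a_0$ is unique, so that each neighbouring class is counted once. This holds because $1\le i\le n-1$ and the index $[a_0:a_1]$ is computed using the representative squeezed between $a_0$ and $\pi a_0$. Hence $A_i=\hat A_i$ for $1\le i\le n-1$. For $i=0$ and $i=n$ both operators are the identity, with $\hat A_0=\hat A_n:=\mathrm{id}$ by convention.

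Third, we substitute this equality into the formula of Theorem~\ref{thm:Hecke-series}, which gives the claim. The only delicate point is the second step: the bookkeeping of homothety classes in $G_\ad$ versus lattices, and the verification that the double coset $A_i$ decomposes into single cosets in bijection with the index-$i$ neighbours. We expect no further obstacle.
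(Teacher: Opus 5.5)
Your proposal is correct and follows the paper's route: the paper obtains this statement directly from Theorem~\ref{thm:Hecke-series} by identifying the vertex set with $G/K$, so that $L^2(\Gamma\backslash G/K)$ becomes $C_0(\X)$ after extending scalars, and by noting that $\hat A_i$ coincides with the Hecke operator $A_i$. Your extra checks are exactly the details the paper leaves implicit, namely that $gK\mapsto[gL_\st]$ is injective because $K$ is the full stabilizer of $[L_\st]$, and that for $1\le i\le n-1$ each neighbouring class has a unique representative between $a_0$ and $\pi a_0$.
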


For $1\le k\le n-1$, define the combinatorial adjacency operator on the
quotient directly from its local successor digraph by
\[
T_k(u): C_k(\X)\longrightarrow C_k(\X),
\qquad
(T_k f)(x)
=
\sum_{\substack{e\in\mathcal D_k(\X)\\s(e)=x}}
u^{l_A(x)}f\bigl(t(e)\bigr).
\]
Thus parallel successor incidences contribute separate summands.  On the
building, the analogous formula is
\[
T_k(u): C_k(\scrB)\longrightarrow C_k(\scrB)
\]
by
\[
(T_k f)(x)
=
\sum_{y\in\Succ(x)} u^{l_A(x)} f(y),
\]
where the sum runs over all successors $y$ of the pointed $k$-facet $x$.
Here \(\Succ(x)\) denotes the finite set of successor pointed facets of \(x\)
in the building.  The latter operator is $G$-equivariant, and under
Lemma~\ref{lem:local-successor} its restriction to \(\Gamma\)-invariants is the
local quotient operator just defined.  All traces and determinants below are
taken on the finite-dimensional space $C_k(\X)$.

\begin{lemma}
Let \(1\le k\le n-1\), and let \(T_k=T_k(u)\) act on \(C_k(\X)\). For each
$m\ge1$,
\[
\operatorname{tr}(T_k^m)
=
\sum_{\scrC} u^{l_A(\scrC)},
\]
where $\scrC$ runs over all closed directed paths in \(\mathcal D_k(\X)\) of
geometric length $m$, counted with a distinguished initial vertex as in the
trace.
\end{lemma}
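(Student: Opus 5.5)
The trace identity will follow from writing \(T_k\) as a weighted adjacency matrix of the finite digraph \(\mathcal D_k(\X)\) and expanding matrix powers as sums over paths.

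Let \(P_k\) be the finite set of pointed \(k\)-facets of \(\X\), and let \(\delta_y\) denote the indicator function of \(y\in P_k\). These functions form a basis of \(C_k(\X)\) over \(\C(u)\). In this basis the definition of \(T_k(u)\) gives the matrix entries
\[
(T_k)_{x,y}=u^{l_A(x)}\,\#\{e\in\mathcal D_k(\X): s(e)=x,\ t(e)=y\},
\]
so parallel arrows contribute with multiplicity. Expanding the \(m\)-th power, we get
\[
(T_k^m)_{x,x}=\sum_{x_1,\ldots,x_{m-1}}\prod_{i=0}^{m-1}(T_k)_{x_i,x_{i+1}},
\qquad x_0=x_m=x .
\]
Each product of arrow counts is the number of arrow sequences \((e_0,\ldots,e_{m-1})\) with \(s(e_i)=x_i\) and \(t(e_i)=x_{i+1}\). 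Hence \((T_k^m)_{x,x}\) is a sum over closed directed paths of geometric length \(m\) based at \(x\), with the arrow sequence recorded. Each such path carries the weight \(\prod_i u^{l_A(s(e_i))}\).

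By the definition of lengths in Section~6, this weight equals \(u^{l_A(\scrC)}\). Summing over all \(x\in P_k\) yields the trace. The result counts closed paths together with a distinguished initial vertex \(s(e_0)\), which is exactly the meaning specified in the statement.

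The only point needing care is multiplicity. Closed paths must be understood as sequences of arrows, not merely of pointed facets, so that distinct parallel incidences are distinguished. The matrix entry already counts arrows, so this bookkeeping is consistent. No lifting to the building and no use of Theorem~\ref{twogeodesics} is needed.
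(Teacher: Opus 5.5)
Your proof is correct, and it is the standard weighted-adjacency-matrix expansion the paper takes for granted: the paper states this lemma without proof, since \(T_k\) has already been defined directly on the finite digraph \(\mathcal D_k(\X)\) with parallel arrows contributing separate summands. Your bookkeeping also matches the paper's convention: the trace counts closed arrow sequences \((e_0,\ldots,e_{m-1})\) with a marked starting position, which is exactly what the paper uses next when it says that a primitive cycle \(\scrC\) has \(l_G(\scrC)\) based representatives.
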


On the other hand, for $|u|$ small,
\[
\log Z_k(\X,u)
=
\sum_{[\scrC]\ \mathrm{primitive}}
\sum_{r=1}^\infty \frac{u^{r l_A(\scrC)}}{r}.
\]
Every closed directed path arises as a power of a unique primitive cyclic
arrow sequence, and the number of based representatives of a primitive cycle
equals its geometric length.
Therefore,
\begin{align*}
\log Z_k(\X,u)
&=
\sum_{m=1}^\infty \frac{1}{m}\operatorname{tr}(T_k^m)
=
-\operatorname{tr}\log(I-T_k)
=
\log\det(I-T_k)^{-1}.
\end{align*}

A similar argument yields
\[
Z_k^\epsilon(\X,u)
=
\det\!\left(I-(-1)^{k+1}T_k\right)^{-1}.
\]

\begin{theorem}\label{thm:zeta-determinant}
For \(1\le k\le n-1\), with \(T_k=T_k(u)\) acting on \(C_k(\X)\), the zeta
functions are given by the rational functions
\[
Z_k(\X,u)=\det(I-T_k)^{-1},
\qquad
Z_k^\epsilon(\X,u)=\det\!\left(I-(-1)^{k+1}T_k\right)^{-1}.
\]
\end{theorem}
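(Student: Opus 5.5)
The plan is the standard Bass--Ihara argument: expand $\log\det$ as a power series in $u$, use the trace lemma to count based closed paths, and regroup by primitive cycles. I treat everything as formal power series in $u$. Each entry of $T_k=T_k(u)$ is a multiple of a positive power of $u$, since $l_A(x)=\lambda_0\ge1$. Hence $T_k$ is topologically nilpotent over $\mathbb C[[u]]$, and the identity
\[
\log\det(I-cT_k)^{-1}=\sum_{m\ge1}\frac{c^m}{m}\operatorname{tr}(T_k^m)
\]
holds for any scalar $c\in\{\pm1\}$. Because $C_k(\X)$ is finite-dimensional (here we use that $\Gamma$ is cocompact), $\det(I-cT_k)$ is a polynomial in $u$ with constant term $1$. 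So an equality of formal power series upgrades to an equality of rational functions. This also justifies the convergence for $|u|$ small that the statement tacitly uses.

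\textbf{Step 1 (the trace lemma).} Since $T_k$ is the weighted adjacency operator of the finite digraph $\mathcal D_k(\X)$, the entry $(T_k^m)_{x,x}$ sums over arrow sequences $e_1,\dots,e_m$ with $s(e_1)=x$, $t(e_i)=s(e_{i+1})$, and $t(e_m)=x$. The weight of such a sequence is $\prod u^{l_A(s(e_i))}=u^{l_A(\scrC)}$. Parallel arrows are separate summands, matching the way $\mathcal D_k(\X)$ defines paths by arrow sequences. Summing over $x$ gives $\operatorname{tr}(T_k^m)$ as the sum of $u^{l_A(\scrC)}$ over based closed paths of geometric length $m$.

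\textbf{Step 2 (primitive decomposition).} A based closed arrow sequence of length $m$ is uniquely an $r$-fold repetition of a primitive cyclic sequence $\scrC_0$ of length $m_0=m/r$. This is word combinatorics on cyclic words over the alphabet of arrows, using the primitive root of a word. A primitive cyclic class has exactly $l_G(\scrC_0)$ distinct based representatives, since a primitive word has no nontrivial rotational symmetry. Also $l_A(\scrC_0^r)=r\,l_A(\scrC_0)$ and $l_G(\scrC_0^r)=r\,l_G(\scrC_0)$. Therefore
\[
\sum_{m}\frac{c^m}{m}\operatorname{tr}(T_k^m)=\sum_{[\scrC_0]}\sum_{r\ge1}\frac{l_G(\scrC_0)}{r\,l_G(\scrC_0)}\bigl(c^{l_G(\scrC_0)}u^{l_A(\scrC_0)}\bigr)^r=\sum_{[\scrC_0]}-\log\bigl(1-c^{l_G(\scrC_0)}u^{l_A(\scrC_0)}\bigr).
\]

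\textbf{Step 3 (specializing $c$).} Taking $c=1$ gives $Z_k$. Taking $c=(-1)^{k+1}$ gives $c^{l_G(\scrC_0)}=(-1)^{(k+1)l_G(\scrC_0)}=\epsilon(\scrC_0)$, which is exactly the twist in $Z_k^\epsilon$. Exponentiating then yields both determinant formulas in the statement.

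The only delicate point is Step 2 in the presence of loops and parallel arrows. Primitivity and the cyclic-shift equivalence must be defined on arrow sequences, not on vertex sequences, and the paper does set up closed $k$-geodesics that way. Once the count of based representatives per primitive class is verified to equal $l_G$, the rest is formal.
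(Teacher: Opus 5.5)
Your proposal is correct and follows essentially the same route as the paper. The shared steps are the trace lemma for the weighted successor digraph $\mathcal D_k$, the decomposition of based closed arrow sequences into $r$-th powers of primitive cyclic classes (each class having $l_G(\scrC_0)$ based representatives), and the identity $\log\det(I-M)^{-1}=\sum_{m}\operatorname{tr}(M^m)/m$. Your scalar $c\in\{\pm1\}$, with $c^{l_G(\scrC_0)}=\epsilon(\scrC_0)$ when $c=(-1)^{k+1}$, simply makes explicit the ``similar argument'' the paper invokes for $Z_k^\epsilon$.
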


\subsection{M\"obius functions and Laplacians}

Let $(S,\le)$ be a locally finite partially ordered set.
The M\"obius function $\mu:S\times S\to\mathbb Z$ is the unique function satisfying
\begin{itemize}
\item $\mu(x,x)=1$ for all $x\in S$;
\item $\mu(x,y)=0$ unless $x\ge y$;
\item for $x>y$,
\[
\sum_{x\ge z\ge y}\mu(x,z)=0.
\]
\end{itemize}

When $S$ is the lattice of subspaces of a finite-dimensional vector space $V$
over a finite field $\mathbb F_q$, ordered by inclusion, the M\"obius function is
given by
\[
\mu(x,y)=(-1)^m q^{m(m-1)/2},
\qquad m=\dim(x/y).
\]
We write $\mu(m)$ for this value when only the codimension $m$ matters.

Throughout, we abbreviate $a\supsetneq b$ by $a\supset b$.

\medskip
\noindent\textbf{The operators $R_i$ and $\delta_i$.}
Using the notation of Section~\ref{successor}, for \(1\le i\le n-1\) define
operators
\[
R_i : C_i(\scrB)\longrightarrow C_{i-1}(\scrB)
\]
by
\[
(R_i f)\bigl[a(0,i-1)\bigr]
=
(-1)^i
\sum_{a_0\supset b\supset a_1}
u^{[a_0:b]}\,
\mu\!\left([a_0:b]\right)\,
f\bigl[b,a(1,i)\bigr].
\]

For \(1\le i\le n-1\), define
\[
\delta_i : C_i(\scrB)\longrightarrow C_{i-1}(\scrB)
\]
by
\[
(\delta_i f)\bigl[a(0,i-1)\bigr]
=
\sum_{j=0}^{i-1}
(-1)^{(i+1)j}
u^{[a_0:a_j]}\,
(R_i f)\bigl[a(j,j+i-1)\bigr],
\]
with the convention $a_j=\pi a_{j-i}$ for $j\ge i$.
At the ends set \(\delta_0=\delta_n=0\).

In general, $\delta_i$ is not a coboundary operator; in particular,
$\delta_{i}\circ\delta_{i+1}\neq 0$.

\medskip
\noindent\textbf{Laplacians.}
For \(0\le i\le n-1\), define Laplacian-type operators
\[
\Delta_i
=
d_{i-1}\circ\delta_i+\delta_{i+1}\circ d_i
:
C_i(\scrB)\longrightarrow C_i(\scrB).
\]
Each $\Delta_i$ is a $G$-equivariant cochain endomorphism, and it induces the zero
operator on cohomology:
\[
\Delta_i\equiv 0
\qquad\text{on } H^i(\scrB,\C(u)).
\]
Since this chain-homotopy identity is $G$-equivariant, it descends to
$C_\bullet(\X)$; hence $\Delta_i$ also acts trivially on $H^i(\X,\C(u))$.

Define
\[
\Phi_i=\Delta_i+(1-u^n)\,\mathrm{Id}_{C_i(\scrB)}.
\]
Then $\Phi_i$ is a cochain endomorphism of $C_\bullet(\scrB,\C(u))$, and descends to
an endomorphism of $C_i(\X)$.

\medskip
\noindent\textbf{Determinant identities.}
The key computation is the following.

\begin{theorem}\label{thm:Phi-determinants}
The operators \(\Phi_i\) satisfy the following determinant identities:
\begin{itemize}
\item[(1)]
\[
\det(\Phi_0\mid C_0(\X))
=
\det\!\left(
\sum_{i=0}^n (-q^{(i-1)/2}u)^i \hat A_i
\;\middle|\;
C_0(\X)
\right).
\]

\item[(2)]
For $i\ge1$,
\[
\det(\Phi_i\mid C_i(\X))
=
(1-u^n)^{iV_i}\,
\det\!\left(I-(-1)^{i+1}T_i\mid C_i(\X)\right),
\]
where
\[
V_i=\frac{1}{i+1}\dim C_i(\X)
\]
is the number of $i$-facets of $\X$.
\end{itemize}
\end{theorem}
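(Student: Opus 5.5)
We will compute \(\Phi_i\) explicitly as an operator on the building \(C_i(\scrB)\), and then pass to \(\X\) via the \(G\)-equivariance already established. For (1), since \(\delta_0=0\), we have \(\Phi_0=\delta_1 d_0+(1-u^n)\). We will expand \((\delta_1 d_0 f)[a_0]\) with \(\delta_1=R_1\) (only \(j=0\)). The result is
\[
-\sum_{a_0\supset b\supset \pi a_0} u^{[a_0:b]}\mu([a_0:b])\,(d_0f)[b,\pi a_0].
\]
Here
\[
(d_0f)[b,\pi a_0]=u^{[b:\pi a_0]}f[\pi a_0]-f[b]=u^{n-[a_0:b]}f[a_0]-f[b].
\]
The first terms give \(-u^n f[a_0]\sum_{a_0\supset b\supset\pi a_0}\mu([a_0:b])\). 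By the defining property of \(\mu\) on the subspace lattice of \(a_0/\pi a_0\cong\kappa^n\), the full sum over \(a_0\supseteq b\supseteq \pi a_0\) vanishes. The strict sum therefore equals \(-\mu(0)-\mu(n)=-1-(-1)^nq^{n(n-1)/2}\). Combined with \(1-u^n\), this yields \(1\) and \((-1)^n q^{n(n-1)/2}u^n\), which is the \(A_0\) and \(A_n\) terms. The second terms give
\[
\sum_b u^{m}(-1)^m q^{m(m-1)/2} f[b],\qquad m=[a_0:b],
\]
which is exactly \(\sum_{m=1}^{n-1}(-q^{(m-1)/2}u)^m\hat A_m f\). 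Hence \(\Phi_0\) literally equals the Hecke polynomial operator, and (1) follows without taking determinants.

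For (2) we cannot expect equality of operators. Instead we aim for a factorization \(\Phi_i=(I-(-1)^{i+1}T_i)\circ N_i\), or possibly \(N_i\) on the other side, up to conjugation by a sign or permutation. Here \(N_i\) is a \(G\)-equivariant operator preserving each facet, i.e. acting on the \(i+1\) pointings of a fixed \(i\)-facet, with \(\det N_i=(1-u^n)^i\) facetwise. Concretely, we first expand \(d_{i-1}\delta_i f+\delta_{i+1}d_i f\) at \([a(0,i)]\). Terms from \(\delta_{i+1}d_i\) involve \(R_{i+1}\) applied to \(d_if\), hence sums over \(b\) with \(a_j\supset b\supset a_{j+1}\) and evaluations \(f\) on pointed facets of the form \([b,a(\cdots)]\) with one vertex omitted. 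We then split these terms by whether the omitted vertex is \(b\), and by whether \(b\) makes \([b,a(j+1,j+i)]\) opposite to the original facet.

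The Möbius function is used to collapse the sum. Fixing the \((i-1)\)-face, the admissible \(b\) range over subspaces of \(a_j/\pi a_{j+i}\) containing a fixed subspace. Möbius inversion converts the sum over all such \(b\), weighted by \(\mu\), into a sum over complements, which are exactly the opposite facets of Section~\ref{successor}. This produces the successor term \(u^{l_A}\sum_{\Succ}\), with sign \((-1)^{i+1}\) coming from the \((-1)^{(i+1)j}\) and \((-1)^i\) factors. The remaining terms should cancel in pairs against \(d_{i-1}\delta_i\), as in a standard homotopy computation. What survives is a local operator on the \(i+1\) pointings of each facet: the cyclic shift \(P\) of pointing, weighted by \(u^{\lambda_j}\) and signs. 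Its combination with the identity has the shape \(I-(\pm)u^{\lambda}P\)-type cyclic matrices, with determinant \(1-u^{n}\) to the appropriate power. A facet-by-facet computation of this circulant-like \((i+1)\times(i+1)\) determinant will give \((1-u^n)^i\), and multiplying over the \(V_i\) facets gives \((1-u^n)^{iV_i}\).

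The main obstacle is this expansion for \(i\ge1\). Bookkeeping of the periodic index convention \(a_{j+i+1}=\pi a_j\), the signs, and the powers of \(u\) must line up so that the Möbius sums reduce precisely to opposite complements. It must also be exact that the non-successor residue is triangular, namely successor part times a facet-local factor. We would first verify this for \(n=2,3\) against Ihara and \cite{KL}, and then prove the general identity by organizing terms by the position \(j\) of the inserted lattice \(b\).
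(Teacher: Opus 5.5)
Your part (1) is correct and is exactly the paper's computation: \(\Phi_0=\sum_{m=0}^n\mu(m)u^m\hat A_m\) holds already as an operator identity.

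Part (2) has a genuine gap. You propose \(\Phi_i=(I-(-1)^{i+1}T_i)N_i\) (or \(N_i\) on the other side, up to facet-local conjugations) with \(N_i\) facet-local. No such factorization exists once \(n\ge3\), because the non-opposite \emph{same-side} terms do not cancel.

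Here is a concrete instance. Take \(n=3\), \(i=1\), a pointed edge \(x=[a_0,a_1]\) of type \((2,1)\), and \(b\) with \(a_0\supsetneq b\supsetneq a_1\).
\begin{itemize}
\item In \(\delta_2d_1f(x)=(R_2d_1f)[a_0,a_1]\), the last face of \(d_1\) contributes \(-u\,f[b,a_1]\). The other summand of \(\delta_2\) vanishes, since \([a_1:\pi a_0]=1\).
\item In \(d_0\delta_1f(x)=u^2(R_1f)[a_1]-(R_1f)[a_0]\), the summand \(c=\pi b\) of \(R_1\) at \(a_1\) contributes \(-qu^4f[b,a_1]\), because \([a_1:\pi b]=2\) and \(\mu(2)=q\).
\item Nothing else involves this pointed edge.
\end{itemize}
So \(\Phi_1f(x)\) contains \((-u-qu^4)f[b,a_1]\neq0\). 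But \(\{b,a_1\}\) and \(\{a_0,a_1\}\) lie in the common chamber \(\{a_0,b,a_1\}\), so they are neighbors that are not opposite. Any operator of the form (facet-local)\(\circ(I\pm T_1)\circ\)(facet-local) only involves pointings of \(C\) itself and of its opposite neighbors. Your scheme does work for chambers \(i=n-1\), in particular for \(n=2\), so the \(n=2\) check would not expose the problem.

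The paper handles this with two ingredients your plan lacks.

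First, it composes on the right with the non-local operator
\(Q_if[a(0,i)]=\sum_{a_0\supseteq c\supsetneq a_1}u^{[a_0:c]}f[c,a(1,i)]\).
This operator is unipotent for the dominance order on ordered partitions, so \(\det Q_i=1\), and its inverse is given by M\"obius inversion. Together with the facet-local weighted cyclic shift \(W_i\), which has determinant \(1-u^n\) per facet, this gives \(W_i\Phi_iQ_i=(1-u^n)(I+\Lambda_i)\).

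Second, contrary to your expectation, the M\"obius inversion (Lemma~\ref{lemma4}) produces only the sum condition \(b+a_1=a_0\), not complements. After conjugating by the pointing shift, one gets an operator \(\Sigma_i\) that strictly contains \(T_i\). For example, for \(n=3\) and type \((2,1)\) it also allows planes \(b/\pi a_1\neq\pi a_0/\pi a_1\).

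The missing idea for passing from \(\Sigma_i\) to \(T_i\) is monotonicity. Along a \(\Sigma_i\)-step the last part \(\dim a_i/a_{i+1}\) weakly increases, with equality exactly when \(b\cap a_{i+1}=a_{i+2}\). Hence closed walks for the two relations coincide, \(\operatorname{tr}\Sigma_i^r=\operatorname{tr}T_i^r\) for all \(r\), and \(\det(I\pm\Sigma_i)=\det(I\pm T_i)\). The identity in (2) is therefore an equality of determinants that does not come from an operator factorization of the type you propose.
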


For convenience, we write
\[
Z_0^\epsilon(\X,u)=L(\Gamma,q^{(n-1)/2}u).
\]

\medskip
\noindent\textbf{Completion of the proof.}
Combining Theorem~\ref{thm:Phi-determinants} with
Theorem~\ref{thm:Hecke-series-vertex} and Theorem~\ref{thm:zeta-determinant}, we
obtain
\[
\prod_{i=0}^{n-1}\det(\Phi_i\mid C_i(\X))^{(-1)^i}
=
\prod_{i=0}^{n-1}
(1-u^n)^{(-1)^i iV_i}\,
Z_i^\epsilon(\X,u)^{(-1)^{i+1}}.
\]

On the other hand, since $\Phi_i$ reduces to the identity at $u=0$, it is a
cochain automorphism with nonzero determinant.  Every \(i\)-facet has
\(i+1\) pointed realizations, so
\(\dim_{\C(u)}C_i(\X)=(i+1)V_i\).  Standard homological algebra and the
Euler--Poincar\'e identity therefore yield
\begin{align*}
\prod_{i=0}^{n-1}\det(\Phi_i\mid C_i(\X))^{(-1)^i}
&=
\prod_{i=0}^{n-1}\det(\Phi_i\mid H^i(\X,\C(u)))^{(-1)^i} \\
&=
\prod_{i=0}^{n-1}\det(1-u^n\mid H^i(\X,\C(u)))^{(-1)^i} \\
&=
(1-u^n)^{\sum_{i=0}^{n-1}(-1)^i(i+1)V_i}.
\end{align*}

Comparing the two expressions and using
\[
\chi(\X)=\sum_{i=0}^{n-1}(-1)^i V_i,
\]
we obtain
\[
\prod_{i=0}^{n-1} Z_i^\epsilon(\X,u)^{(-1)^{i+1}}
=
(1-u^n)^{\chi(\X)},
\]
or equivalently, since $Z_0^\epsilon(\X,u)=L(\Gamma,q^{(n-1)/2}u)$,
\[
(1-u^n)^{\chi(\X)}L(\Gamma,q^{(n-1)/2}u)
=
\prod_{k=1}^{n-1}Z_k^\epsilon(\X,u)^{(-1)^{k+1}}.
\]
This completes the conceptual proof of the main theorem, assuming the determinant
identity for the operators \(\Phi_i\).

\section{Technical proof of Theorem~\ref{thm:Phi-determinants}}
This section proves the determinant identities for the operators \(\Phi_i\).
\subsection{The determinant of \texorpdfstring{$\Phi_0$}{Phi-0}}

We show that
\[
\det(\Phi_0\mid C_0(\X))
=
\det\!\left(
\sum_{k=0}^n (-q^{(k-1)/2}u)^k\,\hat A_k
\;\middle|\;
C_0(\X)
\right).
\]

Recall that
\[
\begin{aligned}
(d_0 f)[a(0,1)]
&=
u^{[a_0:a_1]}f[a_1]-f[a_0],
\\
(R_1 g)[a_0]
&=
-\!\!\sum_{a_0\supset b\supset \pi a_0}
u^{[a_0:b]}\mu([a_0:b])\,g[b,\pi a_0].
\end{aligned}
\]
and that $\Phi_0=\delta_1 d_0+(1-u^n)\mathrm{Id}$ with $\delta_1=R_1$.

A direct computation gives
\begin{align*}
(\Phi_0 f)[a_0]
&=
(1-u^n)f[a_0]
\\
&\quad
-\sum_{a_0\supset b\supset \pi a_0}
\mu([a_0:b])
\Bigl(
u^{[a_0:b]+[b:\pi a_0]}f[\pi a_0]
-
u^{[a_0:b]}f[b]
\Bigr).
\end{align*}
Since $f[\pi a_0]=f[a_0]$ and $[a_0:b]+[b:\pi a_0]=n$, this simplifies to
\[
(\Phi_0 f)[a_0]
=
\Bigl(1+\mu(n)u^n\Bigr)f[a_0]
+
\sum_{a_0\supset b\supset \pi a_0}
\mu([a_0:b])u^{[a_0:b]}f[b].
\]

Grouping terms by $i=[a_0:b]$ yields
\[
(\Phi_0 f)[a_0]
=
\sum_{i=0}^n \mu(i)u^i\,\hat A_i f[a_0],
\]
where $\hat A_0=\hat A_n=\mathrm{Id}$. Using $\mu(i)=(-1)^i q^{i(i-1)/2}$, we obtain
\begin{align*}
\Phi_0
&=
\sum_{i=0}^n (-1)^i q^{i(i-1)/2}u^i\,\hat A_i
\\
&=
\sum_{i=0}^n (-q^{(i-1)/2}u)^i\,\hat A_i,
\end{align*}
which proves the claimed determinant identity.

\subsection{The determinant of \texorpdfstring{$\Phi_i$}{Phi-i} for
\texorpdfstring{$i>0$}{i greater than 0}}

For $i>0$, we prove
\[
\det(\Phi_i\mid C_i(\X))
=
(1-u^n)^{iV_i}\,
\det\!\left(I-(-1)^{i+1}T_i \mid C_i(\X)\right),
\]
where $V_i$ denotes the number of $i$-facets of $\X$.

To this end, we introduce two endomorphisms of $C_i(\X)$:
\[
(W_i f)[a(0,i)]
=
f[a(0,i)]
+
(-1)^{i+1}u^{[a_0:a_1]}f[a(1,i\!+\!1)],
\]
and
\[
(Q_i f)[a(0,i)]
=
\sum_{a_0 \supseteq c \supset a_1}
u^{[a_0:c]} f[c,a(1,i)].
\]

\begin{theorem}\label{QW}
For \(i\ge1\), the operators \(W_i,Q_i\) on \(C_i(\X)\) satisfy:
\begin{enumerate}
\item $\det(W_i\mid C_i(\X))=(1-u^n)^{V_i}$.
\item $\det(Q_i\mid C_i(\X))=1$.
\end{enumerate}
\end{theorem}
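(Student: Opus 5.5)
Both determinants will be computed by decomposing $C_i(\X)$ into finite pieces on which the operator is block-triangular with computable diagonal blocks. Throughout we use that $\X$ is finite, so all determinants are finite-dimensional; moreover both $W_i$ and $Q_i$ are defined on $C_i(\scrB)$ by $G$-equivariant local formulas, and hence act on $\Gamma$-invariants.

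\emph{Part (1).} The map $[a(0,i)]\mapsto[a(1,i+1)]$ is the cyclic re-pointing of the same facet: it shifts the base vertex to the first vertex, and $a_{i+1}=\pi a_0$. It therefore permutes the $i+1$ pointed realizations of each $i$-facet cyclically. Consequently $C_i(\X)$ splits as a direct sum, over the $V_i$ facets $C$ of $\X$, of $(i+1)$-dimensional subspaces $C_i(C)$, and each is preserved by $W_i$. Fix $C$ and write $x_0,\dots,x_i$ for its pointings in cyclic order, with $x_{j+1}$ the rotation of $x_j$ and $\lambda_j=l_A(x_j)$. On this block, $W_i=I+(-1)^{i+1}DP$, where $P$ is the cyclic permutation matrix and $D=\mathrm{diag}(u^{\lambda_j})$. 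For a cyclic weighted shift one has
\[
\det(I+cDP)=1-(-c)^{i+1}\prod_j u^{\lambda_j},
\]
and here $\sum_j\lambda_j=n$. With $c=(-1)^{i+1}$ we get $(-c)^{i+1}=((-1)^{i})^{i+1}=1$, since $i(i+1)$ is even. Hence each block has determinant $1-u^n$, and the product over the $V_i$ facets is $(1-u^n)^{V_i}$. The only care needed is to confirm that in a $\Delta$-complex quotient the $i+1$ pointings of a facet are distinct; they are, because the vertex types are distinct.

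\emph{Part (2).} Here we use a triangularity argument. The summand with $c=a_0$ contributes $f[a(0,i)]$ with coefficient $u^0=1$. Every other summand has $c\subsetneq a_0$, replacing the base vertex by a lattice $c$ with $a_0\supsetneq c\supsetneq a_1$ and keeping $a(1,i)$ fixed. Such a replacement strictly decreases the first part of the ordered type, $\lambda_0=[a_0:a_1]$, to $[c:a_1]<\lambda_0$, while preserving $\lambda_1,\dots,\lambda_{i-1}$. So filter $C_i(\X)$ by the value of $\lambda_0$: let $F_m$ be the functions supported on pointed facets with $\lambda_0\le m$ (more precisely, one works with the quotient-coordinate filtration). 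Then $Q_i=I+N$, where $N$ maps a delta function at a pointed facet with $\lambda_0=m$ into the span of pointed facets with $\lambda_0<m$. Thus $N$ is strictly triangular with respect to the ordering by $\lambda_0$, and $\det Q_i=1$. As in part (1), we should check that the filtration is well defined on $\X$; it is, since the type of a pointed facet descends to the quotient.

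The main obstacle is bookkeeping rather than conceptual. In part (1) the risk is the sign and parity check $(-1)^{i(i+1)}=1$ together with the identity $\sum\lambda_j=n$. In part (2) we must choose the correct direction of the triangularity: whether $Q_i$ is read as matrix entries indexed by $(a,(c,a(1,i)))$ or the transpose does not change the determinant, but the order must be strict. I would first write out the $i=1$, $n=2$ case to verify both.
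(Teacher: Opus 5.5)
Your proof is correct and follows the paper's: part (1) is the same per-facet cyclic-weighted-shift computation, with the same parity check, and part (2) is the same unipotent-triangularity argument, where grading by $\lambda_0=l_A$ alone is a harmless coarsening of the paper's partial-sum order on ordered partitions. One small slip: $N\delta_y$ is supported on pointed facets with \emph{larger} $\lambda_0$. It is $(Q_if)(x)$ that involves values of $f$ at smaller $\lambda_0$, so the $Q_i$-invariant filtration is by $\lambda_0\ge m$, not $\lambda_0\le m$; as you anticipated, this does not affect the determinant.
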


\begin{proof}
\textup{(1)}
Fix an $i$-facet of $\X$. The subspace spanned by functions supported on its
$(i+1)$ pointed realizations is $W_i$-invariant.
On this subspace, the nontrivial part of $W_i$ is a cyclic weighted shift whose
weights have product
\[
(-1)^{(i+1)^2}u^{[a_0:a_1]+\cdots+[a_i:\pi a_0]}=(-1)^{i+1}u^n,
\]
and for a cyclic weighted shift on $i+1$ basis vectors with weight product
$P$, the determinant of $I$ plus the shift is $1-(-1)^{i+1}P$.  Hence the
determinant of
$W_i$ on this block equals $1-u^n$.
Since there are $V_i$ such independent blocks, we obtain
\[
\det(W_i\mid C_i(\X))=(1-u^n)^{V_i}.
\]

\textup{(2)}
For \(Q_i\), we order pointed facets by the associated ordered partition.
Define a partial order on ordered partitions
$(\lambda_0,\dots,\lambda_i)$ by
\[
(\lambda_0,\dots,\lambda_i)\ge(\lambda_0',\dots,\lambda_i')
\iff
\sum_{j=0}^k \lambda_j \ge \sum_{j=0}^k \lambda_j'
\quad\text{for all }k.
\]
For a pointed $i$-facet $[a(0,i)]$, the non-diagonal summands
$f[c,a(1,i)]$ appearing in $(Q_i f)[a(0,i)]$ correspond to strictly smaller
partitions in this order.
After choosing any linear extension of this partial order, $Q_i$ is upper
triangular with ones on the diagonal, hence unipotent, and
\[
\det(Q_i\mid C_i(\X))=1.
\]
\end{proof}

By M\"obius inversion on the lattice of subspaces, we obtain the following
summation identities involving the operators $Q_i$ and $W_i$.

\begin{lemma}\label{lemma3}
Let \(i\ge1\), let \(f\in C_i(\X)\), and let \([a(0,i)]\) be a pointed
\(i\)-facet. Then
\[
\sum_{a_0 \supseteq b \supset a_1}
\mu([a_0:b])\,u^{[a_0:b]}\,
(Q_i f)[b,a(1,i)]
=
f[a(0,i)].
\]
\end{lemma}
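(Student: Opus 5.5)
We will prove the identity by substituting the definition of \(Q_i\) and exchanging the order of summation, so that the double sum collapses via the defining property of the M\"obius function on the subspace lattice.

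First, for each \(b\) with \(a_0\supseteq b\supset a_1\), note that \([b,a(1,i)]\) is again a pointed \(i\)-facet. Its first two lattices are \(b\supset a_1\), and its last term still satisfies \(a_i\supset \pi a_0\supseteq \pi b\). By definition,
\[
(Q_i f)[b,a(1,i)]=\sum_{b\supseteq c\supset a_1}u^{[b:c]}f[c,a(1,i)].
\]
Substituting this gives a double sum over chains \(a_0\supseteq b\supseteq c\supset a_1\). The two \(u\)-exponents combine additively: \([a_0:b]+[b:c]=[a_0:c]\). Hence the left-hand side equals
\[
\sum_{a_0\supseteq c\supset a_1}u^{[a_0:c]}f[c,a(1,i)]\Bigl(\sum_{a_0\supseteq b\supseteq c}\mu([a_0:b])\Bigr).
\]

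The inner sum is \(\sum_{a_0\ge b\ge c}\mu(a_0,b)\) in the poset of intermediate lattices, ordered by inclusion. This poset is isomorphic, via \(b\mapsto b/\pi a_0\), to the subspace lattice of the \(\kappa\)-vector space \(a_0/\pi a_0\), or more directly of \(a_0/c\). The value \(\mu(a_0,b)=\mu([a_0:b])\) is the stated M\"obius function of that lattice. We need the ``left'' summation identity \(\sum_{x\ge z\ge y}\mu(x,z)=\delta_{x,y}\), which is exactly the third defining property listed in the paper. So the inner sum is \(1\) if \(c=a_0\) and \(0\) otherwise. The only surviving term is then \(c=a_0\), contributing \(u^0f[a(0,i)]\), which gives the claim.

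The main points to check, none of them deep, are the following:
\begin{enumerate}
\item Every intermediate \(b\) and \(c\) yields a legitimate pointed \(i\)-facet, so that all values of \(f\) are defined. This requires \(b\supsetneq a_1\), which holds, and is automatic from the chain condition.
\item The M\"obius convention in the paper sums over the second argument with the first fixed. This is the orientation we need, so no appeal to the dual identity is required.
\end{enumerate}
Since \(Q_i\) and the M\"obius sum are defined on \(C_i(\scrB)\) equivariantly, the identity holds there first. It then holds on \(\Gamma\)-invariants \(C_i(\X)\), using the fact that the lift of the incidences is unique (Lemma~\ref{lem:local-successor}).
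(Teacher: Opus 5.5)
Your proof is correct and follows the paper's argument essentially verbatim: you substitute the definition of \(Q_i\), regroup the double sum over chains \(a_0\supseteq b\supseteq c\supset a_1\), and use the defining property of the M\"obius function to see that \(\sum_{a_0\supseteq b\supseteq c}\mu([a_0:b])\) is \(1\) for \(c=a_0\) and \(0\) otherwise. The closing appeal to Lemma~\ref{lem:local-successor} is unnecessary, since the identity holds pointwise for every \(f\in C_i(\scrB)\) and \(C_i(\X)=C_i(\scrB)^\Gamma\).
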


\begin{proof}
By definition of $Q_i$,
we expand the left-hand side and then collect terms according to the
intermediate lattice \(c\).
\begin{align*}
&\sum_{a_0 \supseteq b \supset a_1}
\mu([a_0:b])u^{[a_0:b]}(Q_i f)[b,a(1,i)]
\\
&\quad =
\sum_{a_0 \supseteq b \supset a_1}
\sum_{b \supseteq c \supset a_1}
\mu([a_0:b])u^{[a_0:c]} f[c,a(1,i)] \\
&\quad =
\sum_{a_0 \supseteq c \supset a_1}
\left(\sum_{a_0 \supseteq b \supseteq c}\mu([a_0:b])\right)
u^{[a_0:c]} f[c,a(1,i)].
\end{align*}
By the defining property of the M\"obius function,
\[
\sum_{a_0 \supseteq b \supseteq c}\mu([a_0:b])
=
\begin{cases}
1, & c=a_0,\\
0, & c\neq a_0.
\end{cases}
\]
Hence only the term $c=a_0$ survives, yielding $f[a(0,i)]$.
\end{proof}

\begin{lemma}\label{lemma4}
Let \(i\ge1\), let \(f\in C_i(\X)\), and let \([a(0,i+1)]\) be a pointed
\((i+1)\)-facet. Then
\[
\sum_{\substack{a_0 \supset b \supset a_2\\ b\not\supseteq a_1}}
\mu([a_0:b])u^{[a_0:b]}\,
(Q_i f)[b,a(2,i\!+\!1)]
=
-\!\!\sum_{\substack{a_0 \supset b \supset a_2\\ b+a_1=a_0}}
u^{[a_0:b]} f[b,a(2,i\!+\!1)].
\]
\end{lemma}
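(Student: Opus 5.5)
We expand the left-hand side using the definition of \(Q_i\), exactly as in the proof of Lemma~\ref{lemma3}, and then apply M\"obius inversion to the inner sum. For \(b\) with \(a_0\supset b\supset a_2\), we have
\[
(Q_i f)[b,a(2,i\!+\!1)]=\sum_{b\supseteq c\supset a_2}u^{[b:c]}f[c,a(2,i\!+\!1)].
\]
Since \(u^{[a_0:b]}u^{[b:c]}=u^{[a_0:c]}\), the left-hand side becomes
\[
\sum_{a_0\supset c\supset a_2}\Bigl(\sum_{\substack{a_0\supset b\supseteq c\\ b\not\supseteq a_1}}\mu([a_0:b])\Bigr)\,u^{[a_0:c]}f[c,a(2,i\!+\!1)].
\]
The constraint \(b\not\supseteq a_1\) already forces \(b\ne a_0\). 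So everything reduces to evaluating the inner coefficient
\[
S(c):=\sum_{\substack{a_0\supseteq b\supseteq c\\ b\not\supseteq a_1}}\mu([a_0:b])
\]
for each \(c\) with \(a_0\supset c\supset a_2\). We then need to show \(S(c)=-1\) when \(c+a_1=a_0\) and \(S(c)=0\) otherwise.

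To compute \(S(c)\), write it as the full sum over \(a_0\supseteq b\supseteq c\) minus the sum over \(b\supseteq c+a_1\). The full sum is the M\"obius sum over the interval \([c,a_0]\), so it equals \(1\) if \(c=a_0\) and \(0\) otherwise; since \(c\ne a_0\), it vanishes. The subtracted sum runs over \(a_0\supseteq b\supseteq c+a_1\), the whole interval \([c+a_1,a_0]\). By the defining property of \(\mu\) (the same identity used in Lemma~\ref{lemma3}), it equals \(1\) if \(c+a_1=a_0\) and \(0\) otherwise. Hence \(S(c)=-[c+a_1=a_0]\), which gives exactly the right-hand side.

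The only step needing care is to justify that we are working in the lattice of subspaces of the finite \(\kappa\)-vector space \(a_0/\pi a_0\), which contains \(a_2\supset\pi a_0\). There \(\mu\) depends only on codimension, and \(c+a_1\) lies between \(c\) and \(a_0\). We also need the bracketed sequences \([c,a(2,i\!+\!1)]\) to be genuine pointed \(i\)-facets. This follows because \(a_0\supset c\supset a_2\supseteq\pi a_0\), so the chain stays strict and closes up properly.

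I expect no real obstacle: the argument is a Lemma~\ref{lemma3}-style exchange of summation followed by a two-interval M\"obius evaluation. The one point to check is that the \(u\)-exponents combine as \(u^{[a_0:c]}\) independently of \(b\), which holds because the lengths add along the chain.
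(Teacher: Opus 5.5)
Your proposal is correct and follows the paper's proof essentially step for step. You expand $Q_i$ and exchange the order of summation to obtain the coefficient $\sum_{a_0\supset b\supseteq c,\ b\not\supseteq a_1}\mu([a_0:b])$. You then evaluate it as the full M\"obius sum over $[c,a_0]$, which vanishes since $c\neq a_0$, minus the M\"obius sum over $[c+a_1,a_0]$, which equals $1$ exactly when $c+a_1=a_0$. Your extra checks are correct and are details the paper leaves implicit: that $b\not\supseteq a_1$ forces $b\neq a_0$, and that $[c,a(2,i\!+\!1)]$ is a genuine pointed $i$-facet.
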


\begin{proof}
Expanding $Q_i$ gives
\begin{align*}
&\sum_{\substack{a_0 \supset b \supset a_2\\ b\not\supseteq a_1}}
\mu([a_0:b])u^{[a_0:b]}(Q_i f)[b,a(2,i\!+\!1)] \\
&\qquad=
\sum_{\substack{a_0 \supset b \supset a_2\\ b\not\supseteq a_1}}
\sum_{b \supseteq c \supset a_2}
\mu([a_0:b])u^{[a_0:c]} f[c,a(2,i\!+\!1)] \\
&\qquad=
\sum_{a_0 \supset c \supset a_2}
\left(
\sum_{\substack{a_0 \supset b \supseteq c\\ b\not\supseteq a_1}}
\mu([a_0:b])
\right)
u^{[a_0:c]} f[c,a(2,i\!+\!1)].
\end{align*}
The condition \(b\not\supseteq a_1\) is handled by subtracting the terms with
\(b\supseteq c+a_1\).  The inner sum decomposes as
\[
\sum_{a_0 \supseteq b \supseteq c}\mu([a_0:b])
-
\sum_{a_0 \supseteq b \supseteq (c+a_1)}\mu([a_0:b]).
\]
By M\"obius inversion,
\[
\sum_{a_0 \supseteq b \supseteq c}\mu([a_0:b])=0,
\qquad
\sum_{a_0 \supseteq b \supseteq (c+a_1)}\mu([a_0:b])
=
\begin{cases}
1,& c+a_1=a_0,\\
0,& c+a_1\neq a_0.
\end{cases}
\]
Here $c\neq a_0$, so the first inclusive M\"obius sum is zero.  Therefore the
coefficient equals $-1$ precisely when $c+a_1=a_0$, and $0$ otherwise, yielding
the stated identity.
\end{proof}

\begin{lemma}\label{lemma1}
For $i\ge1$, \(f\in C_{i-1}(\X)\), and any pointed \(i\)-facet \([a(0,i)]\),
we have
\[
(W_i d_{i-1})f[a(0,i)]
=
\sum_{k=1}^{i}(-1)^k\, W_{i-1}f[a(0,\underline{k},i)].
\]
\end{lemma}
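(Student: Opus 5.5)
The plan is a direct term-by-term computation on a fixed pointed \(i\)-facet \([a(0,i)]\), using the periodic convention \(a_{j+i+1}=\pi a_j\) throughout. For \(i=1\), the operator \(W_0\) is read by the same defining formula with \(a_1=\pi a_0\), so \((W_0f)[a_0]=(1-u^n)f[a_0]\); I would check that the computation below covers this case verbatim. Everything is pointwise and \(G\)-equivariant, so it suffices to argue on \(\scrB\) and then pass to \(\Gamma\)-invariants.

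\emph{Step 1: expand the left-hand side.} Put \(g=d_{i-1}f\). By definition,
\[
(W_ig)[a(0,i)]=g[a(0,i)]+(-1)^{i+1}u^{[a_0:a_1]}g[a(1,i+1)].
\]
Expanding with the formula for \(d_{i-1}\) gives four groups of terms:
\begin{itemize}
\item[(A)] \(u^{[a_0:a_1]}f[a(1,i)]\);
\item[(B)] \(\sum_{k=1}^{i}(-1)^kf[a(0,\underline{k},i)]\);
\item[(C)] \((-1)^{i+1}u^{[a_0:a_1]+[a_1:a_2]}f[a(2,i+1)]\);
\item[(D)] \((-1)^{i+1}u^{[a_0:a_1]}\sum_{k=2}^{i+1}(-1)^{k-1}f[a(1,\underline{k},i+1)]\).
\end{itemize}

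\emph{Step 2: expand the right-hand side.} Write \(b=a(0,\underline{k},i)\). Then
\[
(W_{i-1}f)[b]=f[b]+(-1)^iu^{[b_0:b_1]}f[b(1,i)],
\]
where \(b_i=\pi b_0=\pi a_0\). I split into two cases.
\begin{itemize}
\item For \(k\ge2\) we have \(b_1=a_1\) and \(b(1,i)=a(1,\underline{k},i+1)\).
\item For \(k=1\) we have \(b_1=a_2\) and \(b(1,i)=a(2,i+1)\).
\end{itemize}

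\emph{Step 3: match the terms.}
\begin{itemize}
\item The unshifted parts \(\sum_k(-1)^kf[b]\) of the right-hand side are exactly (B).
\item The \(k=1\) shifted term is \(-(-1)^iu^{[a_0:a_2]}f[a(2,i+1)]\). This equals (C) because \([a_0:a_1]+[a_1:a_2]=[a_0:a_2]\).
\item The \(k\ge2\) shifted terms are \((-1)^{i+k}u^{[a_0:a_1]}f[a(1,\underline{k},i+1)]\) for \(2\le k\le i\). These match the summands of (D) with \(2\le k\le i\), since \((-1)^{i+1}(-1)^{k-1}=(-1)^{i+k}\).
\item The leftover summand \(k=i+1\) of (D) is \((-1)^{i+1}(-1)^{i}u^{[a_0:a_1]}f[a(1,i)]\), because omitting \(a_{i+1}\) from \(a(1,i+1)\) leaves \(a(1,i)\). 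It therefore equals \(-\)(A) and cancels (A).
\end{itemize}
This accounts for every term on both sides and proves the identity.

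\emph{Main obstacle.} The only delicate point is bookkeeping rather than any new idea. One must confirm that omission of a vertex commutes with the periodic extension and the shift, and that each resulting sequence (for instance \(a(1,\underline{k},i+1)\) and \(a(2,i+1)\)) is a genuine chain representing the intended pointed \((i-1)\)-facet with the correct base vertex. I expect the identification \(b(1,i)=a(1,\underline{k},i+1)\) to be where sign and indexing slips are most likely, so I would verify it first on the edge case \(k=i\) and in the case \(i=1\).
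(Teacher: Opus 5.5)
Your proposal is correct and follows the paper's proof: you expand $W_i d_{i-1}$, regroup by omitted index, match the $k=1$ shifted term of $W_{i-1}$ with the $f[a(2,i+1)]$ term, and cancel the first-face term $u^{[a_0:a_1]}f[a(1,i)]$ against the summand of the shifted expansion that omits $a_{i+1}$. The paper's one-line ``grouping'' step leaves two points implicit, and you make both explicit: that cancellation, and the convention $W_0=(1-u^n)\,\mathrm{Id}$ needed for $i=1$.
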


\begin{proof}
By definition of $W_i$,
\begin{align*}
(W_i d_{i-1})f[a(0,i)]
&= d_{i-1}f[a(0,i)]
   +(-1)^{i+1}u^{[a_0:a_1]} d_{i-1}f[a(1,i+1)].
\end{align*}
Expanding both $d_{i-1}$ terms and regrouping yields
\begin{align*}
&u^{[a_0:a_1]}f[a(1,i)]
+\sum_{k=1}^{i}(-1)^k f[a(0,\underline{k},i)]
\\
&\quad
+(-1)^{i+1}u^{[a_0:a_1]}
\Bigl(
u^{[a_1:a_2]}f[a(2,i+1)]
+\sum_{k=1}^{i}(-1)^k f[a(1,\underline{k+1},i+1)]
\Bigr),
\end{align*}
Grouping the terms with the same omitted index gives exactly
\[
\sum_{k=1}^{i}(-1)^k\, W_{i-1}f[a(0,\underline{k},i)].
\]
\end{proof}

\begin{lemma}\label{lemma2}
For $i\ge1$, \(f\in C_{i+1}(\X)\), and any pointed \(i\)-facet \([a(0,i)]\),
we have
\[
(W_i \delta_{i+1})f[a(0,i)]
=
(1-u^n)\, R_{i+1}f[a(0,i)].
\]
\end{lemma}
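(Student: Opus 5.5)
The identity is a direct computation: expand $W_i$ against the definition of $\delta_{i+1}$ and watch the cyclic sum telescope. Write $g=R_{i+1}f\in C_i(\scrB)$ and let $\omega=(-1)^{i+2}=(-1)^i$ be the sign appearing in $\delta_{i+1}$. Then
\[
(\delta_{i+1}f)[a(0,i)]=\sum_{j=0}^{i}\omega^{j}u^{[a_0:a_j]}\,g[a(j,j+i)],
\]
where $a_j=\pi a_{j-i-1}$ for $j\ge i+1$. This is a weighted cyclic average of $g$ over the $i+1$ rotations of the pointed $i$-facet $[a(0,i)]$.

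Applying $W_i$ adds $(-1)^{i+1}u^{[a_0:a_1]}(\delta_{i+1}f)[a(1,i+1)]$. Expanding the second term gives
\[
(-1)^{i+1}u^{[a_0:a_1]}\sum_{j=0}^{i}\omega^{j}u^{[a_1:a_{j+1}]}\,g[a(j+1,j+i+1)].
\]
Since $u^{[a_0:a_1]}u^{[a_1:a_{j+1}]}=u^{[a_0:a_{j+1}]}$, the shifted index $j'=j+1$ runs from $1$ to $i+1$. For $1\le j'\le i$, the term carries sign $(-1)^{i+1}\omega^{j'-1}=-\omega^{j'}$, because $\omega=(-1)^i$ and $(-1)^{i+1}\omega^{-1}=-1$. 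So these terms cancel exactly against the $j'$ terms of $\delta_{i+1}f[a(0,i)]$.

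Two boundary terms remain. From the first sum there is the $j=0$ term, $g[a(0,i)]$. From the second there is the $j'=i+1$ term, $-\omega^{i+1}u^{[a_0:a_{i+1}]}g[a(i+1,2i+1)]$.

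To evaluate the surviving term, I need two facts.

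\begin{enumerate}
\item The sequence $a(i+1,2i+1)=(\pi a_0,\ldots,\pi a_i)$ represents the same pointed facet as $[a(0,i)]$, because $f$ and $g$ are functions on homothety classes. The paper already uses this convention in the $\Phi_0$ computation ($f[\pi a_0]=f[a_0]$).
\item Here $[a_0:a_{i+1}]=[a_0:\pi a_0]=n$.
\end{enumerate}

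Also $\omega^{i+1}=(-1)^{i(i+1)}=1$, since $i(i+1)$ is even. Hence the total is $(1-u^n)\,g[a(0,i)]=(1-u^n)R_{i+1}f[a(0,i)]$, which is the claim.

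The main obstacle is bookkeeping with the periodic indexing convention. The convention $a_j=\pi a_{j-i-1}$ must be applied consistently inside $\delta_{i+1}$ evaluated at the shifted facet $[a(1,i+1)]$, whose own periodic extension is by $i+1$ steps with last term $a_{i+1}=\pi a_0$. I would check that $a(1,i+1)(j,j+i)$, computed in the shifted facet's indexing, coincides with $a(j+1,j+i+1)$ in the original indexing. This holds because both extensions use period $i+1$ with multiplication by $\pi$. I would also check that the exponent $[a_1:a_{j+1}]$ is computed in the same chain. Once this identification is verified, the telescoping cancellation above is routine. I would also note that the argument is $G$-equivariant, so it descends to $C_i(\X)$.
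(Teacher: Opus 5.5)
Your proposal is correct and follows the same route as the paper: expand $W_i\delta_{i+1}$, shift the index in the second sum so that the terms with $1\le j\le i$ cancel in pairs (signs $(-1)^{ij}$ versus $(-1)^{ij+1}$), and evaluate the two surviving boundary terms using $a_{i+1}=\pi a_0$ and $[a_0:\pi a_0]=n$. You also check explicitly that the periodic extension of $[a(1,i+1)]$ matches that of $[a(0,i)]$ and that $(-1)^{i(i+1)}=1$, which are details the paper's brief proof leaves implicit.
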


\begin{proof}
Using the definitions of $W_i$ and $\delta_{i+1}$,
\begin{align*}
(W_i \delta_{i+1})f[a(0,i)]
&=
\delta_{i+1}f[a(0,i)]
+(-1)^{i+1}u^{[a_0:a_1]}\delta_{i+1}f[a(1,i+1)]\\
&=
\sum_{j=0}^{i}(-1)^{ij}u^{[a_0:a_j]}R_{i+1}f[a(j,j+i)]
\\
&\quad
+\sum_{j=1}^{i+1}(-1)^{ij+1}u^{[a_0:a_1]+[a_1:a_j]}R_{i+1}f[a(j,j+i)].
\end{align*}
The two displayed sums have the same terms with opposite signs after shifting
the index, except for the boundary terms whose total exponent is
\(n=[a_0:\pi a_0]\).  These remaining terms give
\((1-u^n)R_{i+1}f[a(0,i)]\).
\end{proof}

\begin{lemma}\label{main2}
For $i\ge1$, \(f\in C_i(\X)\), and any pointed \(i\)-facet \([a(0,i)]\), we have
\[
\frac{1}{1-u^n}\, W_i\Phi_i Q_i f[a(0,i)]
=
f[a(0,i)]
+(-1)^i
\!\!\sum_{\substack{a_0 \supset b \supset a_2\\ b+a_1=a_0}}
u^{[a_0:b]} f[b,a(2,i+1)].
\]
\end{lemma}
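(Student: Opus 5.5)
We expand \(\Phi_i=d_{i-1}\delta_i+\delta_{i+1}d_i+(1-u^n)\mathrm{Id}\) and compute \(W_i\Phi_iQ_i\) term by term. Lemma~\ref{lemma1} and Lemma~\ref{lemma2} let us move \(W_i\) past the coboundary and the operator \(\delta_{i+1}\).

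For the \(\delta_{i+1}d_i\) term, Lemma~\ref{lemma2} gives
\(W_i\delta_{i+1}d_iQ_i=(1-u^n)R_{i+1}d_iQ_i\).
For the \(d_{i-1}\delta_i\) term, Lemma~\ref{lemma1} gives
\(W_id_{i-1}\delta_iQ_i f[a(0,i)]=\sum_{k=1}^i(-1)^k(W_{i-1}\delta_iQ_if)[a(0,\underline{k},i)]\).
Applying Lemma~\ref{lemma2} one degree lower, this equals \((1-u^n)\sum_k(-1)^k(R_iQ_if)[a(0,\underline k,i)]\), which is \((1-u^n)(d_{i-1}R_iQ_if)\) minus its leading term. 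Lemma~\ref{lemma2} requires \(i-1\ge1\); for \(i=1\) the term involves \(W_0\), and we check directly that \(W_0=1-u^n\) in the sense needed. After dividing by \(1-u^n\), the left side becomes
\[
W_iQ_if+R_{i+1}d_iQ_if+\sum_{k=1}^i(-1)^k(R_iQ_if)[a(0,\underline{k},i)].
\]

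We then expand \(R_{i+1}d_iQ_if[a(0,i)]\). It is \((-1)^{i+1}\) times a sum over \(b\) with \(a_0\supset b\supset a_1\), weighted by \(u^{[a_0:b]}\mu([a_0:b])\), applied to \(d_iQ_if[b,a(1,i+1)]\). Writing out \(d_i\) on the pointed \((i+1)\)-facet \([b,a_1,\dots,a_{i+1}]\) produces three kinds of terms.
\begin{itemize}
\item[(a)] The head term is \(u^{[b:a_1]}Q_if[a(1,i+1)]\). Summed over \(b\) with \(\sum_b\mu=0\) (since \(a_0\ne a_1\)), it vanishes; or it cancels against the shift part of \(W_iQ_if\).
\item[(b)] The term omitting \(a_1\), namely \(-Q_if[b,a(2,i+1)]\), is summed over \(b\supset a_1\). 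This is exactly the \(b\supseteq a_1\) portion complementary to Lemma~\ref{lemma4}.
\item[(c)] The terms omitting \(a_k\) for \(k\ge2\) should cancel against \(\sum_k(-1)^k(R_iQ_if)[a(0,\underline{k},i)]\), since \(R_i\) sums over the same \(b\) between \(a_0\) and \(a_1\) with matching sign \((-1)^i\) versus \((-1)^{i+1}\).
\end{itemize}
For the remaining \(k=1\) term of \(d_{i-1}R_iQ_if\), we write \(R_iQ_if[a(0,\underline1,i)]\) as a sum over \(a_0\supset b\supset a_2\). We split this sum according to whether \(b\supseteq a_1\) or \(b\not\supseteq a_1\), as follows.
\begin{itemize}
\item The part with \(b\supseteq a_1\), combined with (b) and the diagonal term \(Q_if\), is handled by Lemma~\ref{lemma3}. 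The combination \(\sum_{a_0\supseteq b\supset a_1}\mu u^{[a_0:b]}Q_if[b,a(1,i)]\) collapses to \(f[a(0,i)]\).
\item The part with \(b\not\supseteq a_1\) is handled by Lemma~\ref{lemma4}. It gives \(-\sum_{b+a_1=a_0}u^{[a_0:b]}f[b,a(2,i+1)]\), and the sign \((-1)^{i+1}\) from \(R_i\) yields the claimed \((-1)^i\) coefficient.
\end{itemize}

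The main obstacle is the sign and index bookkeeping, in two places. The first is reconciling the periodic conventions \(a_{j+i+1}=\pi a_j\) in \(W_i\), which shifts to \(a(1,i+1)\), with the omission terms. The second is checking that the shift term of \(W_iQ_i\), namely \((-1)^{i+1}u^{[a_0:a_1]}Q_if[a(1,i+1)]\), cancels precisely with the surviving head terms of type (a). I would first verify the whole identity for \(i=1\), \(n=2\) or \(n=3\) to fix signs, and then carry out the general regrouping with the omitted index tracked as in the proof of Lemma~\ref{lemma1}.
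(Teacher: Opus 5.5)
Your overall route is the paper's. Lemmas~\ref{lemma1} and~\ref{lemma2} reduce the left side to $W_ig+\sum_{k=1}^{i}(-1)^k(R_ig)[a(0,\underline{k},i)]+R_{i+1}d_ig$ with $g=Q_if$; you rightly flag the degenerate case $W_0=1-u^n$, $\delta_1=R_1$ at $i=1$. The proof then ends with Lemmas~\ref{lemma3} and~\ref{lemma4}. However, the cancellation scheme you propose is wrong in three places. Since the lemma is nothing but this bookkeeping, the argument does not close as written.

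(1) In $d_i[b,a_1,\ldots,a_{i+1}]$ the omitted index runs over $h=1,\ldots,i+1$, but the middle sum only has $k\le i$. So in (c) only the terms with $2\le h\le i$ cancel. The term $h=i+1$ survives as $\sum_{a_0\supset b\supset a_1}\mu([a_0:b])u^{[a_0:b]}g[b,a(1,i)]$. This term, together with the identity summand of $W_ig$ (the endpoint $b=a_0$), is what forms the Lemma~\ref{lemma3} sum. Your first bullet instead builds that sum from (b) and the $b\supseteq a_1$ part of the $k=1$ term. Those terms involve $g[b,a(2,i+1)]$, not $g[b,a(1,i)]$, so they cannot produce it.

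(2) All intervals are strict ($\supset$ means $\supsetneq$). Hence in (a) $\sum_{a_0\supset b\supset a_1}\mu([a_0:b])=-1-\mu([a_0:a_1])$, not $0$, and the head term equals $(-1)^i\bigl(1+\mu([a_0:a_1])\bigr)u^{[a_0:a_1]}g[a(1,i+1)]$. Its first part cancels the shift summand of $W_ig$. Its $\mu([a_0:a_1])$ part cancels the $b=a_1$ term of $-(R_ig)[a(0,\underline{1},i)]$.

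(3) The remaining $b\supsetneq a_1$ part of that $k=1$ term cancels exactly against (b), which contains only $b\supsetneq a_1$. It does not feed into Lemma~\ref{lemma3}.

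After these corrections the only surviving terms are
\[
\sum_{a_0\supseteq b\supset a_1}\mu([a_0:b])u^{[a_0:b]}g[b,a(1,i)]
-(-1)^i\sum_{\substack{a_0\supset b\supset a_2\\ b\not\supseteq a_1}}\mu([a_0:b])u^{[a_0:b]}g[b,a(2,i+1)].
\]
This is the paper's intermediate identity, and Lemmas~\ref{lemma3} and~\ref{lemma4} then give the claim. In your last bullet, the $(-1)^{i+1}$ is the $(-1)^i$ of $R_i$ times the $(-1)^1$ of the $k=1$ term, not a sign carried by $R_i$ itself. The net coefficient $(-1)^i$ you arrive at is correct.
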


\begin{proof}
By Lemmas~\ref{lemma1} and~\ref{lemma2},
\[
\frac{1}{1-u^n}(W_i d_{i-1}\delta_i)f[a(0,i)]
=
\sum_{k=1}^{i}(-1)^k R_i f[a(0,\underline{k},i)],
\]
and
\[
\frac{1}{1-u^n}(W_i\delta_{i+1}d_i)f[a(0,i)]
=
R_{i+1}d_if[a(0,i)].
\]
Thus
\begin{align*}
\frac{1}{1-u^n}W_i\Phi_i f[a(0,i)]
&=W_i f[a(0,i)]\\
&\quad+\sum_{k=1}^{i}(-1)^kR_i f[a(0,\underline{k},i)]\\
&\quad+R_{i+1}d_i f[a(0,i)].
\end{align*}
To display the cancellation in this expression, expand its last term as
\begin{align*}
R_{i+1}d_i f[a(0,i)]
&=
(-1)^{i+1}
\sum_{a_0\supset b\supset a_1}
\mu([a_0:b])u^{[a_0:b]}
\\
&\qquad\mathrel{}\times
\left(
u^{[b:a_1]}f[a(1,i+1)]
+\sum_{h=1}^{i+1}(-1)^h
f[b,a(1,\underline{h},i+1)]
\right).
\end{align*}
The term with \(h=i+1\), together with the first summand of \(W_i f\),
gives the first sum below, including its endpoint \(b=a_0\).  For
\(2\le h\le i\), the term omitting \(a_h\) cancels the corresponding term
in the middle sum above.
For \(h=1\), the same cancellation removes the part of the \(k=1\) term
with \(b\supset a_1\); the weighted first-face term and the second summand
of \(W_i f\) remove its remaining term \(b=a_1\).  Thus the uncancelled
part is characterized by \(b\not\supseteq a_1\).
Consequently,
\begin{align*}
\frac{1}{1-u^n}W_i\Phi_if[a(0,i)]
&=
\sum_{a_0 \supseteq b \supset a_1}
\mu([a_0:b])u^{[a_0:b]}f[b,a(1,i)]\\
&\quad
-(-1)^i\!\!\sum_{\substack{a_0 \supset b \supset a_2\\ b\not\supseteq a_1}}
\mu([a_0:b])u^{[a_0:b]}f[b,a(2,i+1)].
\end{align*}
Now replace $f$ in this identity by $Q_i f$.  Lemma~\ref{lemma3} evaluates the
first sum as $f[a(0,i)]$, while Lemma~\ref{lemma4} evaluates the second sum.
Thus
\[
\frac{1}{1-u^n}\, W_i\Phi_i Q_i f[a(0,i)]
=
f[a(0,i)]
+(-1)^i
\!\!\sum_{\substack{a_0 \supset b \supset a_2\\ b+a_1=a_0}}
u^{[a_0:b]} f[b,a(2,i+1)],
\]
as claimed.
\end{proof}

By Section~\ref{successor}, $b(0,i)$ is a successor of $a(0,i)$ if
\[
a_1=b_0,\ a_2=b_1,\ \ldots,\ a_i=b_{i-1},
\qquad
a_{i+1}\cap b_i=a_{i+2},
\qquad
a_{i+1}+b_i=a_i.
\]
Thus $T_i$ can be written as
\begin{align}
(T_i f)[a(0,i)]
&=
\sum_{\substack{a_i \supset b \supset a_{i+2}\\
b+a_{i+1}=a_i\\
b\cap a_{i+1}=a_{i+2}}}
f[a(1,i),b]\,u^{[a_0:a_1]}
\nonumber\\
&=
\sum_{\substack{a_i \supset b \supset a_{i+2}\\
b+a_{i+1}=a_i\\
b\cap a_{i+1}=a_{i+2}}}
f[a(1,i),b]\,u^{[a_i:b]}.
\label{eq71}
\end{align}

Set $ (J f)[a(0,i)] = f[a(1,i+1)]$ and let $\Lambda_i = \frac{1}{1-u^n}W_i \Phi_i Q_i -I$ for short. Then

\begin{align} 
(J^{-1} \Lambda_i J f)[a(0,i)]
&= (\Lambda_i J f)[a(-1,i-1)] \nonumber \\
&= (-1)^i
\sum_{\substack{a_{-1} \supset b \supset a_{1}\\ b+a_{0}=a_{-1}}}
(Jf)[b,a(1,i)]\,u^{[a_{-1}:b]} \nonumber \\
&= (-1)^i
\sum_{\substack{a_{-1} \supset b \supset a_{1}\\ b+a_{0}=a_{-1}}}
f[a(1,i),\pi b]\,u^{[a_{-1}:b]} \nonumber \\
&= (-1)^i
\sum_{\substack{a_{i} \supset b' \supset a_{i+2}\\ b'+a_{i+1}=a_{i}}}
f[a(1,i),b']\,u^{[a_i:b']}.
\label{eq72}
\end{align}
Here we set $b'=\pi b$ and use the relation $a_{j+i+1} = \pi a_j$ in the last equality.
\begin{theorem}
For \(i\ge1\), let \(V_i\) be the number of \(i\)-facets of \(\X\). Then
\[
\det(\Phi_i\mid C_i(\X))
=
(1-u^n)^{iV_i}
\det\!\left(I-(-1)^{i+1}T_i\mid C_i(\X)\right).
\]
\end{theorem}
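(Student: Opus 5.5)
The plan is to read the determinant of \(\Phi_i\) off the factorization of Lemma~\ref{main2}. Set \(\Lambda_i=\frac{1}{1-u^n}W_i\Phi_iQ_i-I\) as above. Taking determinants on the finite-dimensional space \(C_i(\X)\), of dimension \((i+1)V_i\), gives
\[
(1-u^n)^{-(i+1)V_i}\det(W_i)\det(\Phi_i)\det(Q_i)=\det(I+\Lambda_i).
\]
Theorem~\ref{QW} supplies \(\det W_i=(1-u^n)^{V_i}\) and \(\det Q_i=1\). Substituting these, it follows that
\[
\det(\Phi_i\mid C_i(\X))=(1-u^n)^{iV_i}\det(I+\Lambda_i).
\]
It therefore remains to show that \(\det(I+\Lambda_i)=\det(I+(-1)^iT_i)\), since \(I+(-1)^iT_i=I-(-1)^{i+1}T_i\).

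First I will pass to the conjugate \(J^{-1}\Lambda_iJ\). The operator \(J\) is induced by the rotation \([a(0,i)]\mapsto[a(1,i+1)]\) of pointed facets. This rotation is a bijection on pointed \(i\)-facets of \(\scrB\), it commutes with \(\Gamma\), and so it descends to an invertible operator on \(C_i(\X)\). Hence \(\det(I+\Lambda_i)=\det(I+J^{-1}\Lambda_iJ)\). By \eqref{eq72}, \(J^{-1}\Lambda_iJ\) is \((-1)^i\) times the sum over \(b'\) with \(a_i\supset b'\supset a_{i+2}\) and \(b'+a_{i+1}=a_i\), weighted by \(u^{[a_i:b']}\). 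Comparing with \eqref{eq71}, this sum differs from \(T_i\) only in that the condition \(b'\cap a_{i+1}=a_{i+2}\) is dropped. So I will write
\[
J^{-1}\Lambda_iJ=(-1)^i(T_i+N_i),
\]
where \(N_i\) collects the terms with \(b'\cap a_{i+1}\supsetneq a_{i+2}\). On the quotient these are again local sums over lifts, by Lemma~\ref{lem:local-successor}.

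The key step, which I expect to be the main obstacle, is to show that \(N_i\) does not change the determinant. I would filter \(C_i(\X)\) by the last part \(\lambda_i\) of the ordered type of a pointed facet. The target facet \([a(1,i),b']\) has last part \(\lambda_i'=[b':\pi a_1]=[b':a_{i+2}]\). Since \(b'+a_{i+1}=a_i\), we have
\[
[a_i:b']=[a_{i+1}:b'\cap a_{i+1}].
\]
For the terms of \(T_i\) this equals \([a_{i+1}:a_{i+2}]=\lambda_0\), so \(\lambda_i'=\lambda_i\) and the last part is preserved; this matches the type rule of Section~\ref{bt-successor-relation}. For the terms of \(N_i\) it is strictly smaller than \(\lambda_0\), so \([b':a_{i+2}]>\lambda_i\) and the last part strictly increases. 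I still need to verify this dimension bookkeeping carefully, including the periodic convention \(a_{j+i+1}=\pi a_j\).

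Granting that, order basis functions by decreasing last part. Then \(T_i\) is block diagonal and \(N_i\) is strictly block triangular, so \(I+(-1)^i(T_i+N_i)\) is block triangular with the same diagonal blocks as \(I+(-1)^iT_i\). This gives \(\det(I+\Lambda_i)=\det(I+(-1)^iT_i)\), and combined with the first step it proves the theorem.
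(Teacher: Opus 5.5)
Your proposal is correct and follows the paper's proof. The factorization \(I+\Lambda_i=\frac{1}{1-u^n}W_i\Phi_iQ_i\) together with conjugation by \(J\) reduces the claim to comparing \(\Sigma_i\) with \(T_i\). The key input is the same monotonicity of the last part, \(\lambda_i'=\lambda_i+\lambda_0-[a_i:b']\ge\lambda_i\), with equality exactly when \(b'\cap a_{i+1}=a_{i+2}\); your dimension bookkeeping checks out.

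The only difference is in the last step, and it is cosmetic. The paper uses the monotonicity to prove \(\tr(\Sigma_i^r)=\tr(T_i^r)\), noting that a closed path must keep its last part constant. You instead observe that \(\Sigma_i-T_i\) is strictly block triangular for the last-part grading. Your version is slightly more direct and avoids the exponential–trace identity.
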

\begin{proof}
First we reduce \(\Phi_i\) to the auxiliary operator \(\Sigma_i\).
Let 
\[
\begin{aligned}
\Sigma_i
&= (-1)^i J^{-1} \Lambda_i J \\
&= (-1)^i
\left(\frac{1}{1-u^n}J^{-1}W_i \Phi_i Q_i J -I\right).
\end{aligned}
\]
Then 
\[
\Phi_i
=
(1-u^n)W_i^{-1}J
\bigl(I+(-1)^i\Sigma_i\bigr)
J^{-1}Q_i^{-1}.
\]
By Theorem~\ref{QW}, $\det(Q_i)=1$ and
$\det(W_i)=(1-u^n)^{V_i}$.  Then
\begin{align*}
\det(\Phi_i) &= 
\det\bigg((1-u^n)  W_i^{-1}J(I + (-1)^i \Sigma_i)J^{-1}Q_i^{-1}\bigg)\\
&= (1-u^n)^{(i+1) V_{i}}
\det\bigg( W_i^{-1}J(I + (-1)^i\Sigma_i)J^{-1}Q_i^{-1}\bigg) \\
&= (1-u^n)^{i V_{i}} \det(I+(-1)^i \Sigma_i).
\end{align*}
Here $V_i$ is the number of $i$-facets of $\X$ and $(i+1)V_i$ is the size of the matrix $\Phi_i$.

It remains to compare \(\Sigma_i\) with \(T_i\) by traces.  We first show that
\[
\det\!\left(I-(-1)^{i+1} \Sigma_i\right)
=
\det\!\left(I-(-1)^{i+1}T_i\mid C_i(\X)\right).
\]

Both \(\Sigma_i\) and \(T_i\) have entries in \(u\C[[u]]\), the maximal ideal
of \(\C[[u]]\).  For any \(m\times m\) matrix \(M\) with entries in this ideal,
the formal identity
\[
\det(I_m - M)
=
\exp\left(-\sum_{r=1}^\infty \frac{\tr(M^r)}{r}\right)
\]
holds.  Therefore it suffices to show that \(\Sigma_i^r\) and \(T_i^r\) have
the same trace for every \(r\ge1\).  By
Equations~\eqref{eq71} and~\eqref{eq72}, we have
\[
\begin{aligned}
(T_i f)[a(0,i)]
&=
\sum_{\substack{a_i \supset b \supset a_{i+2}\\
b+a_{i+1}=a_i\\
b\cap a_{i+1}=a_{i+2}}}
f[a(1,i),b]\,u^{[a_i:b]}
\end{aligned}
\]
and
\[
\begin{aligned}
(\Sigma_i f)[a(0,i)]
&=
\sum_{\substack{a_{i} \supset b \supset a_{i+2}\\ b+a_{i+1}=a_{i}}}
f[a(1,i),b]\,u^{[a_i:b]}.
\end{aligned}
\]

Now given two pointed $i$-facets $a(0,i)$ and $a'(0,i)$, 
\begin{itemize}
\item denote by  $a(0,i)\rightarrow a'(0,i)$ the condition that $a(1,i)=a'(0,i-1)$ and $a'_i +a_{i+1}=a_i$;
\item denote by  $a(0,i)\Rightarrow a'(0,i)$ the same condition together with the extra condition  $a'_i \cap a_{i+1}=a_{i+2}$. 
\end{itemize}
Under these notations, we have 
\[
(T_i f)[a(0,i)]
=
\sum_{a(0,i)\Rightarrow a'(0,i)}
f[a'(0,i)]\,u^{[a_i:a'_i]}
\]
and
\[
(\Sigma_i f)[a(0,i)]
=
\sum_{a(0,i)\rightarrow a'(0,i)}
f[a'(0,i)]\,u^{[a_i:a'_i]}.
\]

We now compare closed paths contributing to the two traces.
Lifting closed paths in the finite quotient to $\scrB$, the trace of $(T_i)^r$
is the weighted sum over the following sequences of pointed $i$-facets modulo
the action of $\Gamma$:
\[
\begin{gathered}
\left(a^{(0)},a^{(1)},\ldots,a^{(r)}\right)
\text{ with } a^{(j)} \Rightarrow a^{(j+1)} \text{ for all }j,\\
\text{and }a^{(r)}=\gamma a^{(0)}
\text{ for some }\gamma\in\Gamma.
\end{gathered}
\]
Similarly, the trace of $(\Sigma_i)^r$ is the weighted sum over the following
sequences of pointed $i$-facets modulo the action of $\Gamma$:
\[
\begin{gathered}
\left(a^{(0)},a^{(1)},\ldots,a^{(r)}\right)
\text{ with } a^{(j)} \rightarrow a^{(j+1)} \text{ for all }j,\\
\text{and }a^{(r)}=\gamma a^{(0)}
\text{ for some }\gamma\in\Gamma.
\end{gathered}
\]
To complete the proof, let us show the above two conditions are equivalent.
Suppose \(a^{(j)}\rightarrow a^{(j+1)}\) for all \(j\).  We have to show that
\(a^{(j+1)}_i\cap a^{(j)}_{i+1}
=a^{(j)}_{i+2}=a^{(j+1)}_{i+1}\) for all \(j\).
Since \(a^{(j+1)}_i+a^{(j)}_{i+1}=a^{(j)}_i\), we have
\[
\dim_\kappa a^{(j+1)}_i/a^{(j+1)}_{i+1}
=
\dim_\kappa a^{(j+1)}_i/a^{(j)}_{i+2}
\geq
\dim_\kappa a^{(j)}_i/a^{(j)}_{i+1}
\qquad(0\leq j\leq r-1).
\]
The intersection condition is equivalent to equality in this inequality.

On the other hand, \(a^{(r)}=\gamma a^{(0)}\) for some
\(\gamma\in\Gamma\), and hence
\[
\dim_\kappa a^{(r)}_i/a^{(r)}_{i+1}
=
\dim_\kappa \gamma(a^{(0)}_i)/\gamma(a^{(0)}_{i+1})
=
\dim_\kappa a^{(0)}_i/a^{(0)}_{i+1}.
\]
It follows that
\[
\dim_\kappa a^{(0)}_i/a^{(0)}_{i+1}
=
\dim_\kappa a^{(1)}_i/a^{(1)}_{i+1}
=\cdots=
\dim_\kappa a^{(r)}_i/a^{(r)}_{i+1}.
\]
Thus every closed sequence satisfying \(\rightarrow\) also satisfies
\(\Rightarrow\).
The converse is immediate because \(\Rightarrow\) is defined by adding the
intersection condition to \(\rightarrow\), and the weights in the two trace sums
agree.
Therefore $\tr(\Sigma_i^r)=\tr(T_i^r)$ for all $r$, completing the proof.

\end{proof}

\section{Central division algebras}

We now show that the preceding theory extends from \(F\) to a central
division algebra over \(F\).  The purpose of this section is both to state the
result precisely and to record the few changes needed in the proof.

\subsection{The building and its lattice model}

Let \(D\) be a central division algebra over \(F\), of degree \(d\), so that
\(\dim_F D=d^2\).  Let
\[
v_D:D^\times\longrightarrow\bbZ
\]
be the normalized valuation, let \(\ringO_D\) be its valuation ring, and
choose a uniformizer \(\Pi\).  Thus the unique maximal two-sided ideal is
\[
\mathfrak p_D=\Pi\ringO_D=\ringO_D\Pi.
\]
The residue division ring \(\kappa_D=\ringO_D/\mathfrak p_D\) is a finite
field.  If \(q=\#\kappa\), then the standard structure theory of division
algebras over local fields gives
\begin{equation}\label{eq:division-residue}
Q:=\#\kappa_D=q^d.
\end{equation}
See, for example, \cite{Sat}*{Section~8.1}.  We use \(Q\), rather than \(q\),
as the residue parameter throughout this section.

Let \(V\) be an \(n\)-dimensional right \(D\)-vector space.  A
\(D\)-lattice in \(V\) is a finitely generated right \(\ringO_D\)-submodule
that spans \(V\) over \(D\).  Two \(D\)-lattices are homothetic if they
differ by right multiplication by an element of \(D^\times\).  The vertices
of the reduced Bruhat--Tits building \(\scrB_D\) of
\[
G_D=\PGL_n(D):=\GL_D(V)/F^\times
\]
are the homothety classes of \(D\)-lattices.  A pointed \(k\)-facet is
represented by a chain
\begin{equation}\label{eq:division-chain}
L_0\supsetneq L_1\supsetneq\cdots\supsetneq L_k\supsetneq L_0\Pi.
\end{equation}
Its ordered type is
\[
(\lambda_0,\ldots,\lambda_k),\qquad
\lambda_i=\dim_{\kappa_D}(L_i/L_{i+1}),
\]
where \(L_{k+1}=L_0\Pi\).  Since \(L_0/L_0\Pi\) has dimension \(n\) over
\(\kappa_D\), one has \(\lambda_0+\cdots+\lambda_k=n\).  In particular, the
types, circular partitions, and ordered partitions used in Sections~2--3 are
unchanged.  The type of a vertex is again valued in \(\bbZ/n\bbZ\); in the
lattice model it is given by relative \(\ringO_D\)-length modulo \(n\), or
equivalently by the valuation of the Dieudonn\'e determinant modulo \(n\).

The building \(\scrB_D\) is an affine building of type
\(\widetilde A_{n-1}\).  Its apartments have the same affine Coxeter
complexes as those of \(\scrB\), and its spherical residues are the
corresponding flag buildings over \(\kappa_D\).  Thus the definitions of
opposite neighboring facets and successors in Section~2 apply without
change.  In the lattice model, two opposite neighboring facets are again
characterized by the sum and intersection conditions of Section~3, now for
right \(\ringO_D\)-lattices.  The geometric and algebraic lengths are
\[
l_G(\dot C)=1,\qquad l_A(\dot C)=\lambda_0.
\]

\subsection{Geometric and combinatorial geodesics}

\begin{proposition}\label{prop:division-geodesics}
For the building \(\scrB_D\), geometric \(k\)-geodesics and combinatorial
\(k\)-geodesics coincide in the sense of Theorem~\ref{twogeodesics}.  Hence
\(k\)-geodesics in \(\scrB_D\), together with their compatible pointings,
ordered types, and two length functions, are defined exactly as in
Sections~4--5.
\end{proposition}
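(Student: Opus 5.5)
We re-run the argument of Sections~4--5 for \(\scrB_D\), separating the steps that use only the affine Coxeter complex from those that use the lattice model over \(D\).

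\textbf{Apartment-level steps.} These concern geometric geodesics: Lemmas~\ref{opp-convex-sph} and~\ref{opp-convex-aff}, the standard model of Section~\ref{s:apt}, Theorem~\ref{thm:standard-geodesic}, Lemmas~\ref{lem:geo-3} and~\ref{lem:geo-4}, and Corollary~\ref{cor:geo-normal-form}. All of them take place inside one apartment with its CAT(0) metric. The only input from the ambient building is that any two facets lie in a common apartment and that apartments are convex. Both facts hold for every affine building; in particular they hold for \(\scrB_D\), which is of type \(\widetilde A_{n-1}\) with the same apartments as \(\scrB\). So these results transfer verbatim. The same applies to part~(i) of Theorem~\ref{twogeodesics}, since its proof uses only the normal form and the cyclic ordering of vertex types on a facet, and this ordering is unchanged.

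\textbf{Lattice-model steps.} Part~(ii) uses the lattice model, which we adapt. Fix a right \(D\)-basis \(u_1,\ldots,u_n\) of \(V\) and let \(L(x)=\bigoplus_j u_j\Pi^{x_j}\ringO_D\). This identifies the standard apartment \(A\) with the apartment of the diagonal torus \((D^\times)^n\), and the vertices \(w_i\) with the lattices \(L(\tilde w_i)=M_1\Pi^{y_1(w_i)}\oplus\cdots\oplus M_{k+1}\).

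Lemma~\ref{lem:half-stabilizers} then holds with \(\GL_\ringO\), \(\Hom_\ringO\) and \(\pi\Hom_\ringO\) replaced by \(\GL_{\ringO_D}\), \(\Hom_{\ringO_D}\) and \(\Pi\Hom_{\ringO_D}\), acting on the left of right modules. The same valuation-bound argument applies, using \(v_D\). Some care is needed because \(\Pi\) does not commute with \(\ringO_D\); this is harmless because \(\Pi\ringO_D=\ringO_D\Pi\).

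Lemmas~\ref{lem:add-predecessor} and~\ref{lem:add-successor} reduce, exactly as before, to the following fact. The parabolic subgroup of \(\GL_{\kappa_D}(\bar M_k\oplus\bar M_{k+1})\) stabilizing \(\bar M_{k+1}\) acts transitively on the complements of \(\bar M_{k+1}\), and block matrices over \(\kappa_D\) lift to the allowed \(\ringO_D\)-blocks. Both facts are linear algebra over the finite field \(\kappa_D\).

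The transitivity proposition then follows by the same induction on finite windows. For infinite windows we use the same compactness argument, since stabilizers of facets in \(G_D\) are compact open. The proof of part~(ii) is then word-for-word: every combinatorial geodesic is a translate of the standard one, whose underlying facets form a geometric geodesic by Theorem~\ref{thm:standard-geodesic}.

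\textbf{Main obstacle.} The delicate point is the identification of the intrinsic opposition condition in \(\operatorname{Lk}(C'')\) with the lattice conditions \(L_0\cap L_0'=L_1\) and \(L_0+L_0'=L_k\Pi^{-1}\). We check this through the link. The link is the flag complex of \(L_k\Pi^{-1}/L_1\) over \(\kappa_D\), and in such a projective-space flag complex opposition of two subspaces means they are complementary, which is exactly the stated condition. For \(\kappa_D\) commutative (indeed it is the finite field \(\mathbb F_{Q}\)), this is the standard description of \cite{AB}*{4.78}. Once this is verified, the successor relation in the lattice model is the intrinsic one, and the proposition follows.
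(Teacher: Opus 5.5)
Your proposal is correct and follows the same route as the paper. The apartment-level results of Section~4 transfer verbatim, and the Section~5 lattice arguments (block stabilizers, transitivity on complements via the parabolic over \(\kappa_D\), compactness for infinite windows) carry over because \(\mathfrak p_D\) is two-sided and every quotient between \(L\) and \(L\Pi\) is a \(\kappa_D\)-vector space. Your explicit check that opposition in the link means complementarity is a useful addition, since the paper only asserts this lattice characterization just before the proposition; more precisely, the relevant check takes place in the join factor given by the flag complex of \(L_k\Pi^{-1}/L_1\). The one ingredient you leave implicit is the base case of transitivity on pointed facets of fixed type, which needs the adapted-basis (elementary-divisor) theorem for right \(\ringO_D\)-lattices that the paper cites explicitly.
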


\begin{proof}
The arguments of Section~4 take place in apartments and use only the affine
Coxeter complex of type \(\widetilde A_{n-1}\), together with convexity and
opposition in spherical links.  They therefore apply verbatim to
\(\scrB_D\).

For Section~5, replace the elementary-divisor theorem over \(\ringO\) by its
version for right lattices over the noncommutative discrete valuation ring
\(\ringO_D\); see \cite{Sat}*{Section~8.1}.  Every quotient occurring between
\(L\) and \(L\Pi\) is a vector space over \(\kappa_D\).  The block conditions
still depend only on \(v_D\): the ideal \(\mathfrak p_D\) is two-sided, and
conjugation by \(D^\times\) preserves \(\ringO_D\).  Consequently, the block
stabilizer calculations and the transitivity on complements reduce to the
same linear algebra over \(\kappa_D\).  These are the only field-dependent
ingredients in the proof of Theorem~\ref{twogeodesics}.
\end{proof}

Now let \(\Gamma\) be a discrete type-preserving subgroup of \(G_D\) acting
freely on \(\scrB_D\).  Ordering the vertex types by
\(0<1<\cdots<n-1\) makes \(\scrB_D\) into a \(\Delta\)-complex, and we put
\[
\scrB_{D,\Gamma}=\Gamma\backslash(\scrB_D)_\Delta.
\]
The proof of Lemma~\ref{lem:quotient-links} is purely a covering argument, so
every link in \(\scrB_{D,\Gamma}\) is the corresponding spherical residue of
\(\scrB_D\).  Consequently the local successor digraph
\(\mathcal D_k(\scrB_{D,\Gamma})\), and hence \(k\)-geodesics on
\(\scrB_{D,\Gamma}\), are defined without any finiteness assumption.  If
\(\Gamma\) is cocompact, the complex is finite and the Euler products
\(Z_k(\scrB_{D,\Gamma},u)\) and \(Z_k^\epsilon(\scrB_{D,\Gamma},u)\) are
defined exactly as in Section~6.

\subsection{The relative spherical \texorpdfstring{\(L\)-function}{L-function}}

Let \(K_D\) be the stabilizer in \(G_D\) of the homothety class of a standard
lattice.  For \(1\le i\le n-1\), define the vertex adjacency operator
\(A_{i,D}\) by
\[
(A_{i,D}f)([L])
=
\sum_{\substack{L\supset L'\supset L\Pi\\
\dim_{\kappa_D}(L/L')=i}}
f([L']),
\]
and put \(A_{0,D}=A_{n,D}=I\).  These are the fundamental operators in the
spherical Hecke algebra of \((G_D,K_D)\); equivalently, \(A_{i,D}\) is the
operator attached to the double coset
\[
K_D\operatorname{diag}(\Pi I_i,I_{n-i})K_D.
\]

Let \(\rho\) be an irreducible \(K_D\)-spherical representation of \(G_D\).
Its space of \(K_D\)-fixed vectors is one-dimensional.  Satake's calculation
for \(\PGL_n(D)\) associates with \(\rho\) a relative Satake parameter
\[
s_D(\rho)=[\diag(\lambda_1,\ldots,\lambda_n)],
\qquad \lambda_1\cdots\lambda_n=1,
\]
well-defined up to permutation, for which \(A_{i,D}\) acts by
\begin{equation}\label{eq:division-satake}
Q^{i(n-i)/2}
\sum_{1\le m_1<\cdots<m_i\le n}
\lambda_{m_1}\cdots\lambda_{m_i}.
\end{equation}
See \cite{Sat}*{Section~8.4 and Appendix~I}.  Although \cite{Sat} is written
for \(p\)-adic number fields, the calculation uses only elementary-divisor
theory and the cardinality of the residue field, and therefore applies
unchanged to equal-characteristic local fields.  We define the relative
spherical \(L\)-factor by
\[
L_D(\rho,u)=\det(I-s_D(\rho)u)^{-1}.
\]
For a discrete cocompact subgroup \(\Gamma\), let
\[
L_D(\Gamma,u)=\prod_\rho L_D(\rho,u)^{m(\rho)},
\]
where the product runs through the irreducible spherical constituents of
\(L^2(\Gamma\backslash G_D)\), with sphericality taken relative to \(K_D\).
This product is finite because
\(L^2(\Gamma\backslash G_D/K_D)\) is finite-dimensional.

The adjective \emph{relative} is included to make the normalization clear.
When \(D=F\), this is exactly the unramified \(L\)-function of Section~6.7.
For \(d>1\), the algebraic group \(G_D\) is an inner form of
\(\PGL_{nd}\), whereas the building and its spherical Hecke polynomial have
relative rank \(n-1\) and degree \(n\).  The factor defined above is the
degree-\(n\) spherical factor naturally detected by the building.

Equation~\eqref{eq:division-satake} gives, exactly as in
Theorem~\ref{thm:Hecke-series},
\begin{equation}\label{eq:division-hecke-determinant}
L_D(\Gamma,Q^{(n-1)/2}u)
=
\det\!\left(
\sum_{i=0}^n(-Q^{(i-1)/2}u)^i A_{i,D}
\;\middle|\;
L^2(\Gamma\backslash G_D/K_D)
\right)^{-1}.
\end{equation}

\subsection{The division-algebra identity}

\begin{theorem}\label{thm:division-main}
Let \(D\) be a central division algebra over \(F\), let
\(Q=\#\kappa_D\), and let \(\scrB_D\) be the Bruhat--Tits building of
\(G_D=\PGL_n(D)\).  Suppose that \(\Gamma\) is a discrete torsion-free
cocompact subgroup of \(G_D\) that preserves vertex types, and put
\(\scrB_{D,\Gamma}=\Gamma\backslash\scrB_D\).  Then
\[
(1-u^n)^{\chi(\scrB_{D,\Gamma})}
L_D(\Gamma,Q^{(n-1)/2}u)
=
\prod_{k=1}^{n-1}
Z_k^\epsilon(\scrB_{D,\Gamma},u)^{(-1)^{k+1}}.
\]
In particular, Theorem~\ref{main} is the case \(D=F\).
\end{theorem}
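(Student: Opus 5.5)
The plan is to rerun the proof of Theorem~\ref{main} from Sections~7--8 verbatim, checking at each step that the only field-dependent input is linear algebra over the residue field. We replace \(\kappa\), \(\pi\), \(q\) by \(\kappa_D\), \(\Pi\), \(Q\), and \(F\)-lattices by right \(\ringO_D\)-lattices.

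First, the local combinatorics. By Proposition~\ref{prop:division-geodesics} and the covering argument of Lemmas~\ref{lem:quotient-links} and~\ref{lem:local-successor}, the successor digraphs \(\mathcal D_k(\scrB_{D,\Gamma})\) are defined. The lattice description of successors (sum and intersection conditions, Section~\ref{successor}) holds for right \(\ringO_D\)-lattices. Hence Theorem~\ref{thm:zeta-determinant} holds with the same proof, since that proof is purely a trace count on a finite digraph:
\[
Z_k^\epsilon(\scrB_{D,\Gamma},u)=\det\bigl(I-(-1)^{k+1}T_k\bigr)^{-1}.
\]
On the spectral side, equation~\eqref{eq:division-hecke-determinant} replaces Theorem~\ref{thm:Hecke-series}. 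Identifying vertices with \(G_D/K_D\), the operators \(A_{i,D}\) are the vertex adjacency operators \(\hat A_i\) with \(\dim_{\kappa_D}\) in place of \(\dim_\kappa\). This gives the analogue of Theorem~\ref{thm:Hecke-series-vertex} on \(C_0(\scrB_{D,\Gamma})\).

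Next, the cochain complex. We define \(d_i\), \(R_i\), \(\delta_i\), \(\Delta_i\), \(\Phi_i\), \(W_i\), \(Q_i\), \(J\) by the same formulas. The periodic extension uses \(a_{j+r+1}=a_j\Pi\), the exponents are \([a:b]=\dim_{\kappa_D}(a/b)\), and \(\mu\) is the Möbius function of the subspace lattice of a \(\kappa_D\)-vector space, \(\mu(m)=(-1)^mQ^{m(m-1)/2}\). All lattices between \(a_0\) and \(a_0\Pi\) correspond to \(\kappa_D\)-subspaces of \(a_0/a_0\Pi\), which is \(n\)-dimensional. Therefore the identities \(d\circ d=0\), Lemmas~\ref{lemma3}--\ref{main2}, and Theorem~\ref{QW} go through unchanged. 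Their proofs use only:
\begin{itemize}
\item Möbius inversion in that lattice;
\item the additivity \([a_0:b]+[b:a_0\Pi]=n\);
\item the fact that \(\Pi\) maps lattices bijectively while preserving indices, so that \(J\) is defined and the substitution \(b'=b\Pi\) in~\eqref{eq72} works.
\end{itemize}
The computation in Section~8.1 then gives
\[
\Phi_0=\sum_{i=0}^n(-Q^{(i-1)/2}u)^iA_{i,D}.
\]
The proof of the \(\Phi_i\) determinant for \(i\ge1\), including the trace comparison of \(\Sigma_i\) with \(T_i\) via the dimension-monotonicity argument, uses only \(\kappa_D\)-dimensions and \(\Gamma\)-invariance.

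Finally, the Euler--Poincaré step from Section~7 applies unchanged. \(\Phi_i\) is a cochain automorphism that acts on cohomology as \(1-u^n\), and \(\dim C_i=(i+1)V_i\). Combining these gives the identity with \(L_D(\Gamma,Q^{(n-1)/2}u)\) in degree \(0\).

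The main obstacle is the noncommutativity of \(\ringO_D\). We must check that the \(G_D\)-equivariance of \(d_i\) and \(\delta_i\) survives. We must also check that \(\Pi\) can be used as a right scalar consistently, so that \(L\Pi\) is a lattice, \(\Pi\) is compatible with homothety, and the chain \(a_{j+k+1}=a_j\Pi\) is well defined on homothety classes. Since \(\mathfrak p_D\) is two-sided, \(L\Pi=L\mathfrak p_D\) depends only on \(L\). Moreover \(G_D\) acts on the left, so it commutes with right multiplication. I would verify these points first, and then note that \(F^\times\) acts trivially on homothety classes.
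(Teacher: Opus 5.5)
Your proposal is correct and follows essentially the same route as the paper. The paper's proof likewise reduces everything to three inputs: Proposition~\ref{prop:division-geodesics} (for the successor operators and zeta determinants), the M\"obius function $\mu(r)=(-1)^rQ^{r(r-1)/2}$ of the subspace lattice of the $n$-dimensional $\kappa_D$-space $L/L\Pi$ (so that Sections~7--8 go through with $q$ replaced by $Q$), and equation~\eqref{eq:division-hecke-determinant}; the alternating-determinant argument is then formal. Your explicit checks of the noncommutative bookkeeping---two-sidedness of $\mathfrak p_D$, right scalars commuting with the left $G_D$-action, and $\Pi$ preserving indices in the substitution of~\eqref{eq72}---are left implicit in the paper but do not change the argument.
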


\begin{proof}
By Proposition~\ref{prop:division-geodesics} and the discussion following it,
all geodesic zeta functions and successor operators used in Sections~6--8 are
available for \(\scrB_{D,\Gamma}\).  It remains only to check the two numerical
inputs in the determinant calculation.

First, every interval
\[
L\supseteq L'\supseteq L\Pi
\]
is the lattice of subspaces of the \(n\)-dimensional \(\kappa_D\)-vector
space \(L/L\Pi\).  Its M\"obius function is therefore
\[
\mu(r)=(-1)^rQ^{r(r-1)/2}.
\]
Thus the proof of Theorem~\ref{thm:Phi-determinants}, including all the
sum-and-intersection identities in Section~8, is unchanged after replacing
\(q\) by \(Q\).  It gives
\[
\det(\Phi_0)
=
\det\!\left(\sum_{i=0}^n(-Q^{(i-1)/2}u)^iA_{i,D}\right)
\]
and, for \(i\ge1\),
\[
\det(\Phi_i)
=(1-u^n)^{iV_i}\det\!\left(I-(-1)^{i+1}T_i\right).
\]

Second, Equation~\eqref{eq:division-hecke-determinant} identifies the
degree-zero determinant with the reciprocal of the relative spherical factor,
\[
L_D(\Gamma,Q^{(n-1)/2}u)^{-1}.
\]
The alternating determinant argument completing the proof of
Theorem~\ref{main} is formal and depends only on the finite cochain
complex of pointed facets.  Applying it to \(\scrB_{D,\Gamma}\) proves the
displayed identity.
\end{proof}

\section*{Acknowledgments}
The authors acknowledge the use of ChatGPT for editorial assistance in improving
the exposition and organization of this manuscript.

\end{document}